\theoremstyle{plain} % style plain
\newtheorem{thm}{Theorem}[section]
\newtheorem{prop}[thm]{Proposition}
\newtheorem{lem}[thm]{Lemma}
\theoremstyle{definition} % style definition
\newtheorem{definition}[thm]{Definition}
\newtheorem{remarque}[thm]{Remark}
\newtheorem{notation}[thm]{Notation}
\newcommand{\actionG}{\underline{\delta}}
\newcommand{\brep}{\alpha}
\newcommand{\C}{\mathbbm{C}}
\newcommand{\CAlg}{\mathcal{O}}
\newcommand{\CatInd}[1][\brep]{\mathcal{T}_{#1}}
\newcommand{\ChainGp}{\mathfrak{C}}
\newcommand{\CQG}{\mathbbm{G}}
\newcommand{\E}{\mathbbm{E}}
\newcommand{\Falg}[1][\alpha]{{\mathcal{O}^{#1}}}
\newcommand{\Hh}{\mathscr{H}}
\newcommand{\HilbK}{\mathscr{K}}
\newcommand{\inj}{\hookrightarrow}
\newcommand{\InvJ}{\mathcal{F}}
\newcommand{\LieAlgSU}{\mathfrak{su}}
\newcommand{\N}{\mathbbm{N}}
\newcommand{\OpCpct}{\mathcal{K}}
\newcommand{\OpLin}{\mathcal{L}}
\newcommand{\R}{\mathbbm{R}}
\newcommand{\Toep}{\mathcal{T}}
\newcommand{\TrivRep}{\epsilon}
\newcommand{\UCTclass}{\mathscr{N}}
\newcommand{\Z}{\mathbbm{Z}}
\DeclareMathOperator{\coker}{Coker}
\DeclareMathOperator{\id}{Id}
\DeclareMathOperator{\Span}{Span}
\renewcommand{\colon}{\mathclose{}\mathpunct{}\mathpunct{:}}
\numberwithin{equation}{section}
\title{Fixed~points of compact~quantum~groups actions on~Cuntz~algebras}
\author{Olivier \textsc{Gabriel}}
\begin{document}
\maketitle

\abstract{
Given an action of a Compact Quantum Group (CQG) on a finite dimensional Hilbert space, we can construct an action on the associated Cuntz algebra. We study the fixed point algebra of this action, using Kirchberg classification results and Pimsner algebras. Under certain conditions, we prove that the fixed point algebra is purely infinite and nuclear. We further identify it as a Pimsner algebra, compute its $K$-theory and prove a ``stability property'': the fixed points only depend on the CQG \textsl{via} its fusion rules. We apply the theory to $SU_q(N)$ and illustrate by explicit computations for $SU_q(2)$ and $SU_q(3)$. This construction provides examples of free actions of CQG (or ``principal noncommutative bundles'').
}

\medbreak

\textsl{Mathematics Subject Classification (2010).} 46L80, 19K99, % ($K$-theory and operator algebras), 
81R50% (Quantum groups and related algebraic methods).

\textsl{Keywords.} $K$-theory, purely infinite $C^*$-algebra, Kirchberg algebra, Pimsner algebra, compact quantum group, fusion rules, free actions.

\tableofcontents

\section{Introduction}

Our original motivation for this article was to find an explicit construction of free actions of Compact Quantum Groups (CQG -- see Woronowicz' articles \cite{TwistedSU2Woronowicz,CpctMPseudoGpWoronowicz,CpctQGpWoronowicz}) in the sense of Ellwood's work \cite{PpalActionEllwood} (see also the recent article \cite{FreeActCAlgCY} by De Commer and Yamashita). We rely on a construction originally given by Konishi, Nagisa and Watatani \cite{ActCMQGKNW}. Starting with a unitary representation $\brep $ of dimension $d$ of a CQG $\CQG $, we obtain an action of $\CQG $ on the Cuntz algebra \cite{CuntzAlg} $\CAlg_d$.

Another motivation for the study of such fixed point algebras is provided by the duality results of Doplicher and Roberts \cite{CuntzAlgDR,EndomDR} for actions of (ordinary) compact groups on $C^*$-algebras and the considerable amount of work they generated (see for instance \cite{HilbertCSystBaumgLledo,DualThmPinzariRoberts,BraidedCatPinzariRoberts}).

The previous construction was studied by Marciniak in \cite{CpctQGpMarciniak}, in the case of the irreducible representation $\brep $ of $\CQG = SU_q(2)$ on $\C^2$. He gave an explicit description of the fixed point algebra by means of generators and relations. The general case of the natural irreducible representation of $\CQG = SU_q(d)$ on $\C^d$ was treated by Paolucci in her article \cite{CoactionCuntzAlgPaolucci}. She gave an explicit family of generators based on the infinite braid group $B_\infty $. She further expanded her results in joint work \cite{QGpActCPZ} with Carey and Zhang. Working in the algebraic setting, they gave the same kind of description in the case of classical quantum groups $\CQG = SU_q(d)$, $SO_q(d)$ or $SP_q(d)$. Moreover, they managed to recover the ``$q$'' in $(0,1)$ of $\CQG $ from the fixed point algebra.

We take a rather different approach to this problem. Indeed, we stay strictly in the $C^*$-algebraic domain and try to obtain an abstract $C^*$-isomorphism. For this abstract identification, we depend on the remarkable classification results by Kirchberg and Phillips (see \cite{ClassThmKirchberg,ClassThmKirchbergKP}) allowing recovery a $C^*$-isomorphism from $K$-theoretic properties. In this article, our reference regarding classification theory is the book \cite{RordamStormer} by R\o{}rdam and St\o{}rmer. 

To perform the required $K$-theoretic computations, we use the notion of Pimsner algebra (also called Cuntz-Pimsner algebra) which first appeared in \cite{PiCrG}. A very similar notion was introduced independently by Abadie, Eilers and Exel in \cite{AbadieEE}. In our setting both definitions coincide. If the version by Pimsner is more widely known, our approach is somewhat closer to the second definition. Our exposition is also influenced by the work \cite{Katsura04} of Katsura -- on which we rely for nuclearity.

\smallbreak

Our article includes a proof that a certain Pimsner algebra is purely infinite. It is therefore akin to \cite{SimpleCAlgPinzari,SimpAlgCP,QfreeAutZacharias,KumjianPiCrG} as we now discuss. 

If the results of Pinzari in Theorem 3.12 of \cite{SimpleCAlgPinzari} are very similar to ours, her setting of regular representations of (ordinary) compact groups is disjoint from ours. In a subsequent joint work with Kajiwara and Watatani, she provided a sufficient condition for purely infiniteness (see \cite{SimpAlgCP} Theorem 6.1). However, this criterion only applies to the case of purely infinite coefficient algebras and our coefficients are Approximately Finite dimensional (AF-) algebras (see our Lemma \ref{Lem:AlgApprox}), thus their theory does not apply to our case.

Zacharias provides another criterion for purely infiniteness before applying it to crossed products by $\R$. We were not able to apply Proposition 2.2 of \cite{QfreeAutZacharias} to our situation, despite similarities between both arguments. For further discussion, see our Remark \ref{Rk:Zacharias}. 

Finally, Kumjian also considered purely infinite Pimsner algebras in Theorem 2.8 of \cite{KumjianPiCrG}. In his setting, the natural embedding of the coefficient algebra inside the Cuntz algebra induces a $KK$-equivalence, thus the $K$-theories of both algebras should be isomorphic. The explicit example of the natural representation of $\CQG =SU_q(2)$ (see our Subsection \ref{SSec:Example}) shows that this need not be the case in our setting.

\smallbreak

In the course of this article, we resort to previous work by Banica \cite{FusionRulesBanica} to define $R^+$-isomorphic CQG and by Wassermann \cite{TheseWassermann} for the computation of $K$-theory.

\medbreak

The main results of this article come from two directions: 
\begin{itemize}
\item
Under some hypotheses (\textsl{e.g.} $\CQG $ semisimple compact Lie group and $\brep $ sum of two irreducible representations), we can prove that the fixed point algebra $\Falg $ is simple and purely infinite (Theorem \ref{Thm:PI1});
\item
If we can find $N$ large enough for the tensor representation $\brep ^N$ to be included in $\brep ^{N+1}$, then $\Falg $ is a Pimsner algebra (Proposition \ref{Prop:PimsnerAlg}) and we can compute its $K$-theory (Theorem \ref{Thm:KthFalg}).
\end{itemize}
Combining these two threads of results, we can prove an unexpected ``stability theorem'' (Theorem \ref{Thm:Main}) namely that as a $C^*$-algebra, the fixed point algebra $\Falg $ only depends on $\CQG $ \textsl{via} its fusion rules. We show that these results apply to the natural representation of $SU_q(N)$ and discuss two explicit examples, the cases of $\CQG = SU_q(2)$ and $\CQG = SU_q(3)$. In this setting, this ``stability result'' for $C^*$-algebras contrasts sharply with the algebraic case treated in \cite{QGpActCPZ}, as we discuss in Remark \ref{Rem:CPZ}. Finally, we prove that this construction indeed yields free actions of CQG.

\medbreak

After reviewing the construction of the fixed point algebra $\Falg $ and stating properly the hypotheses of our theorems (Section \ref{Sec:Falg}), we prove the simplicity and purely infiniteness properties in Section \ref{Sec:SimplePI}. We provide a brief reminder regarding Pimsner algebras, before proceeding with the computation of $K$-theory (Section \ref{Sec:CompKth}). The next section \ref{Sec:Main} focuses on the combination of both parts to obtain the main results. We then discuss our hypotheses, show how to apply our theory to $SU(N)$ and study two explicit examples (Section \ref{Sec:Discussion}). Going back to our original motivation, we conclude in Section \ref{Sec:Final} with sufficient conditions to get a free action and a comparison with the algebraic case.

\section{Fixed points: review and conditions}
\label{Sec:Falg}

In this article, all tensor products of $C^*$-algebras are \emph{minimal} tensor products. We consider a Compact Quantum Group (CQG) denoted $\CQG$, \textsl{i.e.} a separable unital $C^*$-algebra $C(\CQG)$ together with a unital $*$-algebra homomorphism $\Delta : C(\CQG) \to C(\CQG) \otimes C(\CQG)$ which satisfies coassociativity and cancellation properties -- for more details on these objects and their representations, see \cite{CpctMPseudoGpWoronowicz,CpctQGpWoronowicz}. 

We also fix a \emph{unitary representation} $\brep$ of $\CQG $ on a Hilbert space $\Hh$ of finite dimension $d$ \textsl{i.e.} a $C(\CQG )$-valued $d \times d$ matrix $(\brep_{ij}) \in M_d(C(\CQG ))$ satisfying a coassociativity property as well as equations $\sum_{k} \brep _{k i}^* \brep _{kj} = \delta_{ij}$ and $\sum_{k} \brep _{i k} \brep _{j k}^* = \delta_{ij}$. In particular, for any $\CQG $, we have a trivial representation denoted $\TrivRep$, defined by the unit of $C(\CQG )$. In the rest of this article, we only use unitary representations which we therefore simply call ``representations''. 

If $(e_i)$ is an orthonormal basis of $\Hh$, we define the associated \emph{action} of $\CQG $ on $\Hh$ as the map $\delta_\alpha \colon \Hh \to \Hh \otimes C(\CQG )$ given by $\delta_\alpha(e_i) = \sum_{j} e_j \otimes \alpha_{ji}$. Note that our conventions differ from those of \cite{CoactionCuntzAlgPaolucci,FusionRulesBanica}.

Using this definition of representations for CQG, Woronowicz introduced several notions which are very similar to the case of compact groups. In particular, given two representations $\alpha \in M_{d_1}(C(\CQG ))$ and $\beta \in M_{d_2}(C(\CQG ))$, a linear map $T $ between $\Hh_1$ and $\Hh_2$ is an \emph{intertwiner} (see Proposition 2.1 p.629 of \cite{CpctMPseudoGpWoronowicz}) if
$$ (T \otimes 1) \delta_\alpha = \delta_\beta T. $$
He also defined \emph{irreducible representations} and proved a \emph{Schur's lemma}. Given two unitary representations $\pi_1$ and $\pi_2$ of $\CQG $ on Hilbert spaces $\Hh_1$ and $\Hh_2$ respectively, he moreover constructed a unitary tensor product representation $\pi_1 \otimes \pi_2$ on $\Hh_1 \otimes \Hh_2$ -- see Section 2 of \cite{CpctMPseudoGpWoronowicz}. Furthermore, Woronowicz defined and proved the existence of \emph{Haar measures} for general CQG -- see \cite{CpctQGpWoronowicz} Theorem 1.3.

We denote by $R^+(\CQG )$ the \emph{fusion semiring} of (equivalence classes of) finite dimensional representations of $\CQG $ equipped with addition and tensor product (compare Def. 1.2, \cite{FusionRulesBanica}). The following is an avatar of Definition 2.1 in \cite{FusionRulesBanica}:
\begin{definition}
\label{Def:IsomRepCat}
Given two CQG $\CQG_1$ and $\CQG_2$, we say that they are \emph{$R^+$-isomorphic} if an isomorphism of semirings $ R^+(\CQG _1) \simeq R^+(\CQG_2)$ exists.
\end{definition}
In other words, $\CQG _1$ and $\CQG _2$ have the same \emph{fusion rules}, \textsl{i.e.} there is a bijection $\Phi$ between their irreducible representations and it is compatible with the tensor product. As an example, the $q$-deformations $SU_q(N)$ are $R^+$-isomorphic to the original $SU(N)$ (see \cite{FusionRulesBanica}, Theorem 2.1 and references therein).

\medbreak

The Cuntz algebra $\CAlg_d$ is defined \cite{CuntzAlg} as the universal unital $C^*$-algebra generated by $d$ elements $(S_j)_{1 \leqslant j \leqslant d}$ which satisfy $S_i^* S_j = \delta_{ij}$ and $\sum_{j} S_j S_j^* = 1$. If $(e_i)$ is an orthonormal basis of a Hilbert space $\Hh$ of dimension $d$, we define an injective linear map $\varphi \colon \Hh \to \CAlg_d$ by $e_i \mapsto S_i$. Indeed, this map preserves the scalar product in the sense that if $v, w \in \Hh$, then 
$$ \langle v, w \rangle = \varphi(v)^* \varphi(w) \in \C 1  \subseteq \CAlg_d .$$
Following \cite{CpctQGpMarciniak}, we extend this map to iterated tensor products and set
$$ \Hh^\ell (\Hh^*)^k := \Span \{ S_{i_1} \cdots S_{i_\ell} S^*_{j_1} \cdots S_{j_k}^*| 1 \leqslant i_1, \ldots , i_\ell, j_1, \ldots , j_k \leqslant d \} \subseteq \CAlg_d .$$
Taking $(\ell,k) = (1,0)$ (resp. $(\ell, k) = (0,1)$) we recover $\Hh$ (resp. $\Hh^*$). These spaces satisfy $\Hh^* \Hh \subseteq \C 1 \subseteq \CAlg_d$. Thus, we can identify $\Hh^\ell (\Hh^*)^k$ with the linear operators from $\Hh^k$ to $\Hh^{\ell}$. Moreover, an inclusion $\Hh^\ell (\Hh^*)^k \inj \Hh^{\ell+1} (\Hh^*)^{k+1}$ is defined by 
$$
S_{i_1} \cdots S_{i_\ell} S^*_{j_1} \cdots S_{j_k}^* \mapsto \sum_{p} S_{i_1} \cdots S_{i_\ell} S_p S_p^* S^*_{j_1} \cdots S_{j_k}^*.
$$
We make extensive use of these identifications. We denote by $\CAlg_d^{\text{alg}}$ the algebraic unital $*$-algebra generated by $S_j$. $T \in \CAlg_d^{\text{alg}}$ is called an \emph{algebraic element}.

\smallbreak

The Cuntz algebra also possesses a well-known \emph{canonical endomorphism} $\rho \colon \CAlg_d \to \CAlg_d$ defined by:
$$ \rho(T) = \sum_{j=1}^d S_j T S_j^* ,$$
such that for any $S \in \Hh$ and any $T \in \CAlg_d$,
$ 
S  T = \rho(T) S
$.
Consequently, given $T \in \CAlg_d$ and $T' \in \CAlg_d \cap  \Hh^{\ell} (\Hh^*)^k$, setting $\mu = \max \{ \ell, k\}$ and $n = \ell - k$, we get that for any $\Lambda \geqslant \mu$,
\begin{equation}
\label{Eqn:CommRho}
 T' \rho^\Lambda(T) = \rho^{\Lambda +n}(T) T' .
\end{equation}
A pointwise continuous $S^1$-action $\gamma$ on a $C^*$-algebra $B$ is called a \emph{gauge action}. It yields \emph{spectral subspaces} $B^{(k)}$ defined by:
$$ B^{(k)} := \{T \in B | \forall z \in S^1 \subseteq \C, \gamma_z(T) = z^k T \} .$$ 
$T \in B^{(k)}$ is called a \emph{gauge-homogeneous} element of \emph{total gauge} $k$.

In the case of $\CAlg_d$, such a gauge action is defined on generators by $\gamma_z(S_j) = z S_j$, yielding spaces $\CAlg_d^{(k)}$. We use the short hand notation $\InvJ := \CAlg_d^{(0)}$ and call the elements of this set \emph{gauge-invariant}. This algebra contains $\InvJ^\ell := \Hh^\ell (\Hh^*)^\ell$, which can be identified with the matrices $M_{d^\ell}(\C)$ by the previous discussion.

\medbreak

An action $\actionG$ of a CQG on a $C^*$-algebra $B$ is required to be compatible with the product and the involution $*$ of $B$:
\begin{align*}
\actionG(T_1 T_2) &= \actionG(T_1) \actionG(T_2)
&
\actionG(T)^* &= \actionG(T^*).
\end{align*}
(compare Def. 1, \cite{CoactionCuntzAlgPaolucci}). The following first appeared as Theorem 1 in \cite{ActCMQGKNW}:
\begin{thm}
For a given representation $\brep $ of $\CQG $ on a Hilbert space $\Hh$ of dimension $d$, there is an action $\actionG \colon \CAlg_d \to \CAlg_d \otimes C(\CQG )$ of $\CQG $ on $\CAlg_d$ defined by
$$ \actionG(S_i) = \sum_{j=1}^d S_j \otimes \brep _{ji} .$$
\end{thm}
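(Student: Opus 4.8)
The plan is to build $\actionG$ from the universal property of $\CAlg_d$ and then check the defining properties of an action directly on the generators. First I would introduce $T_i := \sum_{j=1}^d S_j \otimes \brep_{ji} \in \CAlg_d \otimes C(\CQG)$ and verify that the $d$ elements $T_i$ satisfy the Cuntz relations inside $\CAlg_d \otimes C(\CQG)$. For the isometry relation,
\[
T_i^* T_j \;=\; \sum_{k,l} S_k^* S_l \otimes \brep_{ki}^* \brep_{lj} \;=\; \sum_k 1 \otimes \brep_{ki}^* \brep_{kj} \;=\; \delta_{ij}\,(1\otimes 1),
\]
using $S_k^* S_l = \delta_{kl}\,1$ and the unitarity relation $\sum_k \brep_{ki}^*\brep_{kj}=\delta_{ij}$; for the range relation,
\[
\sum_{i} T_i T_i^* \;=\; \sum_{i,k,l} S_k S_l^* \otimes \brep_{ki}\brep_{li}^* \;=\; \sum_{k,l} S_k S_l^* \otimes \delta_{kl}\,1 \;=\; \sum_k S_k S_k^* \otimes 1 \;=\; 1\otimes 1,
\]
using $\sum_j S_j S_j^* = 1$ and the other unitarity relation $\sum_i \brep_{ki}\brep_{li}^*=\delta_{kl}$. (It is worth recording that the two Cuntz relations for the $T_i$ reflect precisely the two unitarity relations built into the definition of $\brep$.) The universal property of $\CAlg_d$ then supplies a unique unital $*$-homomorphism $\actionG\colon \CAlg_d \to \CAlg_d\otimes C(\CQG)$ with $\actionG(S_i)=T_i$, and, being a $*$-homomorphism, it automatically satisfies $\actionG(T_1T_2)=\actionG(T_1)\actionG(T_2)$ and $\actionG(T^*)=\actionG(T)^*$, as required of an action of $\CQG$ on a $C^*$-algebra in the sense recalled above.

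Next I would confirm coassociativity, $(\actionG\otimes\id)\circ\actionG=(\id\otimes\Delta)\circ\actionG$ as maps $\CAlg_d\to\CAlg_d\otimes C(\CQG)\otimes C(\CQG)$. Since both composites are $*$-homomorphisms, it suffices to compare them on the generators $S_i$: on one side $(\actionG\otimes\id)\actionG(S_i)=\sum_{j}\actionG(S_j)\otimes\brep_{ji}=\sum_{j,k}S_k\otimes\brep_{kj}\otimes\brep_{ji}$, while on the other, using that $\brep$ is a representation, i.e.\ $\Delta(\brep_{ji})=\sum_k\brep_{jk}\otimes\brep_{ki}$, one finds $(\id\otimes\Delta)\actionG(S_i)=\sum_{j,k}S_j\otimes\brep_{jk}\otimes\brep_{ki}$, and the two agree after a relabelling of summation indices. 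If the notion of action in play also requires the nondegeneracy (Podle\'s) condition $\overline{\Span}\,\actionG(\CAlg_d)(1\otimes C(\CQG))=\CAlg_d\otimes C(\CQG)$, I would derive it from the orthogonality relations for $\brep$ together with their transposes (valid because $\brep$ is a unitary matrix over $C(\CQG)$, so its entrywise conjugate $\bar\brep=(\brep_{ij}^*)$ is invertible with inverse $\brep^{\mathrm T}$, giving $\sum_j\brep_{qj}^*\brep_{rj}=\delta_{qr}$): one checks $\sum_i\actionG(S_i)(1\otimes\brep_{ki}^*)=S_k\otimes 1$ and $\sum_j\actionG(S_j^*)(1\otimes\brep_{kj})=S_k^*\otimes 1$, and iterating puts every Cuntz monomial $S_{i_1}\cdots S_{i_\ell}S_{j_1}^*\cdots S_{j_k}^*\otimes 1$ in the left-hand side; as the latter is a closed right $(1\otimes C(\CQG))$-module, it exhausts $\CAlg_d\otimes C(\CQG)$.

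I do not expect a real obstacle: the whole content is the two-line verification of the Cuntz relations, after which the universal property does the work. The only points demanding a moment's attention are matching each Cuntz relation with the correct unitarity relation for $\brep$ (the two are dual to one another) and, in the nondegeneracy step, remembering that $\bar\brep$ need not be unitary, so the ``negative-word'' identities must be run through the transposed orthogonality relations rather than the ones literally written in the definition. Everything beyond this is routine index bookkeeping.
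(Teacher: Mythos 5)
The paper offers no proof of this theorem---it is quoted verbatim from Theorem 1 of Konishi--Nagisa--Watatani---so the comparison is with the standard argument, which is exactly your first paragraph: the $T_i$ satisfy the Cuntz relations (each of the two relations pairing off with one of the two unitarity identities for $\brep$), the universal property of $\CAlg_d$ produces the $*$-homomorphism, and coassociativity is checked on generators using $\Delta(\brep_{ji})=\sum_k \brep_{jk}\otimes\brep_{ki}$. That part is correct and complete for the notion of action the paper actually uses (compatibility with product and involution, plus coassociativity), so the theorem as stated is proved.

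Your supplementary paragraph on the Podle\'s density condition, however, rests on a false identity. You claim $\bar\brep=(\brep_{ij}^*)$ is invertible with inverse $\brep^{\mathrm T}$, i.e.\ $\sum_j\brep_{qj}^*\brep_{rj}=\delta_{qr}$. Since $(\bar\brep)^*=\brep^{\mathrm T}$, this is precisely the assertion that $\bar\brep$ is unitary---the very thing you rightly warn need not hold. It genuinely fails for compact quantum groups: for the fundamental representation of $SU_q(2)$ one computes $\sum_j\brep_{1j}^*\brep_{1j}=\alpha^*\alpha+q^2\gamma^*\gamma=1+(q^2-1)\gamma^*\gamma\neq 1$. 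Hence $\sum_j\actionG(S_j^*)(1\otimes\brep_{kj})\neq S_k^*\otimes 1$ in general; the negative-word step would have to be rerun using the invertibility of $\bar\brep$ (its inverse is $F\brep^{\mathrm T}F^{-1}$ for a positive intertwiner $F$), not the transposed orthogonality relation as written. This does not affect the theorem itself, since neither its statement nor the paper's definition of an action requires the Podle\'s condition---and note that Ellwood freeness, which the paper establishes only later and under extra hypotheses, is a strictly stronger property that your density argument should not be read as implying.
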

By construction, $\actionG$ preserves the spaces $\Hh^\ell (\Hh^*)^k$. It also induces actions $\delta_k$ of $\CQG $ on $\Hh^k$. The associated representations are called $\brep ^k$ -- indeed, they are the iterated tensor products of $\brep $.

\smallbreak

We now introduce our main object of interest:
\begin{definition}
The fixed point algebra $\Falg$ for the action $\actionG$ is defined as:
$$ \Falg := \{ T \in \CAlg_d : \actionG(T) = T \otimes 1 \} \subseteq \CAlg_d .$$
\end{definition}
In particular, if $\alpha$ is the trivial representation on $\C^d$, given by $\alpha_{ij} = \delta_{ij}$, then we recover $\Falg = \CAlg_d$.

\smallbreak

It is readily checked \cite{CoactionCuntzAlgPaolucci} that the gauge action $\gamma$ on $\CAlg_d$ ``commutes'' with the coaction of $\CQG $ on $\CAlg_d$, therefore $\Falg $ also carries a gauge action and we set $\InvJ^\brep := \Falg  \cap \InvJ$. We use the notation $\InvJ^{\brep, \ell} := \Falg \cap \Hh^{\ell} (\Hh^*)^{\ell}$. The following already appeared as Proposition 3.4 of \cite{CpctQGpMarciniak}: 
\begin{lem}
\label{Lem:InterpEntrelac}
The elements of $\Falg \cap \Hh^\ell (\Hh^*)^k$ are precisely the intertwiners of the representations $\brep^\ell$ and $\brep^k$.
\end{lem}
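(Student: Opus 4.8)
The plan is to turn the fixed-point equation $\actionG(T) = T \otimes 1$ into the intertwining relation $(T\otimes 1)\,\delta_k = \delta_\ell\, T$ by pairing $T$ against the vectors of $\Hh^k$ sitting inside $\CAlg_d$. I will use freely the two identifications set up above: on the one hand an element $T \in \Hh^\ell(\Hh^*)^k$ \emph{is} a linear operator $\Hh^k \to \Hh^\ell$, so that for $w \in \Hh^k$ the product $Tw$ formed in $\CAlg_d$ lies in $\Hh^\ell$ and equals the image of $w$ under that operator (here $(\Hh^*)^k\Hh^k \subseteq \C 1$ is what is used); on the other hand the restriction of $\actionG$ to $\Hh^m$ coincides with the action $\delta_m$ of $\CQG$ on $\Hh^m$, which one reads off from $\actionG(S_{i_1}\cdots S_{i_m}) = \sum_{j_1,\dots,j_m} S_{j_1}\cdots S_{j_m} \otimes \brep_{j_1 i_1}\cdots \brep_{j_m i_m}$.

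For the first inclusion, let $T \in \Falg \cap \Hh^\ell(\Hh^*)^k$ and $w \in \Hh^k$. Because $\actionG$ is multiplicative, $Tw$ lies in $\Hh^\ell$, and $\actionG$ restricts to $\delta_\ell$ on $\Hh^\ell$ and to $\delta_k$ on $\Hh^k$, we get
$$ (\delta_\ell\, T)(w) \;=\; \delta_\ell(Tw) \;=\; \actionG(Tw) \;=\; \actionG(T)\,\actionG(w) \;=\; (T\otimes 1)\,\delta_k(w). $$
Since $w \in \Hh^k$ was arbitrary, this reads $(T\otimes 1)\,\delta_k = \delta_\ell\, T$, i.e.\ $T$, viewed as a map $\Hh^k \to \Hh^\ell$, is an intertwiner of $\brep^\ell$ and $\brep^k$. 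Conversely, let $T \in \Hh^\ell(\Hh^*)^k$ satisfy $(T\otimes 1)\,\delta_k = \delta_\ell\, T$. Running the same chain of equalities for $w \in \Hh^k$ gives
$$ \actionG(T)\,\actionG(w) \;=\; \actionG(Tw) \;=\; \delta_\ell(Tw) \;=\; (\delta_\ell\, T)(w) \;=\; (T\otimes 1)\,\delta_k(w) \;=\; (T\otimes 1)\,\actionG(w), $$
so $\bigl(\actionG(T) - T\otimes 1\bigr)\,\actionG(w) = 0$ for every $w \in \Hh^k$. Writing $S_J := S_{j_1}\cdots S_{j_k}$ for a multi-index $J$ of length $k$, applying this at $w = S_J$, multiplying on the right by $\actionG(S_J)^*$ and summing over $J$, and using $\sum_{|J|=k} \actionG(S_J)\,\actionG(S_J)^* = \actionG\bigl(\sum_{|J|=k} S_J S_J^*\bigr) = \actionG(1) = 1\otimes 1$, we obtain $\actionG(T) - T\otimes 1 = 0$, that is $T \in \Falg$.

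I do not expect a genuine obstacle here: once the dictionary between products in $\CAlg_d$ and the operations on the spaces $\Hh^m$ is in place, the statement is bookkeeping. The two points that deserve a little attention are the compatibility of $\actionG|_{\Hh^m} = \delta_m$ with the conventions fixed above for $\delta_\alpha$ and for intertwiners — so that the intertwining relation comes out on the correct side — and the elementary ``partition of unity'' identity $\sum_{|J|=k} S_J S_J^* = 1$ in $\CAlg_d$, which is exactly what forces $\actionG(T) = T\otimes 1$ in the converse.
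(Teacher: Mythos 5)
Your argument is correct: the identification of $\actionG|_{\Hh^m}$ with $\delta_m$ and of left multiplication with the operator action are exactly the dictionaries the paper sets up, and the ``partition of unity'' $\sum_{|J|=k} S_J S_J^* = 1$ cleanly upgrades the pointwise identity $(\actionG(T)-T\otimes 1)\actionG(S_J)=0$ to $\actionG(T)=T\otimes 1$. The paper does not prove this lemma itself (it cites Proposition~3.4 of Marciniak), but your proof is the standard argument one would expect there, so there is nothing further to compare.
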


We adapt Lemma 5 of \cite{ActCMQGKNW}:
\begin{definition}
If $\CQG $ is a compact quantum group, the \emph{conditional expectation $\E_\CQG $} associated to the action $\brep $ is:
$$ \E_\CQG (T) = (\id_{\CAlg_d} \otimes h)\brep (T) ,$$
where $h$ is the Haar measure on $\CQG $.
\end{definition}
$\E_\CQG $ defines a projection of norm $1$ from $\CAlg_d$ to $\Falg $ (see \cite{CpctQGpMarciniak} p.611). It sends algebraic elements to algebraic elements and $\Hh^{\ell} (\Hh^*)^{\ell}$ to $\Hh^{\ell} (\Hh^*)^{\ell} \cap \Falg $.

We refer to the book \cite{RordamStormer} (Def. 1.2.1) %ppdf 22
for a definition of Approximately Finite dimensional algebras or AF-algebras. The conditional expectation $ \E_\CQG $ is used in the proof of the following Lemma 6 of \cite{ActCMQGKNW}:
\begin{lem}
\label{Lem:AlgApprox}
If $\brep $ is a representation of the CQG $\CQG $, then the algebraic elements of $\Falg $ are dense in $\Falg $. Moreover,
\begin{itemize}
\item
any positive $T \in \Falg $ can be approximated by a positive and algebraic $T_\brep  \in \Falg $;
\item
 $\InvJ^\brep$ is an AF-algebra and $\InvJ^\brep =\lim_{\to } \InvJ^{\brep , \ell}$.
\end{itemize}
\end{lem}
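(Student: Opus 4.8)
The plan is to funnel everything through two commuting norm-one projections: the conditional expectation $\E_\CQG\colon\CAlg_d\to\Falg$ of the preceding definition, which is the identity on $\Falg$ and sends algebraic elements to algebraic elements, and its gauge counterpart $\Phi_0(T):=\int_{S^1}\gamma_z(T)\,dz$, the norm-one projection of $\CAlg_d$ onto $\InvJ=\CAlg_d^{(0)}$; this $\Phi_0$ is the identity on $\InvJ$ and also preserves algebraic elements, since on a finite combination of words $S_{i_1}\cdots S_{i_\ell}S_{j_1}^*\cdots S_{j_k}^*$ it merely discards the terms with $\ell\neq k$.

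First I would prove density of the algebraic elements of $\Falg$. Since $\CAlg_d^{\text{alg}}$ is dense in $\CAlg_d$, given $T\in\Falg$ and $\varepsilon>0$ I pick $A\in\CAlg_d^{\text{alg}}$ with $\|T-A\|<\varepsilon$; then $\E_\CQG(A)$ is algebraic, lies in $\Falg$, and $\|T-\E_\CQG(A)\|=\|\E_\CQG(T-A)\|\le\|T-A\|<\varepsilon$ because $\E_\CQG$ is contractive and fixes $T$. The positive refinement follows by a square-root trick: for positive $T\in\Falg$ the square root $R:=T^{1/2}$ still lies in the $C^*$-subalgebra $\Falg$, so by what has just been shown $R$ is approximated by an algebraic $B\in\Falg$, and then $T_\brep:=B^*B$ is positive, algebraic, again in $\Falg$, with $\|T-T_\brep\|=\|R^*R-B^*B\|\le\|R-B\|(\|R\|+\|B\|)$ arbitrarily small.

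For the AF statement I would first record the structural facts: each $\InvJ^{\brep,\ell}=\Falg\cap\Hh^\ell(\Hh^*)^\ell$ is a $*$-subalgebra of the finite-dimensional algebra $\Hh^\ell(\Hh^*)^\ell\cong M_{d^\ell}(\C)$, hence finite-dimensional (and, by Lemma~\ref{Lem:InterpEntrelac}, equal to the self-intertwiner algebra $\mathrm{End}(\brep^\ell)$); and the inclusions $\Hh^\ell(\Hh^*)^\ell\inj\Hh^{\ell+1}(\Hh^*)^{\ell+1}$ introduced above, being realised inside $\CAlg_d$ through the identity $\sum_pS_pS_p^*=1$, restrict to unital embeddings $\InvJ^{\brep,\ell}\hookrightarrow\InvJ^{\brep,\ell+1}$. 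Thus $\bigcup_\ell\InvJ^{\brep,\ell}$ is an increasing union of finite-dimensional $C^*$-algebras contained in $\InvJ^\brep=\Falg\cap\InvJ$, and it only remains to see that it is dense there. For this I take $T\in\InvJ^\brep$ and $\varepsilon>0$, choose algebraic $A$ with $\|T-A\|<\varepsilon$, and observe that $\Phi_0(A)$ is algebraic and gauge-invariant, hence a finite combination of words with equally many $S$'s and $S^*$'s, so that $\Phi_0(A)\in\Hh^L(\Hh^*)^L$ for $L$ the largest word-length occurring. Applying $\E_\CQG$, which maps $\Hh^L(\Hh^*)^L$ into $\Hh^L(\Hh^*)^L\cap\Falg=\InvJ^{\brep,L}$, yields $B:=\E_\CQG(\Phi_0(A))\in\InvJ^{\brep,L}$ with $\|T-B\|\le\|T-A\|<\varepsilon$, again because $\E_\CQG$ and $\Phi_0$ are contractive and both fix $T$. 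Hence $\InvJ^\brep=\overline{\bigcup_\ell\InvJ^{\brep,\ell}}=\lim_{\to}\InvJ^{\brep,\ell}$ is an AF-algebra.

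I do not expect a real obstacle. The one point needing a little care is the simultaneous handling of the two symmetries -- gauge invariance and $\CQG$-invariance -- which succeeds precisely because $\E_\CQG$ does not raise the gauge degree, carrying each finite-dimensional block $\Hh^L(\Hh^*)^L$ into itself; beyond that, only formal properties of norm-one projections and the normal form for elements of $\CAlg_d^{\text{alg}}$ are used. The sole genuinely structural ingredient is the finite-dimensionality of the intertwiner spaces $\InvJ^{\brep,\ell}$, and here it is immediate from $\InvJ^{\brep,\ell}\subseteq M_{d^\ell}(\C)$.
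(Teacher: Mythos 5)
Your proof is correct and follows essentially the same route as the paper: every claim is funnelled through the contractive projection $\E_\CQG$, which fixes $\Falg$, preserves algebraic elements, and maps $\Hh^{\ell}(\Hh^*)^{\ell}$ into $\InvJ^{\brep,\ell}$, with the identical square-root trick for the positive case. The only (harmless) difference is in the AF part, where the paper quotes the known fact that $\InvJ=\lim_{\to}\InvJ^{\ell}$ to produce a finite-level approximant $T'\in\InvJ^{\ell}$ before applying $\E_\CQG$, whereas you manufacture that approximant yourself by gauge-averaging an algebraic approximant with $\E_{S^1}$.
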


\begin{proof}
Take an element $T \in \Falg  \subseteq \CAlg_d$. By definition of $\CAlg_d$, we can find an algebraic element $T' \in \CAlg_d$ s.t. $\| T' - T \| \leqslant \varepsilon$. Using the conditional expectation $\E_\CQG $ associated to $\brep $ and setting $T_\brep  := \E_\CQG (T')$ the following holds: 
\begin{equation}
\label{Eqn:EA}
\| T_\brep - T \| = \| \E_\CQG(T') - T \| = \| \E_\CQG(T' - T) \| \leqslant \| T' - T \|  \leqslant \varepsilon.
\end{equation}
Hence $T_\brep  \in \Falg $ is an algebraic approximation of $T$ in $\Falg $.

\smallbreak

If $T \in \Falg $ is positive, we can consider its square root $B \in \Falg $ with $T = B^* B$ and $B = B^*$. Applying the above argument to $B$, we get an algebraic approximation $B_\brep $ of $B$. Finally, $T = B^* B$ is approximated by the positive algebraic element $T_\brep := B_\brep ^* B_\brep  \in \Falg $.

\smallbreak

Since $\InvJ$ is an AF-algebra and $\InvJ = \lim_{\to } \InvJ^\ell$, for any $T \in \InvJ^\brep \subseteq \InvJ$ and any $\varepsilon>0$, there is a $T' \in \InvJ^\ell$ for $\ell$ large enough such that $\| T - T' \| \leqslant \varepsilon$. Considering then $T_\brep := \E_\CQG(T')$ and using the estimate \eqref{Eqn:EA}, we see that $T_\brep $ is an algebraic approximation of $T$. Moreover, since $T' \in \InvJ^\ell$, $T_\brep \in \InvJ^{\brep , \ell}$ and the property is proven.
\end{proof}

\begin{notation}
\label{Not:TBrep}
Given a representation $\brep $ of $\CQG $, we call $\CatInd$ the set of (classes of) irreducible representations appearing in the iterated tensor products $\brep^{\ell}$ for $\ell \in \N$.
\end{notation}

Our results rely on the following conditions:
\begin{enumerate}[label=(C\arabic*)]
\item
\label{Cond:Contra}
For any $ \beta \in \CatInd $, we can find $\beta' \in \CatInd$ s.t. the representation $\beta \otimes \beta'$ possesses a nonzero invariant vector.
\item
\label{Cond:Dim}
The representation $\brep $ is not a \emph{single} irreducible representation.
\item
\label{Cond:Entrelac}
There is an integer $N \in \N$ such that $\brep^N$ is contained in $\brep^{N+1}$. 
\end{enumerate}

A few comments on these conditions:
\begin{itemize}
\item
Condition \ref{Cond:Contra} is discussed in Section \ref{Sec:Discussion} -- see Propositions \ref{Prop:SSLieGp} and \ref{Prop:FiniteGp}.
\item 
Condition \ref{Cond:Dim} is satisfied if $\brep = 2 (t)$ where $(t)$ is any irreducible representation.
\item
Given an inclusion of representations $\brep \leqslant  \beta$, it is clear that for any representation $\gamma$, $\brep \otimes \gamma \leqslant \beta \otimes \gamma$. Hence, if Condition \ref{Cond:Entrelac} is satisfied for $N_0$, then it is satisfied for any larger $N \geqslant N_0$.
\end{itemize}

\section{Simple and purely infinite algebras}
\label{Sec:SimplePI}

In this section, we prove that under Conditions \ref{Cond:Contra} and \ref{Cond:Dim}, the algebras $\Falg$ are simple and purely infinite. The gauge action $\gamma$ plays an instrumental part in our argument.
\begin{definition}
A $C^*$-algebra $C$ is called \emph{simple and Purely Infinite} (PI) if for any nonzero $T \in C$ and any $\varepsilon >0$, $\exists A, B \in C$ s.t. $\| A T B - 1 \| \leqslant \varepsilon$.
\end{definition}
It is clear from the above definition that $C$ must be simple. %\cite{RordamStormer} 4.1 p.66. %ppdf 76

\begin{prop}
\label{Prop:Proj}
If $\brep $ is a representation of $\CQG $ which satisfies \ref{Cond:Contra}, then for any nonzero gauge-invariant projection $P \in \Falg \cap \Hh^\ell (\Hh^*)^{\ell}$, we can find $L$ large enough and $A \in \Falg \cap \Hh^L$ s.t. $P A = A$, $ A^* P A = 1 $ and $ A^* A = 1$.
\end{prop}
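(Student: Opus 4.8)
The plan is to exploit the semisimplicity of the representation category of $\CQG$ together with Condition \ref{Cond:Contra}, producing the required $A$ as a (normalized) invariant vector. By Lemma \ref{Lem:InterpEntrelac}, $P$ viewed in $\Hh^\ell(\Hh^*)^\ell$ is an intertwiner of $\brep^\ell$ with itself, hence an orthogonal projection onto a nonzero subrepresentation $V := P\Hh^\ell \subseteq \Hh^\ell$ of $\brep^\ell$. Since $\mathrm{Rep}(\CQG)$ is semisimple (Woronowicz), $V$ contains an irreducible subrepresentation, say of class $\beta$; by Notation \ref{Not:TBrep}, $\beta \in \CatInd$, and we may fix an isometric intertwiner $w \colon \Hh_\beta \inj \Hh^\ell$ whose range lies inside $V$.

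Next I would invoke Condition \ref{Cond:Contra} for $\beta$: there is $\beta' \in \CatInd$ such that $\beta \otimes \beta'$ has a nonzero invariant vector $\xi$. As $\beta' \in \CatInd$, it occurs in $\brep^m$ for some $m \in \N$, so fix an isometric intertwiner $w' \colon \Hh_{\beta'} \inj \Hh^m$. Then $w \otimes w' \colon \Hh_\beta \otimes \Hh_{\beta'} \inj \Hh^\ell \otimes \Hh^m = \Hh^{\ell+m}$ is an isometric intertwiner of $\beta \otimes \beta'$ into $\brep^{\ell+m}$, so $A_0 := (w \otimes w')\xi$ is a nonzero $\brep^{\ell+m}$-invariant vector lying in $V \otimes \Hh^m$. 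Put $L := \ell + m$ and $A := A_0 / \|A_0\|$. Then $A \in \Hh^L$, $A^*A = \langle A, A\rangle 1 = 1$, and since $A$ is $\brep^L$-invariant, Lemma \ref{Lem:InterpEntrelac} (with $k = 0$) gives $A \in \Falg \cap \Hh^L$.

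It remains to verify $PA = A$ and $A^*PA = 1$. Under the identifications of Section \ref{Sec:Falg}, the canonical inclusion $\Hh^\ell(\Hh^*)^\ell \inj \Hh^L(\Hh^*)^L$ carries $P$ to $P \otimes \id_{\Hh^{L-\ell}}$, and the product $PA$ taken in $\CAlg_d$ (using repeatedly the relation $\Hh^*\Hh \subseteq \C 1$) computes to the operator $(P \otimes \id_{\Hh^{L-\ell}})(A)$. Since $A \in V \otimes \Hh^{L-\ell}$ is already in the range of this projection, $PA = A$, and therefore $A^*PA = A^*A = 1$.

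The only genuinely delicate point is the bookkeeping in the last paragraph: one must check carefully that multiplying $P \in \Hh^\ell(\Hh^*)^\ell$ by $A \in \Hh^L$ inside $\CAlg_d$ really implements $P \otimes \id$ applied to $A$, which is where the explicit form of the inclusion maps $S_{i_1}\cdots S_{i_\ell}S^*_{j_1}\cdots S^*_{j_\ell} \mapsto \sum_p S_{i_1}\cdots S_{i_\ell}S_p S_p^* S^*_{j_1}\cdots S^*_{j_\ell}$ is used. Transporting $\xi$ through $w \otimes w'$ stays inside $V \otimes \Hh^{L-\ell}$ automatically once $w$ is chosen with range in $V$, so no extra work is needed there; everything else reduces to semisimplicity of $\mathrm{Rep}(\CQG)$ and Lemma \ref{Lem:InterpEntrelac}.
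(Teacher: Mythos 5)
Your proof is correct and follows essentially the same route as the paper's: both view $P\Hh^\ell$ as a subrepresentation, pick an irreducible $\beta$ inside it, invoke Condition (C1) to obtain $\beta'$ with $\beta\otimes\beta'$ containing an invariant vector, realize that vector inside $P\Hh^{\ell+m}\subseteq\Hh^{\ell+m}$, and normalize to get $A$ with $PA=A$ and $A^*A=A^*PA=1$. The only difference is cosmetic: you construct the invariant vector explicitly as $(w\otimes w')\xi$ via isometric intertwiners, while the paper argues abstractly from the decomposition of $(\beta\oplus t)\otimes(\beta'\oplus u)$.
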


\begin{proof}
A projection $P \in \Falg$ in $\Hh^{\ell} (\Hh^*)^{\ell}$ always defines a $\brep $-invariant Hilbert space in $\Hh^{\ell}$ -- and therefore a representation of $\CQG $. Indeed, the finite dimensional Hilbert space $\HilbK := P \Hh^{\ell}$ satisfies:
\begin{equation}
\label{Eqn:StableK}
 \actionG(P x) = \actionG(P) \actionG(x) =  (P \otimes 1) \actionG(x) \in \HilbK \otimes C(\CQG ) ,
\end{equation}
thereby inducing a representation of $\CQG $ by restriction. By construction, the restriction of $\actionG $ to $\HilbK$ has a decomposition into irreducible representations which only involves (subrepresentations of) iterated tensor products of $\brep $, \textsl{i.e.} elements of $\CatInd $. Pick one such irreducible representation $\beta \in \CatInd $. From Condition \ref{Cond:Contra}, we get a $q \in \N$ and a $\beta'$ in the decomposition of $\brep ^q$ such that $\beta \otimes \beta'$ possesses an invariant vector.

\smallbreak

Using an argument similar to \eqref{Eqn:StableK}, it is easy to show that $\HilbK \otimes \Hh^{q} \simeq P \Hh^{\ell +q}$ carries a representation of $\CQG $ induced from $\brep $. If we decompose this representation, using the tensor product $\HilbK \otimes \Hh^q$ we find:
$$ ( \beta \oplus  t ) \otimes ( \beta' \oplus u ) \simeq (\beta \otimes \beta') \oplus (\beta \otimes u) \oplus (t \otimes \beta') \oplus (t \otimes u) $$
where $t$ and $u$ are sums of irreducible representations. Property \ref{Cond:Contra} thus implies that $\HilbK \otimes \Hh^q$ contains a nonzero invariant vector.

\smallbreak

Let us pick such an invariant vector $A \neq 0$ in $\HilbK \otimes \Hh^q \simeq  P \Hh^{\ell + q} \subseteq \Hh^{\ell + q}$. Without loss of generality, we can assume that $\| A \| = 1$, \textsl{i.e.} $A^* A = 1$. By construction, $\actionG(A) = A \otimes 1$ so $A \in \Falg $. Since $A$ is in $P \Hh^{\ell +q}$, we have $P A = A$. Hence
\begin{align*}
A^* A &= 1
&
A^* P A &= (PA)^* P A = A^* A = 1.
\end{align*}
\end{proof}

The following proposition implies that $E = (\Falg )^{(1)}$ over $A = \InvJ^\brep $ is aperiodic in the sense of Definition 2.1 \cite{QfreeAutZacharias}.
\begin{prop}
\label{Prop:Aperiod}
If $\brep $ satisfies Condition \ref{Cond:Contra}, then for any positive nonzero $T_0 \in \InvJ^{\brep,\ell}$, we can find an integer $L$ large enough and $A \in \Hh^L$ s.t.
$$ A^* T_0 A = 1. $$
Moreover, we can choose $A$ with $\| A \| = \| T_0 \|^{-1/2}$.
\end{prop}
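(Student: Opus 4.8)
The plan is to bootstrap from Proposition \ref{Prop:Proj}, which already gives the conclusion when the positive element is a \emph{projection}. Given a general positive nonzero $T_0 \in \InvJ^{\brep,\ell}$, the first step is to normalise: replace $T_0$ by $T_0 / \|T_0\|$, so that the spectrum lies in $[0,1]$ with $1$ in the spectrum; it suffices to produce $A$ with $A^* T_0 A = \|T_0\|$ for the normalised element, then rescale $A$ by $\|T_0\|^{-1/2}$ at the end, which is exactly the stated norm for $A$. Since $\InvJ^{\brep,\ell}$ is identified with intertwiners of $\brep^\ell$ with itself (Lemma \ref{Lem:InterpEntrelac}), it is a finite-dimensional $C^*$-algebra, hence a finite direct sum of matrix algebras, and $T_0$ has a genuine spectral decomposition. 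The key observation is that the spectral projection $P$ of $T_0$ corresponding to the top eigenvalue $\|T_0\|$ is itself a nonzero gauge-invariant projection lying in $\InvJ^{\brep,\ell} = \Falg \cap \Hh^\ell(\Hh^*)^\ell$, because spectral projections of an element of a finite-dimensional $C^*$-algebra lie in that algebra, and the gauge action fixes $T_0$ hence fixes its spectral projections.

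Next I would apply Proposition \ref{Prop:Proj} to this projection $P$: we obtain $L$ large and $A \in \Falg \cap \Hh^L$ with $PA = A$, $A^*PA = 1$ and $A^*A = 1$ (note $L = \ell + q$ in the notation there, so $L \geqslant \ell$ is automatic). Now since $P$ is the spectral projection of the normalised $T_0$ at eigenvalue $1$, we have $T_0 P = P T_0 = P$ as operators in the finite-dimensional algebra (the top eigenvalue being $1$ after normalisation). Therefore
\begin{equation*}
A^* T_0 A = A^* T_0 (P A) = A^* (T_0 P) A = A^* P A = 1,
\end{equation*}
where I used $PA = A$ in the first equality and $T_0 P = P$ in the third. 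This gives the claim for the normalised $T_0$; undoing the normalisation, for the original $T_0$ we get $A^* T_0 A = \|T_0\|$, and replacing $A$ by $\|T_0\|^{-1/2} A$ (which still lies in $\Hh^L \cap \Falg$, and still satisfies $PA = A$) yields $A^* T_0 A = 1$ with $\|A\|^2 = A^* A = \|T_0\|^{-1}$, i.e. $\|A\| = \|T_0\|^{-1/2}$, as required.

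I do not expect a serious obstacle here: the statement is essentially a corollary of Proposition \ref{Prop:Proj} once one notices that positivity lets one pass to the top spectral projection inside the finite-dimensional algebra $\InvJ^{\brep,\ell}$. The only point requiring a line of justification is that $A$ remains gauge-invariant and in $\Falg$ after rescaling and that $PA = A$ is preserved — both are immediate since rescaling by a scalar commutes with everything. One should also double-check the bookkeeping on the index: Proposition \ref{Prop:Proj} produces $A \in \Hh^{\ell+q}$ for some $q \in \N$ coming from Condition \ref{Cond:Contra}, so the ``integer $L$ large enough'' in the statement is simply $L = \ell + q$, and there is no circularity. Finally, if one wants to be scrupulous, the identity $T_0 P = P$ after normalisation can be verified via continuous functional calculus: writing $f$ for the characteristic function of $\{1\}$ (continuous on the finite spectrum), $P = f(T_0)$ and $T_0 f(T_0) = (\mathrm{id}\cdot f)(T_0) = f(T_0)$ since $t f(t) = f(t)$ on $\mathrm{sp}(T_0) \subseteq [0,1]$.
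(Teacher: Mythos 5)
Your proof is correct and follows essentially the same route as the paper's: both pass to the spectral decomposition of $T_0$ inside the finite-dimensional algebra $\InvJ^{\brep,\ell}$, apply Proposition \ref{Prop:Proj} to the top spectral projection, and rescale to obtain $\| A \| = \| T_0 \|^{-1/2}$. The only cosmetic difference is that the paper kills the other spectral components via orthogonality of the projections ($A^* P_i = A^* P_j P_i = 0$ for $i \neq j$) rather than normalising first and using $T_0 P = P$.
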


\begin{proof}
We fix a positive $T_0 \in \InvJ^{\brep,\ell}$ with $T_0 \neq 0$. Since $T_0$ is a normal element of the finite dimensional algebra $ \InvJ^{\brep,\ell}$, we can write it as a finite sum:
$$ T_0 = \sum_{i} \lambda_i P_i ,$$
where the $P_i$ are its spectral projections and the $\lambda_i$ are its eigenvalues. Pick a nonzero eigenvalue $\lambda_j$ and apply Proposition \ref{Prop:Proj} to $P_j$ to get $A \in \Hh^L \cap \Falg $ s.t. $A^* P_j A = 1$. Since $P_j$ is orthogonal to the other spectral projections, for $i \neq j$, we have:
$$ A^* P_i = (P_j A)^* P_i = A^* P_j P_i = 0$$ 
and thus:
$$ A^* T_0 A = A^* \left(\sum_{i} \lambda_i P_i \right) A = \lambda_j A^* P_j A = \lambda_i 1.$$
We just have to renormalise $A$ to get $1$. To estimate $\| A' \|$ for the renormalised $A'$, note that $\InvJ^\ell$ is finite dimensional and $T_0$ positive, thus if we pick the largest eigenvalue $\lambda_i = \| T_0 \|$ we get $\| A' \| = \| T_0 \|^{-1/2}$.
\end{proof}

We follow the line of argument of the proof of Theorem 4.2.2 of \cite{RordamStormer} (compare p.73 therein) and use ``cutting projections'', which we now construct:
\begin{lem}
\label{Lem:QN}
If $\alpha$ satisfies \ref{Cond:Dim}, then for any $N \in \N \setminus \{ 0 \}$, there is a $Q_N \in \InvJ^\alpha$ such that
\begin{itemize}
\item
$Q_N$ is a nonzero projection;
\item 
 $Q_N \, \rho^N(e) = 0$;
\item
for any $0 < n \leqslant N$, we have $ Q_N \rho^{n}(Q_N) = 0$.
\end{itemize}
\end{lem}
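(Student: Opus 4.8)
The idea is to exploit condition \ref{Cond:Dim}: since $\brep$ is not a single irreducible, the space $\InvJ^{\brep,1} = \Falg \cap \Hh(\Hh^*)$ of self-intertwiners of $\brep$ is strictly larger than $\C 1$, so it contains a \emph{nontrivial} projection $q$ with $0 \neq q \neq 1$. (If $\brep = \bigoplus m_i (t_i)$ with either two distinct irreducibles or one irreducible with multiplicity $\geqslant 2$, the commutant is $\bigoplus M_{m_i}(\C)$, which in all cases has a projection that is neither $0$ nor $1$; one may even take $q$ so that $1-q$ also maps onto something of dimension $\geqslant 1$.) This $q$ lies in $\InvJ^\brep$ and satisfies $q \neq 0$, $q e \neq e$, where I write $e = e_\Hh \in \Hh(\Hh^*)$ for the unit of $\Hh(\Hh^*) \cong M_d(\C)$, i.e. $q(1-e) \neq 0$ — actually the cleaner statement is that $q$ is a proper subprojection of the identity of $M_d(\C)$, so $\rho$-translated copies can be made to sit in orthogonal corners.

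First I would set $p := q$, viewed inside $\InvJ^{\brep,1} \subseteq \InvJ^\alpha$, chosen to be a proper nonzero projection of $M_d(\C) = \Hh(\Hh^*)$. Then I would define, following the ``cutting projection'' construction on p.~73 of \cite{RordamStormer},
$$ Q_N := p \,\rho(p)\, \rho^2(p) \cdots \rho^{N-1}(p)\,\bigl(1 - \rho^N(e)\bigr), $$
where $e$ is the unit of $\Hh(\Hh^*)$ so that $\rho^N(e) = e_{\Hh^N}$ is the unit of $\InvJ^N = \Hh^N(\Hh^*)^N$. I need three things. (i) \emph{$Q_N$ is a projection:} the factors $\rho^k(p)$ for $0 \leqslant k \leqslant N-1$ pairwise commute because $\rho^j(p) \in \Hh^{j+1}(\Hh^*)^{j+1} \subseteq \InvJ^{j+1}$ sits inside the matrix algebra $M_{d^{j+1}}(\C)$ and, using \eqref{Eqn:CommRho} with $n=0$, $\rho^i(p)$ and $\rho^j(p)$ commute for all $i,j$; similarly each commutes with $\rho^N(e)$, and they are all self-adjoint, so the product is a self-adjoint idempotent. (ii) \emph{$Q_N \rho^N(e) = 0$:} immediate since $Q_N$ has the factor $(1 - \rho^N(e))$ and $\rho^N(e)$ commutes with everything and is idempotent. (iii) \emph{$Q_N \in \InvJ^\alpha$:} $p \in \Falg$, $\rho$ preserves $\Falg$ (it is built from the $S_j$ and $\actionG(\rho(T)) = \rho(\actionG(T))$ in the obvious sense because $\sum_j S_j \otimes \brep_{\cdot j}$ implements $\actionG$), $e \in \Falg$, and all factors are gauge-invariant, so $Q_N \in \InvJ^\alpha$.

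The two remaining points are nonvanishing and the orthogonality $Q_N \rho^n(Q_N) = 0$ for $0 < n \leqslant N$. For \textbf{nonvanishing}: I would argue that $p\,\rho(p)\cdots\rho^{N-1}(p)$ is a nonzero projection in $\InvJ^N = M_{d^N}(\C)$ — indeed it is $p \otimes p \otimes \cdots \otimes p$ ($N$ tensor factors) under the identification $\Hh^N(\Hh^*)^N \cong \mathrm{End}(\Hh^{\otimes N})$, so its rank is $(\dim p\Hh)^N \geqslant 1$ — and then that multiplying by $(1-\rho^N(e))$ does \emph{not} kill it. This is the delicate spot: $\rho^N(e)$ is the unit of $\InvJ^N$, so inside $\InvJ^N$ the factor $(1-\rho^N(e))$ is zero! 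The resolution is that the product must be read inside $\InvJ^{\brep}$, not inside $\InvJ^N$: one uses the inclusion $\InvJ^N \inj \InvJ^{N+1}$ which sends $x \mapsto \sum_p x S_p S_p^* = x \,\rho^N(e)$... so actually I should define $Q_N$ more carefully as
$$ Q_N := p\,\rho(p)\cdots \rho^{N-1}(p) - p\,\rho(p)\cdots\rho^{N-1}(p)\,\rho^{N-1}(e), $$
i.e. the difference between the rank-$(\dim p\Hh)^N$ projection in $\InvJ^N \subseteq \InvJ^{N}$ and its image after one further inclusion step into $\InvJ^{N}$ — hmm, this needs the honest bookkeeping of which matrix algebra each term lives in, exactly as in \cite[p.~73]{RordamStormer}. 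The \textbf{orthogonality} $Q_N \rho^n(Q_N) = 0$ then follows from the first $N$ factors: $\rho^n(Q_N)$ contains the factor $\rho^n(p)\cdots\rho^{n+N-1}(p)$, which includes $\rho^n(p),\dots,\rho^{N-1}(p)$ (since $n \leqslant N$, hence $n \leqslant N-1 < n+N-1$ unless $n=N$) and, crucially, $\rho^n$ of the ``$1-\rho^{N-1}(e)$''-type factor; multiplying $Q_N$ (which has a factor $\rho^{N-1}(p)$ or the corresponding $1-e$ term living in $\InvJ^N$) against $\rho^n(Q_N)$ produces $\rho^{N-1+\text{something}}(e)$ against its complement, giving $0$. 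For $n=N$ one uses point (ii): $Q_N \rho^N(\text{anything in }\InvJ^\alpha) = Q_N \rho^N(e)\rho^N(\cdot) = 0$.

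\textbf{Main obstacle.} The genuinely fiddly part is the one I flagged: the factor ``$1 - \rho^N(e)$'' is the zero operator \emph{within} $\InvJ^N$, so the entire construction only makes sense if one is scrupulous about the embeddings $\InvJ^\ell \inj \InvJ^{\ell+1}$ and works with elements of the inductive limit $\InvJ^\alpha$, where $\rho^N(e) = e_{\Hh^N}$ is a \emph{proper} subprojection of later units. Getting the indices right so that (a) $Q_N \neq 0$, (b) $Q_N\rho^N(e) = 0$, and (c) $Q_N\rho^n(Q_N)=0$ for all $0 < n \leqslant N$ hold simultaneously is exactly the content of the \cite{RordamStormer} Thm.~4.2.2 argument, and transcribing it faithfully — replacing their projection onto a one-dimensional subspace by our $p$ and carrying the quantum-group-invariance along — is the real work. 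Everything else (commutation via \eqref{Eqn:CommRho}, gauge-invariance, $\actionG$-invariance, self-adjointness) is routine.
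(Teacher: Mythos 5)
Your plan starts from the right place (a proper, nonzero projection $p\in\InvJ^{\brep,1}$ supplied by Condition \ref{Cond:Dim}, and a product of $\rho$-translates in the style of \cite{RordamStormer}), but the construction you actually write down is the zero element, and your attempted repair does not fix it. The problem is your choice of $e$ as the \emph{unit} of $\Hh(\Hh^*)\cong M_d(\C)$: in $\CAlg_d$ that unit is $\sum_j S_jS_j^*=1$, so $\rho^N(e)=1$ and your factor $1-\rho^N(e)$ vanishes identically. You correctly flag this, but the proposed resolution \textsl{via} the inclusions $\InvJ^\ell\inj\InvJ^{\ell+1}$ cannot work either: those inclusions are literally the identity on elements of $\CAlg_d$ (since $\sum_p S_pS_p^*=1$), so the units of all the $\InvJ^\ell$ coincide and there is no ``proper subprojection of a later unit'' to exploit; your second formula is again $x-x=0$. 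The part you defer as ``the real work'' is therefore not bookkeeping --- it is the missing idea. Note also that the $e$ appearing in the statement of the lemma is \emph{not} the unit of $M_d(\C)$; it is the nontrivial projection furnished by \ref{Cond:Dim} (the projection onto one irreducible summand of $\brep$), i.e.\ your $p$.

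The paper's proof closes exactly this gap by putting the proper projection itself in the last slot: inductively $Q_1:=e\rho(1-e)$ and $Q_{N+1}:=e\rho(Q_N)$, which unrolls to $Q_N=e\rho(e)\cdots\rho^{N-1}(e)\,\rho^{N}(1-e)$ with $0\neq e\neq 1$. Under the identification of $\Hh^{N+1}(\Hh^*)^{N+1}$ with $\mathrm{End}(\Hh^{\otimes(N+1)})$ this is $e\otimes\cdots\otimes e\otimes(1-e)\neq 0$, and the commutation relation \eqref{Eqn:CommRho} gives $Q_N\rho^N(e)=0$ directly. The orthogonality $Q_N\rho^n(Q_N)=0$ for $0<n\leqslant N$ then follows because at level $N$ the factor of $Q_N$ is $1-e$ while the corresponding factor of $\rho^n(Q_N)$ is $e$ (for $1\leqslant n\leqslant N$ the index $N$ lies in $\{n,\dots,n+N-1\}$), and these are orthogonal. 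So your argument can be salvaged by replacing your final factor $1-\rho^N(e_{\text{unit}})$ with $\rho^N(1-p)$; everything else in your outline (commutativity of the $\rho^k(p)$, invariance under $\actionG$ and the gauge action, self-adjointness) is correct as stated.
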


\begin{proof}
Since $\brep $ satisfies Condition \ref{Cond:Dim}, we can find a projection $e \in \Hh^1 (\Hh^*)^1$ s.t. $e \neq 1$ and $1 - e \neq 1$: it suffices to consider the projection onto a single irreducible representation in $\brep $. In particular, for any $T \in \CAlg_d$, \eqref{Eqn:CommRho} shows that $\rho(T) e = e\rho(T)$.

We construct $Q_N$ by induction. For $N = 1$, we take $Q_1 := e \rho(1 - e)$. This is clearly a nonzero projection. We can evaluate $Q_1 \rho(e)= e \rho((1-e) e) = 0$ and thus $ Q_1 \rho(Q_1) = Q_1 \rho(e) \rho^2(1-e) = 0 $. 

Now if we have $Q_N$, we construct $Q_{N+1}$ by setting $Q_{N+1} := e \rho(Q_N)$. Since $Q_N \neq 0$, we have $Q_{N+1} \neq 0$. It is not difficult to prove that $Q_{N+1}$ is a projection. Moreover,
$$ Q_{N+1} \rho^{N+1}(e) = e \rho(Q_N \rho^N(e)) =0. $$
Now, for $0 < n \leqslant N$, we have:
$$  Q_{N+1} \rho^{n}(Q_{N+1}) = e \rho(Q_N) \rho^n( \rho(Q_N) e) = e \rho\Big( Q_N \rho^n(Q_N) \rho^{n-1}(e) \Big) = 0 .$$
For $N+1$:
$$  Q_{N+1} \rho^{N+1}(Q_{N+1}) = e \rho(Q_N) \rho^{N+1}(e \rho(Q_N)) = e \rho( Q_N \rho^N(e \rho(Q_N))) =0.$$
We thus get a family of cutting projections.
\end{proof}

\begin{definition}
The \emph{conditional expectation} $\E_{S^1}$ associated to the gauge action is defined by:
$$ \E_{S^1}(T) = \int_{S^1} \gamma_z(T) d \lambda(z) .$$
\end{definition}

\begin{prop}
\label{Prop:Cut}
If $\alpha$ satisfies \ref{Cond:Dim} and if $T \in \Falg$ is an algebraic element, there is a projection $Q \in \InvJ^\alpha$ such that 
\begin{align*}
Q T Q &= Q \E_{S^1}(T) Q 
&
\| Q \E_{S^1}(T) Q \|&=\| \E_{S^1}(T) \| \leqslant \| T \|.
\end{align*}
\end{prop}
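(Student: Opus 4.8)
The plan is to exploit the gauge grading on $\CAlg_d$ together with the cutting projections $Q_N$ from Lemma \ref{Lem:QN}. Since $T\in\Falg$ is algebraic, it is a finite linear combination of monomials $S_{i_1}\cdots S_{i_\ell}S_{j_1}^*\cdots S_{j_k}^*$, so it is gauge-homogeneous of bounded total gauge: we can write $T=\sum_{|n|\leqslant M}T_n$ with $T_n\in\Falg\cap\CAlg_d^{(n)}$ for some finite $M$, and $\E_{S^1}(T)=T_0$. The idea is to choose $N$ large compared to $M$ (say $N\geqslant M$) and set $Q:=Q_N$, then show that conjugating by $Q_N$ kills every homogeneous piece $T_n$ with $n\neq 0$ while leaving $Q_N T_0 Q_N$ untouched, and moreover that $Q_N$ can be taken large enough that $\|Q_N T_0 Q_N\|=\|T_0\|=\|\E_{S^1}(T)\|$.

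First I would reduce each nonzero-gauge monomial to the form $\rho^a(x)\rho^b(y)^*\cdots$ or, more usefully, observe that any gauge-homogeneous algebraic element $T_n$ of total gauge $n\neq 0$ can, after multiplying by suitable powers of the $S_j$, be written so that it factors through $\rho^m(1)$ or through one of the shifted copies — concretely, using \eqref{Eqn:CommRho} one pushes the element past $\rho$ so that $Q_N T_n Q_N$ acquires a factor of the form $Q_N\rho^{n'}(Q_N)$ with $0<|n'|\leqslant N$ (or $Q_N\rho^N(e)$), which vanishes by Lemma \ref{Lem:QN}. The key computational point is that for an algebraic element of total gauge $n$ supported on words of length at most $N$, the relation $S\,T'=\rho(T')S$ lets one slide all the creation/annihilation operators to one side so that sandwiching between two copies of $Q_N$ produces exactly the forbidden products of Lemma \ref{Lem:QN}. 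Hence $Q_N T_n Q_N=0$ for $n\neq 0$, so $Q_N T Q_N=Q_N T_0 Q_N=Q_N\E_{S^1}(T)Q_N$.

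For the norm statement, note $\E_{S^1}$ is a norm-one conditional expectation so $\|\E_{S^1}(T)\|\leqslant\|T\|$ is automatic, and $\|Q_N\E_{S^1}(T)Q_N\|\leqslant\|\E_{S^1}(T)\|$ since $Q_N$ is a projection; the content is the reverse inequality $\|Q_N\E_{S^1}(T)Q_N\|\geqslant\|\E_{S^1}(T)\|$. Here $\E_{S^1}(T)=T_0\in\InvJ^\brep$ lies in the gauge-invariant AF-algebra, and by Lemma \ref{Lem:AlgApprox} it is a limit of elements of $\InvJ^{\brep,\ell}$; one uses that $T_0\rho^\Lambda(S)=\rho^\Lambda(S)T_0$ type relations together with the fact that the cutting projection $Q_N$, being built from $e$ and its $\rho$-translates, commutes appropriately with $T_0$ once $N$ is taken large enough relative to the gauge-length of $T$. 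More precisely, by the structure of $\InvJ^{\brep}$ as an inductive limit of matrix algebras and the construction of $Q_N=e\,\rho(e)\,\rho^2(1-e)\cdots$ (or the inductive $Q_{N+1}=e\rho(Q_N)$), the compression by $Q_N$ preserves a subalgebra onto which $T_0$ restricts isometrically — this is exactly the role played by the analogous projections in the proof of Theorem 4.2.2 of \cite{RordamStormer}.

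The main obstacle I anticipate is the second equality $\|Q_N\E_{S^1}(T)Q_N\|=\|\E_{S^1}(T)\|$: the vanishing $Q_N T_n Q_N=0$ is a direct bookkeeping exercise with \eqref{Eqn:CommRho} and Lemma \ref{Lem:QN}, but showing the compression does not shrink the norm of the gauge-invariant part requires identifying the precise way $Q_N$ sits inside the AF-algebra $\InvJ^\brep$ and checking that for $N$ large enough (depending on the algebraic element $T$) the support projection of $Q_N$ dominates enough of the relevant matrix blocks to recover the full norm of $T_0$. I would handle this by first approximating $T_0$ by an element of $\InvJ^{\brep,\ell}$ via Lemma \ref{Lem:AlgApprox}, working in the finite-dimensional algebra $M_{d^\ell}(\C)$, and exploiting that $Q_N$ there corresponds to a nonzero projection whose $\rho$-translates tile a unit — this is the step where Condition \ref{Cond:Dim} is really used, guaranteeing $e\neq 1$ and $e\neq 0$ so the $Q_N$ are genuinely nonzero and "large."
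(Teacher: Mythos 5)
Your overall strategy (decompose $T$ into gauge-homogeneous pieces, kill the pieces of nonzero gauge with the cutting projections of Lemma \ref{Lem:QN}, argue the compression is isometric on the gauge-zero part) is the same as the paper's, but your choice of cutting projection is wrong, and the argument breaks at the step you treat as routine bookkeeping. Setting $Q := Q_N$ with $N$ bounding only the total gauge does \emph{not} annihilate the nonzero-gauge pieces: take $d=2$, $e = S_1S_1^*$, so that $Q_1 = e\rho(1-e) = S_1S_2S_2^*S_1^*$, and $T_1 = S_1S_2S_1^*$, which has total gauge $1$; then $Q_1 T_1 Q_1 = S_1S_2S_2S_2^*S_1^* \neq 0$. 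The mechanism you invoke --- sliding $T_n$ past $Q$ via \eqref{Eqn:CommRho} to produce a forbidden factor $Q_N\rho^{n}(Q_N)$ --- requires $Q$ to lie in the image of $\rho^\Lambda$ with $\Lambda$ at least as large as the word lengths occurring in $T$ (this is the hypothesis $\Lambda \geqslant \mu$ in \eqref{Eqn:CommRho}), and $Q_N$ itself is not of that form. The paper therefore takes $Q = \rho^\Lambda(Q_N)$ with $\Lambda = \Lambda_0 + N$, where $\Lambda_0 = \max_\beta \min\{k_\beta, m_\beta\}$ over the reduced words of $T$ and $N = \max_\beta |k_\beta - m_\beta|$; then $T_n\,\rho^\Lambda(Q_N) = \rho^{\Lambda+n}(Q_N)\,T_n$ and $\rho^\Lambda(Q_N)\rho^{\Lambda+n}(Q_N) = \rho^\Lambda\bigl(Q_N\rho^{n}(Q_N)\bigr) = 0$ by Lemma \ref{Lem:QN}. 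Your later hedge ``words of length at most $N$'' points toward a repair, but it contradicts your stated choice $N \geqslant M$ and is not the argument you actually run.

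The norm equality is also not established. You correctly flag it as the hard point, but approximating $T_0$ by elements of $\InvJ^{\brep,\ell}$ is beside the point ($T_0$ is already algebraic and lies in $\InvJ^{\Lambda_0} \subseteq \InvJ^{\Lambda}$), and ``the support projection of $Q_N$ dominates enough of the matrix blocks'' is a hope, not a proof. The clean argument, which again only works because of the shift $\rho^\Lambda$: by \eqref{Eqn:CommRho} with $n=0$, $Q = \rho^\Lambda(Q_N)$ commutes with every element of $\InvJ^{\Lambda} = \Hh^{\Lambda}(\Hh^*)^{\Lambda}$, so $x \mapsto QxQ = xQ$ is a nonzero $*$-homomorphism of the simple finite-dimensional algebra $\InvJ^{\Lambda} \simeq M_{d^{\Lambda}}(\C)$, hence injective and isometric; applied to $T_0 \in \InvJ^{\Lambda}$ this gives $\|Q\E_{S^1}(T)Q\| = \|T_0\| = \|\E_{S^1}(T)\|$. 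This is the content of the paper's reference to Proposition 1.7 of Cuntz, and it cannot be waved through with the unshifted $Q_N$, which does not commute with $\InvJ^{\Lambda}$.
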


\begin{remarque}
It is well known that $\E_{S^1}$ is a faithful conditional expectation (Remark 2 in 1.10 of \cite{CuntzAlg}). Since the gauge action of $S^1$ ``commutes'' with $\actionG$, $\E_{S^1}$ sends $\Falg$ to $\Falg$ (see Proposition 4.3 of \cite{CpctQGpMarciniak}). In particular, if $T$ is positive and nonzero, then $Q \E_{S^1}(T) Q$ is also positive and nonzero.
\end{remarque}

\begin{proof}
We start with an argument similar to \cite{RordamStormer} p.73. $T$ can be written as a (finite) linear combination of reduced words $W_\beta = S_{i_1} \cdots S_{i_{k_\beta}} S^*_{j_1} \cdots S_{j_{m_\beta}}^* \in \Hh^{k_\beta} (\Hh^*)^{m_\beta}$. We then define 
\begin{align*}
\Lambda_0 &:= \max_\beta \min \{ k_\beta, m_\beta \}
&
N &:= \max_\beta \{ |k_\beta - m_\beta | \}.
\end{align*}
In other words, $N$ is the absolute value of the highest total gauge which appears in $T$. We can then decompose $T$ into a finite sum of gauge-homogeneous elements:
$$ T = \sum_{n > 0} T_n + T_0 + \sum_{n<0} T_n ,$$
where each $T_n \in (\Falg )^{(n)}$ and only a finite number of $T_n$ are nonzero. In particular, if $T_n \neq 0$, then $|n| \leqslant N$. We compute $Q T Q$ with $Q = \rho^\Lambda(Q_N)$, where $\Lambda = \Lambda_0 + N$, using \eqref{Eqn:CommRho} and Lemma \ref{Lem:QN}:
\begin{multline*}
Q T Q = \rho^\Lambda(Q_N) T \rho^\Lambda(Q_N) \\
=\sum_{n >0} \rho^\Lambda(Q_N) T_n \rho^\Lambda(Q_N) + Q T_0 Q + \sum_{n <0} \rho^\Lambda(Q_N) T_n \rho^\Lambda(Q_N) \\
=\sum_{n >0}\rho^{\Lambda}(Q_N) \rho^{\Lambda + n}(Q_N) T_n + Q T_0 Q + \sum_{n <0} T_n \rho^{\Lambda+n}(Q_N) \rho^\Lambda(Q_N)\\
=\sum_{n >0}\rho^{\Lambda} \big(Q_N \rho^{n}(Q_N)\big) T_n + Q T_0 Q + \sum_{n <0}  T_n \rho^\Lambda \big( \rho^{n}(Q_N) Q_N \big) \\
= Q T_0 Q = Q \E_{S^1}(T) Q.
\end{multline*}
The same argument as in the proof of Proposition 1.7 of \cite{CuntzAlg} (see p.177) shows that $T \mapsto Q T Q$ induces an isomorphism between $\InvJ^\ell$ and $Q \InvJ^\ell Q$ for $\ell = \Lambda$. In particular, $\| T_0 \| = \| Q T_0 Q \|$.
\end{proof}

\begin{thm}
\label{Thm:PI1}
Let $\brep $ be a representation of $\CQG $ which satisfies Conditions \ref{Cond:Contra} and \ref{Cond:Dim}, the fixed point algebra $\Falg $ is simple and PI.
\end{thm}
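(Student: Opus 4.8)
The plan is to verify the defining property of simple purely infinite algebras directly: given a nonzero $T \in \Falg$ and $\varepsilon > 0$, produce $A, B \in \Falg$ with $\|A T B - 1\| \leqslant \varepsilon$. By a standard reduction it suffices to treat the case where $T$ is positive: replacing $T$ by $T^* T$ and using that $\|A (T^*T) B - 1\|$ small gives what we want after absorbing $T^*$ into $A$. So first I would reduce to $T \geqslant 0$, $\|T\| = 1$, and then use Lemma \ref{Lem:AlgApprox} to replace $T$ by a positive \emph{algebraic} element $T_\brep \in \Falg$ with $\|T_\brep - T\|$ small; it is enough to handle algebraic positive elements since the perturbation can be pushed into the final estimate (and by rescaling we may assume $\|T_\brep\|$ is, say, close to $1$, certainly bounded below).

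Next I would apply Proposition \ref{Prop:Cut} to the positive algebraic $T_\brep \in \Falg$: this yields a nonzero projection $Q \in \InvJ^\brep$ (in fact $Q = \rho^\Lambda(Q_N) \in \Hh^\Lambda(\Hh^*)^\Lambda$) such that $Q T_\brep Q = Q \,\E_{S^1}(T_\brep)\, Q$, and moreover $Q \,\E_{S^1}(T_\brep)\, Q$ is positive and nonzero, lying in $\InvJ^{\brep,\Lambda}$ because the cut-down restricted to $\InvJ^\Lambda$ is an isomorphism onto $Q \InvJ^\Lambda Q$ (as noted at the end of that proof) and $\E_{S^1}$ preserves $\Falg$. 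So set $T_0 := Q \,\E_{S^1}(T_\brep)\, Q \in \InvJ^{\brep,\Lambda}$, a nonzero positive element of the finite-dimensional intertwiner algebra. Now invoke Proposition \ref{Prop:Aperiod}: there is $L$ and $C \in \Hh^L$ with $C^* T_0 C = 1$, i.e. $C^* Q \,\E_{S^1}(T_\brep)\, Q\, C = 1$. Writing $A_0 := C^* Q$ and $B_0 := Q C$, both in $\Falg$ (since $Q, C \in \Falg$), we get $A_0 \,\E_{S^1}(T_\brep)\, B_0 = 1$.

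The remaining issue is that we have produced an identity involving $\E_{S^1}(T_\brep)$ rather than $T_\brep$ itself; but the whole point of Proposition \ref{Prop:Cut} is that $Q T_\brep Q = Q \,\E_{S^1}(T_\brep)\, Q$, so in fact $A_0 T_\brep B_0 = C^* Q T_\brep Q C = C^* Q \,\E_{S^1}(T_\brep)\, Q C = 1$ with no error at all. Finally, undoing the approximation: $\|A_0 T B_0 - 1\| = \|A_0 (T - T_\brep) B_0\| \leqslant \|A_0\|\,\|B_0\|\,\|T - T_\brep\|$, and since $\|A_0\|, \|B_0\|$ are fixed once $T_\brep$ is fixed (they depend on the norms controlled in Propositions \ref{Prop:Cut} and \ref{Prop:Aperiod}), choosing the algebraic approximation $T_\brep$ close enough to $T$ makes this $\leqslant \varepsilon$. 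Then $A := A_0 T^*$... — more cleanly, tracking the initial reduction: if we started from general nonzero $T$, apply the above to $T^* T$ to get $A_0, B_0$ with $\|A_0 (T^*T) B_0 - 1\|$ small, and take $A := A_0 T^*$, $B := B_0$.

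The main obstacle — and really the heart of the argument — is everything that has already been packaged into Propositions \ref{Prop:Proj}, \ref{Prop:Aperiod}, \ref{Prop:Cut} and Lemma \ref{Lem:QN}: namely, that Condition \ref{Cond:Contra} lets one find, inside large tensor powers $\Hh^L$, genuine $\CQG$-invariant vectors $A$ that compress a given invariant projection to $1$ (this is where "enough room" in the fusion category is used), and that Condition \ref{Cond:Dim} supplies the cutting projections $Q_N$ that kill the off-diagonal gauge-homogeneous parts so that the cut-down sees only the gauge-invariant part $\E_{S^1}(T_\brep)$. Given those ingredients the assembly above is routine; the only care needed is the bookkeeping of norms to ensure the final $\varepsilon$-estimate, and the reduction from general $T$ to positive algebraic $T$.
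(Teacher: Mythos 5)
Your proposal is correct and follows essentially the same route as the paper's proof: reduce to a positive element, replace it by a positive algebraic approximant via Lemma \ref{Lem:AlgApprox}, cut down by the projection of Proposition \ref{Prop:Cut} so that only the gauge-invariant part $\E_{S^1}(T_\brep)$ survives, compress that to $1$ by Proposition \ref{Prop:Aperiod}, and push the approximation error into the final estimate. The one detail to tighten is the uniformity of $\|A_0\|\,\|B_0\|$ over the choice of $T_\brep$ (as stated, your quantifiers are slightly circular since $A_0,B_0$ depend on $T_\brep$): as in the paper, normalise $\|\E_{S^1}(T)\|=1$ rather than $\|T\|=1$, so that $\|T_0\|=\|\E_{S^1}(T_\brep)\|\geqslant 1-\varepsilon$ and Proposition \ref{Prop:Aperiod} gives the uniform bound $\|C\|^2\leqslant 2$ for $\varepsilon<1/2$.
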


\begin{proof}
It suffices to find $A'$ s.t. $ \| (A')^* T^* T A' - 1 \| \leqslant \varepsilon$, since then we can take $(A')^* T^* = A$ and $B = A'$. Hence we only treat the case of $T$ strictly positive. Without loss of generality, we can further assume that $\| \E_{S^1}(T) \| = 1$.

\smallbreak

Using Lemma \ref{Lem:AlgApprox} on $T$, we get an algebraic, positive $T_\brep \in \Falg$ s.t
$$ \| T - T_\brep \| \leqslant \varepsilon .$$
In particular, for $\varepsilon$ small enough, $T_\brep$ is strictly positive. We can then apply Proposition \ref{Prop:Cut} to $T_\brep$ and get a projection $Q$ such that $T_1 := Q T_\brep Q$ is nonzero, positive and gauge-invariant.

Applying Proposition \ref{Prop:Aperiod} to $T_1$, we recover $A$ s.t. $A^* T_1 A = 1 = A^* Q T_\brep Q A$. Hence,
$$ \| A^* Q T Q A - 1 \| = \| A^* Q T Q A - A^* Q T_\brep Q A \| \leqslant \| A \|^2 \| T - T_\brep \|,$$
because $\| Q \| = 1$. To conclude, we need to find a lower bound for $\| T_1 \|$ in order to use the estimate on $\| A\|$.
$$ \| \E_{S^1}(T_\brep ) - \E_{S^1}(T) \| \leqslant \| T_\brep  - T \| \leqslant \varepsilon $$
implies $\| T_1 \| = \| \E_{S^1}(T_\brep ) \| \geqslant 1 - \varepsilon$ since $\| \E_{S^1}(T) \| = 1$. Thus Proposition \ref{Prop:Aperiod} yields $\| A \|^2 \leqslant 2$ for $\varepsilon$ smaller than $1/2$ and we get 
$$ \| (Q A)^* T Q A - 1 \| \leqslant 2 \varepsilon .$$
Therefore, the algebra $\Falg $ is PI.
\end{proof}

\begin{remarque}
\label{Rk:Zacharias}
We proved Theorem \ref{Thm:PI1} because we could not find a choice of ``coefficient algebra'' for $\Falg $ which would imply Property ($\ast$) of \cite{QfreeAutZacharias}.

\smallbreak

Let us illustrate, in the case of the Cuntz algebra, that this choice of coefficient algebra is crucial to apply Property ($\ast$). Indeed, the Cuntz algebra $\CAlg_d$ is a Pimsner algebra (Example (1) p.192 of \cite{PiCrG}). It can be realised either with coefficient algebra $A = \C$ (and $E = \C^d$) or with $A = \InvJ$ (and $E = (\CAlg_d)^{(1)}$). 
\begin{itemize}
\item
For $A = \C$, Property ($\ast$) says that $\xi$ and $\eta$ are two orthonormal elements of $\C^d$ -- thus this condition is satisfied. 
\item
For $A = \InvJ$, we can always consider $c = \xi \eta^*$ and evaluate: $\langle \xi, c \eta \rangle = \xi^* c \eta = \xi^* \xi \eta^* \eta = 1$ since both $\xi$ and $\eta$ are isometries. Thus Property ($\ast$) is \emph{not} satisfied.
\end{itemize}
However, in some sense Property ($\ast$) and Condition \ref{Cond:Dim} are similar, since they both ensure that $E$ is ``large enough''.
\end{remarque}

\section{Computation of $K$-groups}
\label{Sec:CompKth}

In this section, we start by a quick review of (Cuntz-)Pimsner algebras. We then prove that under Condition \ref{Cond:Entrelac}, the fixed point algebras are Pimsner algebras. Applying Theorem 4.9 of \cite{PiCrG}, we compute the $K$-theory of $\Falg $ -- which only depends on $\CQG $ up to $R^+$-isomorphism.

\smallbreak

Very analogous results for $\InvJ^\brep $ were first stated in the case of (ordinary)  compact groups by Antony Wassermann in his PhD thesis \cite{TheseWassermann} (see in particular III.3.(iii) p.103 and III.7. p.157). He also sketched a proof of the results for $\Falg$. Here we give complete proofs of the results in the CQG setting.

We use the book \cite{Black} as a general reference, especially for Hilbert modules and $C^*$-correspondences. We render Def. II.7.1.1 p.137 %ppdf 152
therein by:
\begin{definition}
A right \emph{Hilbert module} $E$ over a $C^*$-algebra $A$ is a Banach space endowed with a right action of $A$ together with a map $\langle \xi, \eta \rangle _A \colon E \times E \to A$ which satisfies certain conditions (``$A$-valued inner product'').
\end{definition}
If $E$ is a Hilbert module over a $C^*$-algebra $A$, we say that a function $T \colon E \to E$ is an \emph{adjointable operator} if we can find a function $T^* \colon E \to E$ s.t. for all $\xi, \eta \in E$,
$$ \langle T \xi, \eta \rangle _A = \langle \xi, T^* \eta \rangle _A .$$
The set of adjointable operators on $E$, denoted $\OpLin(E)$, is a $C^*$-algebra (\cite{Black} II.7.2.1). %ppdf 156
Inside this $C^*$-algebra lies the ideal $\OpCpct (E)$ (\cite{Black} II.7.2.4) %ppdf 157
of ``compact operators on $E$'', defined as the closed linear span of operators $\Theta_{\eta, \zeta}(\xi) = \eta \langle \zeta, \xi \rangle _A$. 

 In this article, we need a stronger structure called \emph{$C^*$-correspondence} (\cite{Black} II.7.4.4 -- also called ``Hilbert bimodule'' in \cite{PiCrG}).
\begin{definition}
\label{Def:C*corr}
A \emph{$C^*$-correspondence} $E$ over a $C^*$-algebra $A$ is a right Hilbert module $E$ over $A$, together with a left-action of $A$, \textsl{i.e.} a $*$-morphism $\phi \colon A \to \OpLin(E)$.

When the algebra $A$ is unital, we further require that $\phi(1_A) = \id_E$. 
\end{definition}
In the case of a finitely generated (f.g.) right $A$-module $E$, we call \emph{frame} a finite family $(\xi_i) \in E$ s.t. for all $\xi \in E$,
\begin{equation}
\label{Eqn:Frame}
 \sum_{i} \xi_i \langle \xi_i, \xi \rangle _A = \xi .
\end{equation}
\begin{remarque}
\label{Rem:CompRep}
In Def. 3.8 of \cite{PiCrG}, an ideal $I$ is defined by % 
$ 
I := \phi^{-1}(\OpCpct (E))
$. This ideal is used for the six-term exact sequence \eqref{Eqn:6Terms} of the present article. When the algebra $A$ is unital and a frame exists, a consequence of \eqref{Eqn:Frame} and our Definition \ref{Def:C*corr} is that $1_A \in I$ and thus $I = A$.
\end{remarque}
Here, we use the characterisation of Pimsner algebras given by Theorem 3.12 in \cite{PiCrG} as definition. We further adapt this definition to the case we consider here, \textsl{i.e.} the $C^*$-algebra $A$ is unital and the $C^*$-correspondence possesses a (finite) frame (see Remark 3.9 p.202 in \cite{PiCrG}).

\begin{definition}
\label{Def:CompRep}
Given a $C^*$-correspondence $E$ over $A$ with a frame $(\xi_i)$, a \emph{compatible representation} of $E$ on a $C^*$-algebra $B$ is a $*$-homomorphism $\pi \colon A \to B$ and a linear map $t \colon E \to B$ s.t.
\begin{enumerate}[label=(\roman*)]
\item \label{Pt:1}
$t(\xi)^* t(\eta) = \pi( \langle \xi, \eta \rangle _A) $ for all $\xi, \eta \in E$,
\item \label{Pt:2}
 $\pi(a) t(\xi) = t( \phi(a) \xi) $ for all $a \in A$, $\xi \in E$.
\item \label{Pt:3}
using the frame $(\xi_i)$, $ \sum_{i} t(\xi_i) t(\xi_i)^* = \pi(1)$.
\end{enumerate}
\end{definition}

\begin{remarque}
Regarding this definition, we observe:
\begin{itemize}
\item
Conditions \ref{Pt:1} and \ref{Pt:2} imply that $t(\xi a) = t(\xi) \pi(a)$ (see the discussion after Def. 2.1 in \cite{Katsura04}).
\item%
A quick computation shows that if condition \ref{Pt:3} is satisfied for one frame, then it holds for all frames.% Calcul G11
\end{itemize}
\end{remarque}

\begin{definition}
The \emph{(Cuntz-)Pimsner} algebra $\CAlg_E$ generated by the $C^*$-correspondence $E$ over $A$ is the universal $C^*$-algebra generated by compatible representations of $E$. 

In other words, there is a compatible representation of $E$ on $\CAlg_E$ and any compatible representation $(\pi, t)$ of $E$ on a $C^*$-algebra $B$ induces a unique $C^*$-algebra morphism $A \to B$ which factors $(\pi, t)$.
\end{definition}
To define such a universal algebra, all generators must have finite bounds on their norms (\cite{Black}, II.8.3). For elements $a \in A$, the property is obvious since $\pi$ is contracting. For any $\xi \in E$, we have 
$$
\| t(\xi) \|^2 = \| t(\xi)^* t(\xi) \| = \| \pi(\langle \xi, \xi \rangle _A) \| \leqslant \| \langle \xi, \xi \rangle _A \| .
$$

\begin{prop}
\label{Prop:PimsnerAlg}
If $\brep$ satisfies Condition \ref{Cond:Entrelac}, then 
\begin{itemize}
\item
we can find an isometry $\nu \in \Hh^{N+1} (\Hh^*)^N \cap \Falg $,
\item
 $\Falg $ is isomorphic to the Pimsner algebra $\CAlg_E$ generated by the natural $C^*$-cor\-respon\-dence $E = \nu^* \InvJ^\brep $ over $A = \InvJ^\brep $.
\end{itemize}
\end{prop}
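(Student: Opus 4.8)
The plan is to establish the two claims in order, the first being a necessary preliminary for the second. For the existence of the isometry $\nu$, I would start from Condition \ref{Cond:Entrelac}: $\brep^N$ is contained in $\brep^{N+1}$, so there is an isometric intertwiner $\Hh^N \to \Hh^{N+1}$ of the representations $\brep^N$ and $\brep^{N+1}$. By Lemma \ref{Lem:InterpEntrelac}, such an intertwiner is precisely an element $\nu$ of $\Falg \cap \Hh^{N+1}(\Hh^*)^N$, and "isometric" translates into $\nu^* \nu = 1$ under the identification of $\Hh^{N+1}(\Hh^*)^N$ with operators $\Hh^N \to \Hh^{N+1}$ inside $\CAlg_d$. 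This gives the first bullet immediately.

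Next I would set up the $C^*$-correspondence. Put $A = \InvJ^\brep$ and $E = \nu^* \InvJ^\brep \subseteq \CAlg_d$ (note $\nu^* \in \Hh^N(\Hh^*)^{N+1}$, so $E$ sits in total gauge $-1$). The right $A$-action is right multiplication; the $A$-valued inner product is $\langle \xi, \eta\rangle_A := \xi^* \eta$, which lands in $A$ because $\xi^*\eta$ has total gauge $0$ and is $\brep$-fixed, and which is $A$-linear and positive with $\langle\xi,\xi\rangle_A = 0 \iff \xi = 0$; completeness of $E$ in the induced norm follows since $A$ is already complete and $\nu^*$ is an isometry (so $\|\nu^* a\| = \|a\|$ and $E$ is a closed copy of $A$). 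The left action $\phi\colon A \to \OpLin(E)$ is $\phi(a)\xi := \rho(a)\xi$ using the canonical endomorphism; one checks $\rho(a)\nu^* b \in E$ because $\rho(a)\nu^* = \nu^* a$ when $a \in \InvJ^{\brep,\ell}$ (from \eqref{Eqn:CommRho}: $\nu^*$ has $n = -1$, so $a\,\nu^* = \nu^*\rho(a)$... — more carefully, one uses that $\nu^* \InvJ^{\brep}$ is exactly the set closed under left multiplication by $\InvJ^\brep$ via $\rho$, and $\phi(1) = \id_E$ since $\rho(1) = 1$). A frame is obtained from a frame/matrix-unit structure: writing $\{S_{i_1}\cdots S_{i_{N+1}} S_{j_1}^*\cdots S_{j_N}^*\}$-type generators, the family $\xi_i := \nu^* S_i$ (or, more robustly, $\xi_i := \nu^* v_i$ for $v_i$ a frame of the free module $\Hh^1$ inside $\CAlg_d$, i.e. the generators $S_i$) satisfies $\sum_i \xi_i\langle\xi_i,\xi\rangle_A = \sum_i \nu^* S_i S_i^* \nu \nu^* (\nu\xi) = \nu^*(\sum_i S_iS_i^*)\nu\nu^*(\nu\xi) = \xi$ using $\sum_i S_iS_i^* = 1$ and $\nu\nu^* \xi = \xi$ for $\xi \in \nu^*\InvJ^\brep$... — this last point needs $\nu\nu^*$ to act as the identity on $E$, which holds because $\nu\nu^*\nu^* a = \nu^* a$ follows from $\nu^*\nu = 1$.

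With $(A, E)$ in hand, I would build a compatible representation $(\pi, t)$ of $E$ on $\Falg$: take $\pi\colon A \hookrightarrow \Falg$ the inclusion and $t\colon E \to \Falg$ also the inclusion. Conditions \ref{Pt:1} and \ref{Pt:2} of Definition \ref{Def:CompRep} are then just $\xi^*\eta = \langle\xi,\eta\rangle_A$ and $\rho(a)\xi = a\xi$ inside $\CAlg_d$ (the latter using $a\,\xi = \rho(\xi\text{'s body})\cdots$, i.e. \eqref{Eqn:CommRho} applied to $\xi = \nu^* a'$), and \ref{Pt:3} is $\sum_i \xi_i\xi_i^* = \sum_i \nu^* S_iS_i^*\nu = \nu^*\nu = 1 = \pi(1)$. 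By universality this yields a surjective (since $\Falg$ is generated as a $C^*$-algebra by $\InvJ^\brep$ together with $\nu^*$ — one should justify this: every algebraic element of $\Falg$ decomposes into gauge-homogeneous pieces, and a gauge-homogeneous element of gauge $-k$ lies in $(\nu^*)^k \InvJ^\brep \cdot(\text{stuff})$, hence in the image) $*$-homomorphism $\CAlg_E \to \Falg$. For injectivity I would invoke the gauge-invariant uniqueness theorem for Pimsner algebras (Theorem 3.12 of \cite{PiCrG}, in the form applying when $I = A$ as recorded in Remark \ref{Rem:CompRep}): $\CAlg_E$ carries its own gauge action, the map intertwines it with $\gamma$ on $\Falg$, and $\pi$ is injective on $A$, so the induced map is injective.

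\textbf{Main obstacle.} The routine-looking but genuinely delicate step is verifying that $\Falg$ is generated by $A = \InvJ^\brep$ and the single isometry $\nu$ (equivalently $\nu^*$) — i.e. that every gauge-homogeneous element of $\Falg$ of gauge $-k$ (resp. $+k$) is a norm-limit of elements of the form $(\nu^*)^k a$ (resp. $a\,\nu^k$) with $a \in \InvJ^\brep$, so that the compatible representation is actually \emph{onto}. This requires knowing that the inclusion $\InvJ^\brep \hookrightarrow (\Falg)^{(-k)}$ given by $a \mapsto (\nu^*)^k a$ is surjective after taking closures, which in turn rests on Condition \ref{Cond:Entrelac} guaranteeing enough room (via Lemma \ref{Lem:AlgApprox}, reducing to algebraic elements, plus a stabilisation argument pushing intertwiners $\brep^\ell \to \brep^{\ell+k}$ through the embedding $\brep^N \leqslant \brep^{N+1}$). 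The other checks — the correspondence axioms, the frame identity, conditions \ref{Pt:1}--\ref{Pt:3} — are short computations inside $\CAlg_d$ using $\sum_i S_iS_i^* = 1$, $\nu^*\nu = 1$, and \eqref{Eqn:CommRho}.
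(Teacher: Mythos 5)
Your overall architecture coincides with the paper's: obtain $\nu$ from Lemma \ref{Lem:InterpEntrelac}, take $E=\nu^*\InvJ^\brep$ with inner product $\langle\xi,\eta\rangle_{\InvJ^\brep}=\xi^*\eta$, represent $(A,E)$ in $\Falg$ by inclusions, deduce injectivity from a gauge-invariant uniqueness theorem (the paper invokes Theorem 6.4 of Katsura rather than \cite{PiCrG}) and surjectivity from the decomposition into gauge-homogeneous elements. However, two of your verifications are wrong as stated. First, the left action: you set $\phi(a)\xi:=\rho(a)\xi$ and then need $\rho(a)\xi=a\xi$ for condition \ref{Pt:2}; this identity is false (already in $\CAlg_2$ with $\nu=S_1$ and $a=S_1S_1^*$ one checks $\rho(a)\nu^*\neq a\nu^*$). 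The paper simply defines $\phi(a)\xi:=a\xi$, which lies in $E$ because $E$ is exactly the set of $\xi\in\Falg$ with $\gamma_z(\xi)=\overline{z}\,\xi$ (any such $\xi$ equals $\nu^*(\nu\xi)$ with $\nu\xi\in\InvJ^\brep$, using only $\nu^*\nu=1$); with this definition \ref{Pt:2} is a tautology. Second, your frame $\xi_i=\nu^*S_i$ does not consist of elements of $E$ (the $S_i$ are not $\CQG$-invariant), and the identity $\nu\nu^*\xi=\xi$ you invoke fails on $E$, since $\nu\nu^*$ is a proper projection. The correct and simpler frame is the single element $\nu^*$: one has $\nu^*\langle\nu^*,\xi\rangle_{\InvJ^\brep}=\nu^*\nu\xi=\xi$, and condition \ref{Pt:3} then reads $t(\nu^*)t(\nu^*)^*=\nu^*\nu=1=\pi(1)$.

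Finally, the step you single out as the ``main obstacle'' --- surjectivity --- is in fact the easy part and requires no stabilisation argument: if $T\in\Falg$ is gauge-homogeneous of total gauge $-k$ (resp.\ $+k$), then $T=(\nu^*)^k(\nu^kT)$ (resp.\ $T=(T(\nu^*)^k)\nu^k$) because $\nu^*\nu=1$, and $\nu^kT$ (resp.\ $T(\nu^*)^k$) is gauge-invariant and $\CQG$-fixed, hence in $\InvJ^\brep$. Combined with Lemma \ref{Lem:AlgApprox} (algebraic elements are dense and are finite sums of homogeneous ones), this shows every algebraic element is in the image of $\CAlg_E\to\Falg$, which is the paper's argument. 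So the route is the same as the paper's, but you must repair the definition of the left action and of the frame, and the deferred surjectivity discussion should be replaced by the one-line computation above.
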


\begin{proof}
Using Condition \ref{Cond:Entrelac}, we can find a norm preserving map $\psi \colon \Hh^{\ell} \to \Hh^{\ell +1}$ intertwining the $\CQG $-action. In particular, $\psi^* \psi = \id_{\Hh^{\ell}}$. From Lemma  \ref{Lem:InterpEntrelac}, we see that we can interpret $\psi$ as an element $\nu \in \Hh^{N+1} (\Hh^*)^N \cap \Falg $. Moreover, $\nu^* \nu$ corresponds to $\psi^* \psi$ and thus $\nu^* \nu = 1$.

\smallbreak

We now introduce $E = \nu^* \InvJ^\brep $ and show that this is a $C^*$-correspondence over $\InvJ^\brep $. It is clear that $E$ is a right $\InvJ^\brep $-module. We can identify this set with all elements $\xi$ in $\Falg $ which satisfy:
\begin{equation}
\label{Eqn:EltHomDeg1}
 \gamma_z(\xi) = \overline{z} \xi .
\end{equation}
Indeed, this relation holds for all elements in $E$. Inversely, if $\xi \in \Falg $ satisfies \eqref{Eqn:EltHomDeg1}, then $\xi = \nu^* (\nu \xi)$ and $\gamma_z(\nu \xi) = \gamma_z(\nu) \gamma_z(\xi) = z \nu \overline{z} \xi = \nu \xi$ thus $\nu \xi \in \InvJ^\brep $. By very much the same method, we show that all gauge-homogeneous elements $T \in \Falg $ can be written either $(\nu^*)^k a$ or $a \nu^k$ for some $k \in \N$ and $a \in \InvJ^\brep $. 

\smallbreak

We define the scalar product on $E$ by the natural formula:
$$ \langle \xi, \eta \rangle_{\InvJ^\brep } = \xi^* \eta .$$
We see that $\xi^* \eta$ is indeed in $\InvJ^\brep $ by applying the gauge action. It is readily checked that this scalar product satisfies all conditions for a Hilbert module (see II.7 in \cite{Black}). Note that this module has a single generator $\nu^* $ which also defines a frame of $E$: for all $\xi \in E$,
\begin{equation}
\label{Eqn:DefPiScal}
\nu^* \langle \nu^* , \xi \rangle _{\InvJ^\brep } = \nu^* \nu \xi = \xi .
\end{equation}
We further define a $C^*$-correspondence structure on $E$ using the multiplication in $\Falg $. Indeed, for any $a \in \InvJ^\brep $, $\xi \in \nu^* \InvJ^\brep $, $ a \xi $ is homogeneous of total gauge $-1$.

We now prove that there is a compatible representation of $E$ on $\Falg $ and then apply the gauge-invariant uniqueness Theorem 6.4 of \cite{Katsura04} to prove that $\Falg $ is isomorphic to the Pimsner algebra $\CAlg_E$.

This is purely formal: the natural $C^*$-algebra morphism $\pi \colon \InvJ^\brep \to \Falg $ is injective. For the linear map $t$, we set $t(\nu^* a) = \nu^* a$. It is clear from \eqref{Eqn:DefPiScal} that we get a compatible representation and thus a map $\CAlg_E \to \Falg $. This representation clearly admits a gauge action (in the sense of Def. 5.6 of \cite{Katsura04}), which is just $\gamma$. Therefore, Theorem 6.4 of \cite{Katsura04} applies, proving that $\CAlg_E \to \Falg $ is injective.

\smallbreak

To show surjectivity, we prove that all algebraic elements are reached and rely on Lemma \ref{Lem:AlgApprox}. Algebraic elements are finite sums of homogeneous elements and the latter are of the form $(\nu^*)^k a$ or $a \nu^k$ for some $k \in \N$ and $a \in \InvJ^\brep $. But these are reached by $\big( t(\nu)^* \big)^k \pi(a)$ and $\pi(a)t(\nu)^k$ respectively. Therefore, the image of $\CAlg_E \to \Falg $ is dense, proving surjectivity.
\end{proof}

To compute the $K$-theory of $\Falg $, we start by describing the algebra $\InvJ^\brep $. By Lemma \ref{Lem:AlgApprox}, this is an AF-algebra determined by $\lim_{\to } \InvJ^{\brep, \ell}$ and the induced Bratteli's diagram (see for instance \cite{RordamStormer} p.13). In particular, $K_1(\InvJ^\brep) = 0$. To evaluate $K_0(\InvJ^\brep )$, fix $\ell$ and consider the decomposition into irreducible representations of $\brep^\ell$:
\begin{equation}
\label{Eqn:brepEll}
\brep ^\ell \simeq \bigoplus d_t (t) ,
\end{equation}
where all irreducible representations $(t)$ are in $\CatInd$ (by definition) and only a finite number of $d_t$ are nonzero. From Lemma \ref{Lem:InterpEntrelac} together with the Schur's lemma for representations of $\CQG $, we get that $\InvJ^{\brep , \ell}$ is isomorphic to 
\begin{equation}
\label{Eqn:InvJEll}
 \InvJ^{\brep , \ell} \simeq \bigoplus M_{d_t} (\C) .
\end{equation}
Hence, the different matrix components of $\InvJ^\brep $ are determined by the irreducible representations appearing in $\alpha^\ell$. The connecting maps $\varphi_\ell \colon \InvJ^{\brep, \ell} \to \InvJ^{\brep, \ell+1}$ are fully determined by the fusion rules $(t) \otimes \brep $ for $(t)$ appearing in \eqref{Eqn:brepEll}. Indeed, if 
$$ (t) \otimes \brep = m_1 (\tau_1) \oplus \cdots \oplus m_p (\tau_p) $$
then the multiplicity of $\varphi_\ell$ between the $(t)$ component of $\InvJ^{\brep, \ell}$ and the $(\tau_p)$ component of $\InvJ^{\brep, \ell+1}$ is $m_p$. For a more concrete illustration, refer to Section \ref{SSec:Example} of the present paper. Since $K_0$ is continuous, % [WeggeOlsen], p.116
we recover $K_0(\InvJ^\brep ) = \lim_{\to } K_0(\InvJ^{\brep , \ell})$ and thus

\begin{prop}
\label{Prop:KthInvJ}
The $K$-theory of $\InvJ^\brep $ is fully determined by the fusion rules on $\CQG $. In other words, it only depends on $\CQG $ up to $R^+$-isomorphism (Def. \ref{Def:IsomRepCat}).
\end{prop}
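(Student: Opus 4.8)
The plan is to package the discussion preceding the statement into a clean argument. The starting point is Lemma \ref{Lem:AlgApprox}: $\InvJ^\brep$ is an AF-algebra with $\InvJ^\brep = \lim_{\to} \InvJ^{\brep,\ell}$, so $K_1(\InvJ^\brep) = 0$ automatically and, by continuity of $K_0$ under inductive limits, $K_0(\InvJ^\brep) = \lim_{\to} K_0(\InvJ^{\brep,\ell})$. Thus it suffices to show that the whole inductive system $\big(K_0(\InvJ^{\brep,\ell}), K_0(\varphi_\ell)\big)_\ell$, together with the canonical order units, is expressible using only the fusion semiring $R^+(\CQG)$ and the class $[\brep]$.

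First I would record the $\ell$-th term. By \eqref{Eqn:brepEll}--\eqref{Eqn:InvJEll} we have $\InvJ^{\brep,\ell} \simeq \bigoplus_{(t)} M_{d_t}(\C)$, the sum running over the irreducibles $(t) \in \CatInd$ occurring in $\brep^\ell$ with multiplicity $d_t$; hence $K_0(\InvJ^{\brep,\ell}) \simeq \bigoplus_{(t)} \Z$ with one generator per such $(t)$ and order unit $(d_t)_{(t)}$. Both the index set of generators and the tuple $(d_t)$ are read off from the decomposition of $[\brep]^\ell \in R^+(\CQG)$ into irreducibles. Next I would identify the connecting maps: as recalled just before the statement, $K_0(\varphi_\ell)$ is the integer matrix whose entry from the $(t)$-summand to the $(\tau)$-summand is the multiplicity of $(\tau)$ in $(t) \otimes \brep$. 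These are fusion coefficients, hence again determined by $R^+(\CQG)$ and $[\brep]$. (At $\ell = 0$ one starts from $\brep^0 = \TrivRep$, $\InvJ^{\brep,0} = \C$, which $R^+$-isomorphisms respect since they preserve the unit.)

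Finally I would transport everything along an $R^+$-isomorphism. Given $\Phi \colon R^+(\CQG_1) \simeq R^+(\CQG_2)$ (Def. \ref{Def:IsomRepCat}) and a representation $\brep$ of $\CQG_1$, choose a representation $\brep'$ of $\CQG_2$ with $[\brep'] = \Phi([\brep])$. Since $\Phi$ restricts to a bijection between irreducibles that is compatible with the tensor product, it sends, for each $\ell$, the irreducibles occurring in $\brep^\ell$ bijectively (and with the same multiplicities) onto those occurring in $(\brep')^\ell$, and it intertwines tensoring by $\brep$ with tensoring by $\brep'$. Combined with the two previous steps this yields an isomorphism of inductive systems $\big(K_0(\InvJ^{\brep,\ell}), K_0(\varphi_\ell)\big) \simeq \big(K_0(\InvJ^{\brep',\ell}), K_0(\varphi'_\ell)\big)$ respecting order units, so in the limit $K_0(\InvJ^\brep) \simeq K_0(\InvJ^{\brep'})$ as ordered groups with order unit, while $K_1$ vanishes on both sides. (In fact this same data is exactly the Bratteli diagram of $\InvJ^\brep$, so the classification of AF-algebras even gives $\InvJ^\brep \simeq \InvJ^{\brep'}$ as $C^*$-algebras; only the $K$-theoretic statement is needed in the sequel.) There is no deep obstacle here; the one point requiring care is the bookkeeping that the generators of each $K_0$-term and the connecting matrices truly involve nothing beyond the fusion rules and $[\brep]$, which is exactly what \eqref{Eqn:brepEll}--\eqref{Eqn:InvJEll} and Lemma \ref{Lem:InterpEntrelac} (together with Schur's lemma) supply.
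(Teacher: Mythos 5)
Your argument is correct and is essentially the paper's own: the paper proves this proposition exactly by combining Lemma \ref{Lem:AlgApprox} (AF structure, $K_1=0$, continuity of $K_0$), the decomposition \eqref{Eqn:InvJEll} obtained from Lemma \ref{Lem:InterpEntrelac} and Schur's lemma, and the identification of the connecting maps with the fusion coefficients of $(t)\otimes\brep$. Your write-up merely makes explicit the transport along the $R^+$-isomorphism (cf.\ Remark \ref{Rem:brep}) and the order-unit bookkeeping, which the paper leaves implicit.
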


\begin{remarque}
\label{Rem:brep}
Of course, the $K$-theory of $\InvJ^\brep $ also depends on the choice of $\brep $. But if $\CQG$ and $\CQG'$ are $R^+$-isomorphic, then $\brep $ is sent by the isomorphism to some representation $\brep '$ of $\CQG '$, giving a meaning to the above proposition.
\end{remarque}

We have $K_0( \InvJ^{\brep , \ell}) \simeq \bigoplus_{t \in \CatInd ^{\ell}} \Z$ where $\CatInd ^{\ell}$ is the set of irreducible representations appearing in \eqref{Eqn:brepEll} with nonzero multiplicity. Hence, the elements of $K_0(\InvJ^{\brep , \ell})$ can be represented as formal sums $\sum_{t \in \CatInd ^{\ell}} n_t (t)$. It is then natural to consider $\Z[\CatInd]$, the formal sums on $\CatInd$ with integer coefficients. It is obvious that if $\beta$ and $\gamma$ are irreducible representations in $\CatInd$, then the irreducible representations appearing in the decomposition of $\beta \otimes \gamma$ are also in $\CatInd$. Thus there is a product in $\CatInd$ induced by the tensor product and $\Z[\CatInd]$ inherits a ring structure from it.

\smallbreak

If the ring $\Z[\CatInd ]$ is commutative -- it is the case for compact groups and their $R^+$-deformations -- further simplifications arise, involving the localised ring $\Z[\CatInd ]\left[ \frac{1}{\brep } \right]$. The definition of this set as inductive limit, together with the expression of the connecting maps $K_0(\varphi_\ell) \colon K_0(\InvJ^{\brep, \ell}) \to K_0(\InvJ^{\brep, \ell+1})$:
$$ K_0(\varphi_{\ell})\left(\sum_{} n_t (t) \right) = \sum_{}n_t \big( (t) \otimes \brep \big) $$
show that $K_0(\InvJ^\brep )$ can be realised as a submodule of $\Z[\CatInd ]\left[ \frac{1}{\brep } \right]$. More precisely (compare \cite{TheseWassermann}, p.103):
\begin{remarque}
\label{Rem:Fraction}
$K_0(\InvJ^\brep )$ is the set of all fractions $\left( \sum_{} n_t (t) \right) / \brep ^\ell$ where all $(t)$ in the sum appear with nonzero multiplicity in \eqref{Eqn:brepEll}.
\end{remarque}

Going back to the general case, we have the following:
\begin{thm}
\label{Thm:KthFalg}
If $\brep $ satisfies Property \ref{Cond:Entrelac}, the $K$-theory of $\Falg$ is
\begin{align}
\label{Eqn:KthFalg}
K_0(\Falg ) &= \coker(\id - [E]_*)
&
K_1(\Falg ) &= \ker(\id - [E]_*),
\end{align}
where  $[E]_*$ is the map induced by the $C^*$-correspondence $E$ using Kasparov product. If moreover $\Z[\CatInd ]$ is an integral domain and $\brep \neq \TrivRep$, then $K_1(\Falg ) = 0$.

\smallbreak

In any case, $K_*(\Falg )$ only depends on $\CQG $ up to $R^+$-isomorphism, in the sense of Remark \ref{Rem:brep}.
\end{thm}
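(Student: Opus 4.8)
\medskip
\noindent\emph{Proof plan.} The plan is to read off \eqref{Eqn:KthFalg} from Pimsner's six-term exact sequence. By Proposition~\ref{Prop:PimsnerAlg}, Condition~\ref{Cond:Entrelac} identifies $\Falg$ with the Pimsner algebra $\CAlg_E$ of the $C^*$-correspondence $E=\nu^*\InvJ^\brep$ over the unital algebra $A=\InvJ^\brep$, and $E$ carries the finite frame $\{\nu^*\}$; by Remark~\ref{Rem:CompRep} the Pimsner ideal is all of $A$, so $[E]$ is an honest class in $KK(A,A)$ and Theorem~4.9 of \cite{PiCrG} applies directly. It produces a cyclic six-term sequence linking $K_*(A)$, $K_*(A)$ and $K_*(\CAlg_E)$ in which two of the arrows between the copies of $K_*(A)$ are $\id-[E]_*$. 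By Lemma~\ref{Lem:AlgApprox} the algebra $A=\InvJ^\brep$ is AF, so $K_1(A)=0$; substituting this into the sequence reads off $K_0(\CAlg_E)\cong\coker(\id-[E]_*)$ and $K_1(\CAlg_E)\cong\ker(\id-[E]_*)$, and carrying this back along $\CAlg_E\cong\Falg$ gives \eqref{Eqn:KthFalg}.

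The second step is to make $[E]_*$ concrete. Writing $K_0(A)=\lim_{\to}K_0(\InvJ^{\brep,\ell})$ as in the proof of Proposition~\ref{Prop:KthInvJ}, with connecting maps governed by the fusion rule $(t)\mapsto(t)\otimes\brep$, I would evaluate the interior Kasparov product with $[E]$ on the classes of the minimal projections in the matrix blocks of $\InvJ^{\brep,\ell}$ and check that it reproduces exactly the endomorphism of the Bratteli limit induced by $(t)\mapsto(t)\otimes\brep$ --- equivalently, in the commutative case and under Remark~\ref{Rem:Fraction}, $[E]_*$ is multiplication by $[\brep]$ on $\Z[\CatInd][\tfrac{1}{\brep}]$. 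Since the groups $K_0(\InvJ^{\brep,\ell})$, the connecting maps, and $[E]_*$ are then all manufactured solely from the fusion semiring $R^+(\CQG)$, an $R^+$-isomorphism sends $\brep$ to some $\brep'$ (Remark~\ref{Rem:brep}) and intertwines all of them; hence $\ker(\id-[E]_*)$ and $\coker(\id-[E]_*)$, and therefore $K_*(\Falg)$, depend on $\CQG$ only up to $R^+$-isomorphism.

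For the vanishing of $K_1$, suppose $\Z[\CatInd]$ is an integral domain and $\brep\neq\TrivRep$. By Remark~\ref{Rem:Fraction} the group $K_0(A)$ embeds into $\Z[\CatInd][\tfrac{1}{\brep}]$, which is again an integral domain, being the localisation of the domain $\Z[\CatInd]$ at the multiplicative set generated by the nonzero element $[\brep]$; under this embedding $\id-[E]_*$ becomes multiplication by $1-[\brep]$. Since $\brep\neq\TrivRep$ we have $[\brep]\neq[\TrivRep]=1$ in $\Z[\CatInd]$, so $1-[\brep]\neq0$ and is therefore a non-zero-divisor in $\Z[\CatInd][\tfrac{1}{\brep}]$; thus $\id-[E]_*$ is injective, first on $\Z[\CatInd][\tfrac{1}{\brep}]$ and a fortiori on $K_0(A)$, whence $K_1(\Falg)=\ker(\id-[E]_*)=0$.

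I expect the only non-routine point to be the middle step: confirming that the abstract $KK$-class $[E]$ acts on $K_0(\InvJ^\brep)$ precisely as the ``tensor with $\brep$'' shift of the Bratteli diagram. This identification is what translates Pimsner's abstract sequence into the explicit fusion-rule computation and what makes the $R^+$-invariance manifest; the remaining steps are routine manipulation of the six-term sequence together with elementary commutative algebra in the localised fusion ring.
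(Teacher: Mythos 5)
Your proof follows the same route as the paper's: Pimsner's Theorem~4.9 applied via Proposition~\ref{Prop:PimsnerAlg} and Remark~\ref{Rem:CompRep}, the collapse of the six-term sequence using $K_1(\InvJ^\brep)=0$, an explicit identification of $[E]_*$ on the Bratteli limit, and the integral-domain argument for $K_1$. The one genuine error sits in the step you yourself single out as the crux: $[E]_*$ is \emph{not} the endomorphism induced by $(t)\mapsto(t)\otimes\brep$, i.e.\ it is not multiplication by $[\brep]$ in the picture of Remark~\ref{Rem:Fraction}. Computing with the frame $\{\nu^*\}$, a projection $e=(\langle\xi_i,\xi_j\rangle)_{i,j}$ over $\InvJ^{\brep,\ell}$ is sent to $f=(\nu e_{ij}\nu^*)_{i,j}$ over $\InvJ^{\brep,\ell+1}$, which is the image of $e$ under the \emph{equivariant} embedding $\Hh^{\ell}\hookrightarrow\Hh^{\ell+1}$ and therefore decomposes into the \emph{same} irreducibles with the \emph{same} multiplicities, one level higher. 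In the fraction realisation this reads $[E]_*\bigl(x/\brep^{\ell}\bigr)=x/\brep^{\ell+1}$, i.e.\ $[E]_*$ is \emph{division} by $[\brep]$, as the $SU_q(2)$ computation $[E]_*(P(T)/T^n)=P(T)/T^{n+1}$ confirms. Multiplication by $[\brep]$ is in general not even a well-defined endomorphism of the subgroup $K_0(\InvJ^\brep)\subseteq\Z[\CatInd]\left[\frac{1}{\brep}\right]$: for $SU_q(2)$ it sends $1=T^n/T^n$ to $T$, which is not of the form $P(T)/T^m$ with $\deg P\leqslant m$. So the verification you propose in your ``middle step'' would fail as stated.

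That said, the three assertions of the theorem survive this slip: the exact-sequence identities \eqref{Eqn:KthFalg} require no formula for $[E]_*$; the kernel condition $x=[E]_*x$ becomes $x\brep=x$, i.e.\ $x([\brep]-1)=0$, which still forces $x=0$ in an integral domain when $\brep\neq\TrivRep$ (your localisation argument goes through with $1-[\brep]^{-1}$ in place of $1-[\brep]$); and the $R^+$-invariance only uses that $[E]_*$ is manufactured from the fusion semiring, which is equally true of the correct map. But the correct direction of $[E]_*$ is essential for the cokernel computations in the examples, so you should carry out the frame computation and record $[E]_*$ as the level shift rather than as tensoring by $\brep$.
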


\begin{proof}
Proposition \ref{Prop:PimsnerAlg} (and its proof) ensures that we can apply Theorem 4.9 of \cite{PiCrG} to $\Falg $. Consider the first six-term exact sequence obtained from this theorem. We can substitute $B \rightsquigarrow \C$ and $I \rightsquigarrow \InvJ^\brep $ -- because of Remark \ref{Rem:CompRep} -- and find:
\begin{equation}
\label{Eqn:6Terms}
\xymatrix@C=1.5cm{
K_0(\InvJ^\brep ) \ar[r]^-{\id -  [E]_*} & K_0(\InvJ^\brep ) \ar[r]^-{\iota_*} & K_0(\Falg ) \ar[d]^{\partial } \\
K_1(\Falg ) \ar[u]^-\partial  & K_1(\InvJ^\brep ) \ar[l]_-{\iota_*} & K_1(\InvJ^\brep ) \ar[l]_-{\id -  [E]_*} .\\
}
\end{equation}
In the above, $[E]_*$ is the left Kasparov multiplication by the $KK$-theory class induced by the $C^*$-correspondence $[E] \in KK(\InvJ^\brep , \InvJ^\brep )$. 

Since $K_1(\InvJ^\brep ) = 0$, this sequence can actually be written:
$$
0 \to K_1(\Falg ) \to K_0(\InvJ^\brep ) \xrightarrow{1 - [E]_*} K_0(\InvJ^\brep ) \to K_0(\Falg ) \to 0,
$$
yielding the equalities \eqref{Eqn:KthFalg}.

To complete the computation, we want to describe the action of $[E]_*$ on $K_0(\InvJ^{\brep , \ell})$ for $\ell \geqslant N$ -- where $N$ is the integer of Proposition \ref{Prop:PimsnerAlg}. Explicitly, we interpret (the positive part of) $K_0(\InvJ^\brep )$ as f.g. right $\InvJ^\brep $-Hilbert modules $M_{\InvJ^\brep}$: if $(\xi_i)$ is a frame for $M_{\InvJ^\brep}$, then this module corresponds to the projection $e = (\langle \xi_i, \xi_j \rangle _{\InvJ^\brep})_{i,j} \in M_n(\InvJ^\brep)$. In the case of $E = \nu^* \InvJ^\brep  $, Equation \eqref{Eqn:DefPiScal} proves that $\nu^* $ is a frame.

\smallbreak

By definition, $[E]_*(M_{\InvJ^\brep}) = M \otimes _{\InvJ^\brep} E$ and it is easy to check that $(\xi_i \otimes \nu^*)$ is a frame for $M \otimes _{\InvJ^\brep} E$. % voir ma thèse, Lemme 1.20
Hence, the projection $e$ is sent to 
\begin{multline*}
f = \Big(\langle \xi_i \otimes \nu^*, \xi_j \otimes \nu^* \rangle_B \Big)_{i,j} = \Big(\langle \nu^*, \langle \xi_i , \xi_j \rangle _B \nu^* \rangle _B \Big)_{i,j} =\\
\Big(\nu e_{i,j} \nu^* \Big)_{i,j} \in M_n(\InvJ^{\brep , \ell+1}).
\end{multline*}
This element acts naturally on $\C^n \otimes \Hh^{\ell + 1}$. We know from Condition \ref{Cond:Entrelac} that there is an injection $\psi \colon \Hh^{\ell} \to \Hh^{\ell +1}$ which preserves the $\CQG $-action. $f$ is just the image of $e$ by this injection. In particular, $f$ decomposes into precisely the same number of components as $e$, with the same multiplicities. In other words, $[E]_*$ sends $\sum_{} n_t (t) \in K_0(\InvJ^{\brep , \ell})$ to $\sum_{} n_t (t) \in K_0(\InvJ^{\brep , \ell+1})$. Such a map exists because of Condition \ref{Cond:Entrelac}: indeed, if the irreducible representation $(t)$ appears in $\brep ^{\ell}$, then it also appears in $\brep ^{\ell +1}$.

\medbreak

The kernel of $1 - [E]_*$ is characterised as all elements $x \in \Z[\CatInd ]$ such that $x = [E]_* x $ or equivalently $x \brep = x$ -- where we chose a realisation of $x$ inside some $\InvJ^{\brep , \ell}$ for $\ell$ large enough. When $\Z[\CatInd ]$ is an integral domain -- this is realised for compact connected Lie groups (see Corollary 2.8 p.167 of \cite{BrockertomDieck}) and their $R^+$-deformations -- if $\brep \neq \TrivRep$ (trivial rep.) the only solution to this equation is $x = 0$, hence $K_1(\Falg ) = 0$.
\end{proof}

\section{Main results}
\label{Sec:Main}

We refer to \cite{RordamStormer} Def. 4.3.1 for the following:
\begin{definition}
A \emph{Kirchberg algebra} is a PI, simple, nuclear and separable $C^*$-algebra.%[RordamStormer] Def 4.3.1 ppdf 85
\end{definition}

Under some hypotheses, these algebras are fully classified by their $K$-theory, according to the Kirchberg-Phillips classification theorem. To make this assumption precise, we need the following definition, borrowed from \cite{Black}, V.1.5.4: %ppdf 428
\begin{definition}
\label{Def:UCTclass}
The (large) \emph{bootstrap class} or Universal Coefficient Theorem (\emph{UCT}) \emph{class} is the smallest class $\UCTclass$ of separable nuclear $C^*$-algebras s.t.
\begin{enumerate}[label=(\roman*)]
\item
$\C \in \UCTclass$;
\item
$\UCTclass$ is closed under inductive limit; %(iii) ppdf 388
\item
\label{Pt:UCT3}
if $0 \to J \to A \to A/J \to 0$ is an exact sequence, and two $C^*$-algebras $J, A$ or $A/J$ are in $\UCTclass$, then so is the third; %(v)
\item
$\UCTclass$ is closed under $KK$-equivalence.
\end{enumerate}
\end{definition}
To avoid technicalities, we treat the notion of ``$KK$-equivalence'' (also called ``$K$-equivalence'' in \cite{PiCrG}) as a ``black box''. We refer the interested reader to \cite{Black}, Sections V.1.4 and V.1.5 and references therein.

\smallbreak

We can now state Kirchberg-Phillips classification theorem, which first appeared in \cite{ClassThmKirchberg,ClassThmKirchbergKP} (see also Theorem 8.4.1 (iv) p.128 %ppdf 138
of \cite{RordamStormer}):
\begin{thm}
\label{Thm:ClassKirchberg}
Let $A$ and $B$ be unital Kirchberg algebras in the UCT class $\UCTclass$. $A$ and $B$ are isomorphic if and only if there are isomorphisms 
\begin{align*}
\alpha_0 &\colon K_0(A) \to K_0(B)
&
\alpha_1 &\colon K_1(A) \to K_1(B)
\end{align*}
with $\alpha_0([1_A]_0) = [1_B]_0$. For each such pair of isomorphisms there is an isomorphism $\varphi \colon A \to B$ with $K_0(\varphi) = \alpha_0$ and $K_1(\varphi) = \alpha_1$.
\end{thm}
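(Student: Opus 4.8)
The statement is a deep structural result, and a genuinely self-contained proof is well beyond what is reasonable here; the plan is to follow the Kirchberg--Phillips strategy, which reduces the classification to an \emph{existence} and a \emph{uniqueness} theorem for $*$-homomorphisms at the level of $KK$-theory, assembled by an Elliott-type approximate intertwining. I would write $\CAlg_\infty$ and $\CAlg_2$ for the two Cuntz algebras playing the auxiliary regulating role.

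First I would promote the $K$-theory data to an invertible element of $KK$. Since $A, B \in \UCTclass$, the Rosenberg--Schochet Universal Coefficient Theorem provides a natural short exact sequence
$$ 0 \to \operatorname{Ext}^1_{\Z}\big(K_*(A), K_{*+1}(B)\big) \to KK(A,B) \xrightarrow{\;\Gamma\;} \operatorname{Hom}\big(K_*(A), K_*(B)\big) \to 0 , $$
where $\Gamma$ sends a $KK$-class to its induced map on $K$-theory. As $\Gamma$ is surjective I may pick $x \in KK(A,B)$ with $\Gamma(x) = (\alpha_0,\alpha_1)$; since $\ker\Gamma$ is a square-zero ideal in the relevant Kasparov rings, any class lifting a $K$-theory isomorphism is automatically a $KK$-equivalence, so $x$ is invertible and, by construction, it realizes $(\alpha_0,\alpha_1)$ on $K$-theory --- in particular $[1_A]_0 \mapsto [1_B]_0$.

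Next, the technical heart: I would realize $x$ and $x^{-1}$ by unital $*$-homomorphisms. This rests on Kirchberg's absorption theorems --- every Kirchberg algebra $A$ satisfies $A \cong A \otimes \CAlg_\infty$, and $A \otimes \CAlg_2 \cong \CAlg_2$ for $A$ unital nuclear --- from which one extracts (i) an existence theorem, that every class in $KK(A,B)$ equals $KK(\phi)$ for some unital $*$-homomorphism $\phi \colon A \to B$ (built by approximating the $KK$-class by Cuntz pairs of completely positive maps and absorbing the ``defect'' into a copy of $\CAlg_\infty$ inside $B$), and (ii) a uniqueness theorem, that two unital $*$-homomorphisms $A \to B$ with the same $KK$-class are asymptotically unitarily equivalent. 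Applying (i) to $x$ and to $x^{-1}$ yields unital $\phi \colon A \to B$ and $\psi \colon B \to A$ with $KK(\psi\phi) = KK(\id_A)$ and $KK(\phi\psi) = KK(\id_B)$, whence by (ii) the compositions $\psi\phi$ and $\phi\psi$ are asymptotically unitarily equivalent to the respective identities. I would then run an Elliott two-sided approximate intertwining --- inductively conjugating these compositions toward the identity over an exhausting sequence of finite subsets --- to produce a $*$-isomorphism $\varphi \colon A \to B$ homotopic to $\phi$; homotopy invariance of $KK$ gives $KK(\varphi) = x$, hence $K_i(\varphi) = \alpha_i$, and $\varphi$ is unital. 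The converse implication is immediate, since a unital $*$-isomorphism induces $K$-theory isomorphisms carrying $[1_A]_0$ to $[1_B]_0$.

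The main obstacle is this last step: the existence and uniqueness theorems are not formal consequences of $KK$-theory but require the full force of Kirchberg's analysis of exact and purely infinite $C^*$-algebras (the $\CAlg_2$- and $\CAlg_\infty$-absorption theorems, the strongly self-absorbing character of $\CAlg_\infty$, and Phillips's study of asymptotic unitary equivalence and of the residual $KK^1$-obstruction). A complete argument is essentially the content of \cite{ClassThmKirchberg,ClassThmKirchbergKP}. Since in the present paper one only needs to \emph{apply} this theorem --- to identify $\Falg$ once it has been established to be a unital Kirchberg algebra lying in $\UCTclass$ with $K_1 = 0$ and a prescribed $K_0$ --- I would invoke it as stated, following \cite{RordamStormer}, rather than reprove it.
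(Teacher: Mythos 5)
The paper gives no proof of this theorem: it is imported verbatim from Kirchberg and Phillips via \cite{RordamStormer} (Theorem 8.4.1 (iv)), exactly as you ultimately propose to do. Your sketch of the underlying Kirchberg--Phillips strategy (UCT lifting of $(\alpha_0,\alpha_1)$ to a $KK$-equivalence using nilpotency of the $\operatorname{Ext}$ kernel, existence and uniqueness of unital $*$-homomorphisms realizing $KK$-classes via $\CAlg_\infty$-absorption, and an Elliott approximate intertwining) is an accurate outline of the standard argument, and deferring the analytic heart to the cited references is precisely what the paper does.
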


Bringing together Theorem \ref{Thm:PI1} and Proposition \ref{Prop:PimsnerAlg}, we get the following proposition:
\begin{prop}
\label{Prop:DualInvA}
Let $\brep $ be a representation of $\CQG $ which satisfies Conditions \ref{Cond:Contra}, \ref{Cond:Dim} and \ref{Cond:Entrelac}. The fixed point algebra $\Falg $ is both a Kirchberg algebra in the UCT class $\UCTclass$ and a Pimsner algebra.
\end{prop}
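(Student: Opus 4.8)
The plan is to harvest the four defining properties of a Kirchberg algebra from the results already established, to handle membership in the bootstrap class $\UCTclass$ by a standard extension argument built on the Pimsner presentation, and to read off the Pimsner description directly from Proposition \ref{Prop:PimsnerAlg}.

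Since $\brep$ satisfies Conditions \ref{Cond:Contra} and \ref{Cond:Dim}, Theorem \ref{Thm:PI1} already gives that $\Falg$ is simple and PI. Separability is immediate: $\CAlg_d$ is generated by the finitely many isometries $S_j$, hence is separable, and so is its $C^*$-subalgebra $\Falg$. For nuclearity I would use the Pimsner presentation: by Condition \ref{Cond:Entrelac} and Proposition \ref{Prop:PimsnerAlg}, $\Falg \cong \CAlg_E$ with coefficient algebra $A = \InvJ^\brep$, and $\InvJ^\brep$ is an AF-algebra by Lemma \ref{Lem:AlgApprox}, hence nuclear. One then invokes the nuclearity of Cuntz-Pimsner algebras over nuclear coefficient algebras, following Katsura \cite{Katsura04}; the side conditions required there are comfortably satisfied here, where $A$ is unital, the left action $\phi$ is unital, and $E$ is finitely generated with the single-element frame $\nu^*$. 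Thus $\Falg$ is PI, simple, nuclear and separable, i.e. a Kirchberg algebra.

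It remains to place $\Falg \cong \CAlg_E$ in the bootstrap class $\UCTclass$. The coefficient algebra $\InvJ^\brep$ is AF, hence an inductive limit of finite-dimensional $C^*$-algebras; each of these is $KK$-equivalent to $\C$ and so lies in $\UCTclass$, whence $\InvJ^\brep \in \UCTclass$ by closure under inductive limits. Next I would run the usual extension argument attached to the Pimsner construction: in our unital setting (with $I = \InvJ^\brep$, as in Remark \ref{Rem:CompRep}) there is a short exact sequence $0 \to J \to \Toep_E \to \CAlg_E \to 0$, where $\Toep_E$ is the Toeplitz-Pimsner algebra and $J$ is the ideal of compact operators on the full Fock module $\bigoplus_{n \geqslant 0} E^{\otimes n}$. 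The inclusion $\InvJ^\brep \hookrightarrow \Toep_E$ is a $KK$-equivalence, while $J$ is Morita equivalent, hence $KK$-equivalent, to $\InvJ^\brep$, the Fock module being full over $\InvJ^\brep$ since it contains $E^{\otimes 0} = \InvJ^\brep$ as a direct summand. Both $J$ and $\Toep_E$ therefore lie in $\UCTclass$ by closure under $KK$-equivalence (point (iv) of Definition \ref{Def:UCTclass}), and hence so does $\CAlg_E \cong \Falg$ by closure under extensions (point (iii)). Combined with the Pimsner identification of Proposition \ref{Prop:PimsnerAlg}, this establishes the proposition.

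The step I expect to demand the most care is the verification that $\CAlg_E \in \UCTclass$: one must cite the precise forms of the Pimsner $KK$-equivalences ($\InvJ^\brep \hookrightarrow \Toep_E$ and $J$ Morita equivalent to $\InvJ^\brep$) that are valid in the present unital, finitely generated case, rather than in the general setting where the ideal $\phi^{-1}(\OpCpct(E))$ clouds the picture. Nuclearity, though morally routine, likewise warrants a careful citation, as the general Cuntz-Pimsner statement carries hypotheses that should be checked explicitly -- which, given the very simple shape of $E = \nu^* \InvJ^\brep$, they are.
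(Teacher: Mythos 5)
Your proposal is correct and follows essentially the same route as the paper: simplicity and pure infiniteness from Theorem \ref{Thm:PI1}, separability from the structure of $\Falg$, nuclearity via Katsura's result for Pimsner algebras over the nuclear (AF) coefficient algebra $\InvJ^\brep$, and membership in $\UCTclass$ via the Toeplitz--Pimsner extension together with the $KK$-equivalences of both the Toeplitz algebra and the kernel with $\InvJ^\brep$ (the paper cites Theorem 4.4 and Lemma 4.7 of Pimsner, of which your Fock-module Morita-equivalence argument is the unital-case instantiation). No gaps.
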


\begin{proof}
We already know from Theorem \ref{Thm:PI1} that $\Falg $ is a simple and purely infinite algebra. It only remains to prove that it is separable, nuclear and in $\UCTclass$.

It is an immediate consequence of Lemma \ref{Lem:AlgApprox} that $\Falg $ is separable. Nuclearity comes from Corollary 7.4 of \cite{Katsura04}, which states that in the case of a Pimsner algebra, if the coefficient algebra (here $\InvJ^\brep$) is nuclear, then so is the Pimsner algebra. In our case, Lemma \ref{Lem:AlgApprox} ensures that $\InvJ^\brep$ is AF and thus nuclear -- this is a consequence of Proposition 2.1.2 of \cite{RordamStormer}. 

Concerning $\UCTclass$, we have a short exact sequence of $C^*$-algebras, similar to Pimsner-Voiculescu exact sequence (see Section 8 of \cite{Katsura04} p.386 or Theorem 3.13 of \cite{PiCrG}):
$$ 0 \to C^\brep \to \Toep_E^\brep \to \Falg  \to 0 .$$
Observe, regarding the algebras in this extension, that:
\begin{itemize}
\item
Theorem 4.4 of  \cite{PiCrG} shows that $\Toep_E^\brep$ is $KK$-equivalent to the coefficient algebra of the Pimsner algebra, here $\InvJ^\brep $. 
\item
Lemma 4.7 of \cite{PiCrG} proves a $KK$-equivalence between $C^\brep$ and his ideal $I$, which we identified with $\InvJ^\brep$ in Remark \ref{Rem:CompRep}.
\end{itemize}
Moreover, $\InvJ^\brep$ is AF and thus in $\UCTclass$. % consequence of IV.3.1.15 [Black] ppdf 388 (consequence of (i), then (ii) [matrices algebras] and finally (iii) [inductive limits thereof]
In this situation, point \ref{Pt:UCT3} of Def. \ref{Def:UCTclass} above ensures that $\Falg $ is in $\UCTclass$.% ppdf 428
\end{proof}

Proposition \ref{Prop:DualInvA} enables us to apply Theorem \ref{Thm:ClassKirchberg} to $\Falg$ and thus get our main theorem:
\begin{thm}
\label{Thm:Main}
Let $\brep $ be a representation of $\CQG $. If $\brep $ satisfies Properties \ref{Cond:Contra}, \ref{Cond:Entrelac} and \ref{Cond:Entrelac}, then up to $C^*$-isomorphism, the fixed point algebra $\Falg $ only depends on the $R^+$-isomorphism class of $\CQG $, in the sense of Remark \ref{Rem:brep}.
\end{thm}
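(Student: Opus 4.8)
The plan is to combine the three main results established so far --- the classification of $\Falg$ up to $C^*$-isomorphism via $K$-theory (Theorem \ref{Thm:ClassKirchberg} applied through Proposition \ref{Prop:DualInvA}) and the $R^+$-invariance of $K_*(\Falg)$ (Theorem \ref{Thm:KthFalg}) --- so that the theorem essentially reduces to bookkeeping about what an $R^+$-isomorphism transports. First I would set up the situation: let $\CQG$ and $\CQG'$ be $R^+$-isomorphic, with semiring isomorphism $\Phi \colon R^+(\CQG) \to R^+(\CQG')$, and let $\brep$ be a representation of $\CQG$ satisfying \ref{Cond:Contra}, \ref{Cond:Dim}, \ref{Cond:Entrelac}. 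Put $\brep' := \Phi(\brep)$, which is (a representative of) a representation of $\CQG'$ of the same dimension $d$, since $\Phi$ preserves addition and hence the dimension functional on $R^+$. Write $\Falg$ and $\Falg'$ for the two fixed point algebras inside $\CAlg_d$.

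The next step is to check that $\brep'$ still satisfies Conditions \ref{Cond:Contra}, \ref{Cond:Dim}, \ref{Cond:Entrelac}. This is where $\Phi$ does the work: all three conditions are phrased purely in terms of fusion data. Condition \ref{Cond:Dim} (``$\brep$ is not a single irreducible'') is preserved because $\Phi$ is a bijection on irreducibles compatible with $+$. Condition \ref{Cond:Entrelac} (``$\brep^N \leqslant \brep^{N+1}$'') is preserved because $\Phi$ is a $\otimes$-homomorphism and the partial order $\leqslant$ on $R^+$ is defined by subtractibility in the semiring, hence transported by $\Phi$. Condition \ref{Cond:Contra} is similar: the set $\CatInd$ of irreducibles occurring in the $\brep^\ell$ is sent bijectively onto $\CatInd[\brep']$, and ``$\beta \otimes \beta'$ contains a nonzero invariant vector'' means the trivial representation (characterised inside $R^+$ as the unique irreducible multiplicative unit) occurs in $\beta \otimes \beta'$, a condition preserved by $\Phi$. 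So $\Falg'$ is again a unital Kirchberg algebra in the UCT class by Proposition \ref{Prop:DualInvA}.

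Then I would invoke Theorem \ref{Thm:KthFalg} --- more precisely the explicit description it gives. That theorem shows $K_*(\Falg)$ is computed as cokernel and kernel of $\id - [E]_*$ acting on $K_0(\InvJ^\brep) \subseteq \Z[\CatInd][\frac{1}{\brep}]$, and by Proposition \ref{Prop:KthInvJ} and the remarks following it, $K_0(\InvJ^\brep)$ together with the map $[E]_*$ (which is right tensoring by $\brep$) depends only on the fusion ring $\Z[\CatInd]$ with the distinguished element $\brep$. Hence $\Phi$ induces a ring isomorphism $\Z[\CatInd] \cong \Z[\CatInd[\brep']]$ sending $\brep \mapsto \brep'$, which in turn gives isomorphisms $\alpha_i \colon K_i(\Falg) \to K_i(\Falg')$ intertwining the respective $\id - [E]_*$ maps. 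One must also check $\alpha_0([1_{\Falg}]_0) = [1_{\Falg'}]_0$: the class of the unit corresponds, at the bottom level $\ell = 0$, to $(\TrivRep) = \brep^0$ in $\Z[\CatInd]$, and $\Phi$ sends $\TrivRep$ to $\TrivRep'$, so the unit is preserved. Finally, applying Theorem \ref{Thm:ClassKirchberg} to $\Falg$ and $\Falg'$ --- both unital Kirchberg algebras in $\UCTclass$ by Proposition \ref{Prop:DualInvA}, with a unit-preserving pair of $K$-theory isomorphisms --- yields a $C^*$-isomorphism $\Falg \cong \Falg'$.

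The main obstacle I expect is not any of these steps individually but making the functoriality precise: one needs that the $K$-theoretic data extracted in Section \ref{Sec:CompKth} was genuinely built only out of $\CatInd$, the tensor product, and the element $\brep$, with \emph{no} residual dependence on $\CQG$ itself --- e.g. checking that the Bratteli diagram of $\InvJ^\brep$, the connecting maps $\varphi_\ell$, and the correspondence $E$ are all reconstructible from fusion data alone, so that $\Phi$ really does induce compatible isomorphisms on the whole six-term exact sequence \eqref{Eqn:6Terms}. Once that is granted, the identification of the unit class and the appeal to Kirchberg--Phillips are routine.
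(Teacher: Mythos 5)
Your proposal is correct and follows essentially the same route as the paper, whose proof is simply the one-line combination of Theorem \ref{Thm:ClassKirchberg}, Proposition \ref{Prop:DualInvA} and Theorem \ref{Thm:KthFalg}; you have merely spelled out the details (transport of Conditions \ref{Cond:Contra}--\ref{Cond:Entrelac} under $\Phi$, compatibility of $[E]_*$, and preservation of the unit class) that the paper leaves implicit.
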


\begin{proof}
The result is immediate by combining Theorem \ref{Thm:ClassKirchberg}, Proposition \ref{Prop:DualInvA} and Theorem \ref{Thm:KthFalg}.
\end{proof}

\section{Discussion and examples}
\label{Sec:Discussion}

We comment on our Conditions \ref{Cond:Contra}, \ref{Cond:Dim} and \ref{Cond:Entrelac}.
\begin{itemize}
\item
The first condition is non-trivial. Indeed some CQG do not satisfy Condition \ref{Cond:Contra}: consider $\CQG = U(1)$ and its representation $\mu \colon  z \mapsto z$. Clearly, in $T_\mu$ we get all the representations $z \mapsto z^n$ for $n >1$ but we do not recover the representation $z \mapsto z^{-1}$ which would satisfy \ref{Cond:Contra}.
\item
Hypothesis \ref{Cond:Dim} is a somewhat mild condition on $\brep $. Indeed, it is only violated when $\brep $ is a single irreducible representation.
\end{itemize}
Unfortunately, in view of \cite{CuntzAlgDR,EndomDR}, the most interesting cases are the natural representations of $SU(N)$, which are irreducible. In the following section, we will see how to bypass the problem and apply our theory to natural representations of $\CQG = SU(N)$. We will see that in this context, condition \ref{Cond:Entrelac} is also satisfied.

\medbreak

A crucial tool in applying our theory is the notion of \emph{chain group} that we adapt from \cite{HilbertCSystBaumgLledo} (see Theorem 5.5), where it was studied for (ordinary) compact groups:
\begin{definition}
The \emph{chain group} $\ChainGp(\CQG )$ is defined as the set of equivalence classes $[t]$ of irreducible representations of $\CQG $ under the relation $\sim$, where $(t)\sim (t')$ if and only if we can find irreducible representations $(\tau_1), \ldots , (\tau_n)$ s.t. $(t)$ and $(t')$ appear in the decomposition of the tensor product $(\tau_1) \otimes \cdots \otimes (\tau_n)$ into irreducible components. The tensor product induces a group structure on this set.
\end{definition}
The proofs of the original article adapt seamlessly to the case of CQG. The only difference is that in general $\ChainGp(\CQG )$ is no longer Abelian. By construction, $\ChainGp(\CQG )$ only depends on the fusion rules of $\CQG $, \textsl{i.e.} on its $R^+$-isomorphism class.

\medbreak

If $\brep $ is a single irreducible representation and \ref{Cond:Contra} is satisfied, then the \emph{semigroup} generated in $\ChainGp(\CQG )$ by $[\brep ]$ is actually a group. This subgroup of $\ChainGp(\CQG )$ is then  \emph{finite} and \emph{Abelian}. In the case of $\CQG = U(1)$, this property fails. 

However, we can find two large classes on which \ref{Cond:Contra} is satisfied. First we have semisimple Lie groups:
\begin{prop}
\label{Prop:SSLieGp}
If $\CQG $ is a semisimple Lie group (or a $R^+$-deformation thereof), then it satisfies \ref{Cond:Contra} for any irreducible representation $\brep $.
\end{prop}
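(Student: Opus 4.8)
The plan is to reduce Condition \ref{Cond:Contra} to a statement about the existence of a conjugate (contragredient) representation inside the tensor category generated by $\brep$, and then to exploit the structure of semisimple Lie groups — that every irreducible representation has a dual which is itself a subrepresentation of a high enough tensor power of the defining representations, or more directly, that for a semisimple Lie group the fusion semiring has no ``positivity obstruction'' of the $U(1)$-type. Concretely, recall that for any irreducible $\beta$ of a compact group, $\beta \otimes \overline{\beta}$ contains the trivial representation (the invariant vector being the ``identity'' intertwiner). So the real content of \ref{Cond:Contra} is: for each $\beta \in \CatInd$, the conjugate $\overline{\beta}$ must \emph{also} lie in $\CatInd$, i.e. must appear in some iterated tensor power $\brep^q$.

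First I would reduce to the case where $\brep$ is a single irreducible $(t)$: since $\CatInd$ for a general $\brep$ contains $\CatInd$ for each irreducible summand of $\brep$, and since $\CatInd$ only grows under taking tensor powers, it suffices to show that for any irreducible $(t)$, all of $\overline{(t)}$ — equivalently, a full set of representations generating the ``missing'' directions — already appears inside the tensor powers of $(t)$. Next, I would invoke the key Lie-theoretic fact: for a semisimple compact Lie group $G$ (or $R^+$-deformation thereof, where the fusion rules are identical by \cite{FusionRulesBanica}), the center $Z(G)$ is finite, and the image of $[t]$ in the chain group $\ChainGp(\CQG)$ generates a finite subgroup — this is precisely the remark made in the excerpt just before the Proposition. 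Since a finite semigroup inside a group is a group, some power $[t]^m$ equals the identity of $\ChainGp(\CQG)$; unwinding the definition of $\sim$, this means the trivial representation appears in an iterated tensor product of copies of $(t)$, which places the conjugate of any representation in that product (in particular of $(t)$ itself, by peeling off one factor) inside $\CatInd$. Iterating, one gets that $\overline{\beta} \in \CatInd$ for every $\beta \in \CatInd$, which — combined with the observation that $\beta \otimes \overline\beta \supseteq \TrivRep$ — is exactly \ref{Cond:Contra}.

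Slightly more carefully: given $\beta \in \CatInd$, so $\beta \leqslant (t)^{\otimes \ell}$ for some $\ell$. Because $[t]$ has finite order $m$ in $\ChainGp(\CQG)$, the representation $(t)^{\otimes km}$ contains the trivial representation for suitable $k$ (a standard consequence: the chain-group class tracks exactly the obstruction to containing $\TrivRep$ in a tensor power, for semisimple groups this is the only obstruction by complete reducibility and Frobenius reciprocity). Then $(t)^{\otimes km}$ contains $\TrivRep \leqslant \beta' \otimes \overline{\beta'}$ where $\beta'$ ranges over its summands; choosing the decomposition compatibly with the factorisation $(t)^{\otimes \ell} \otimes (t)^{\otimes (km - \ell)}$ shows $\overline{\beta}$ appears in $(t)^{\otimes(km-\ell)}$, hence $\overline\beta \in \CatInd$. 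Taking $\beta' = \overline\beta$ in \ref{Cond:Contra}, the pair $\beta \otimes \overline\beta$ has the required invariant vector.

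The main obstacle is establishing rigorously that for a semisimple compact Lie group the \emph{only} obstruction to a tensor power of an irreducible containing the trivial representation is the chain-group (equivalently, central-character) obstruction — that is, that the semigroup generated by $[t]$ in $\ChainGp(\CQG)$ really is finite and really does control containment of $\TrivRep$. For genuine compact Lie groups this follows from the finiteness of the center together with the surjectivity of the central-character map onto $\widehat{Z(G)}$ and Peter–Weyl/complete reducibility; the remark preceding the proposition already asserts the finiteness of this semigroup, so the proof mainly needs to cite \cite{HilbertCSystBaumgLledo} and the structure theory of semisimple groups, then transport everything to $R^+$-deformations via the fusion-rule isomorphism of \cite{FusionRulesBanica}. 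One should be slightly careful that the chain group and these finiteness statements are genuinely invariants of the fusion semiring, so that the $R^+$-deformation case is automatic once the classical case is done.
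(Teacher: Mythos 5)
Your reduction of Condition \ref{Cond:Contra} to ``$\CatInd$ is closed under conjugation'' is fine, but the central step of your argument is exactly the point you yourself flag as ``the main obstacle'', and it is not established. You need: if $[t]$ has finite order $m$ in $\ChainGp(\CQG )$, then $\TrivRep$ actually occurs in some tensor power $(t)^{\otimes km}$. The chain-group relation only says that an irreducible component $\sigma$ of $(t)^{\otimes m}$ and $\TrivRep$ occur in a \emph{common} tensor product of irreducibles; so $[t]^m = e$ does not by itself produce a copy of $\TrivRep$ inside $(t)^{\otimes km}$, and closing this gap amounts to proving that for semisimple groups the central-character obstruction is the \emph{only} obstruction --- which is essentially the statement to be proved. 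Worse, the finiteness you invoke is attributed to the remark preceding the proposition, but that remark runs in the opposite direction: it deduces that the semigroup generated by $[\brep ]$ is a (finite) group \emph{from} Condition \ref{Cond:Contra}. Citing it here is circular. (The identification $\ChainGp(\CQG ) \simeq \widehat{Z(\CQG )}$ of Remark \ref{Rem:IddCG} is likewise a consequence, in this paper, of results you would have to import and supplement independently.)

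The paper avoids all of this with a two-line argument: for an irreducible $\beta$ acting on $\Hh$ with $n = \dim \Hh$, the top exterior power $\bigwedge^n \Hh$ is a one-dimensional subrepresentation of $\beta^{\otimes n}$, and a semisimple Lie group admits only one one-dimensional representation (its abelianization is trivial), so $\TrivRep \leqslant \beta^{\otimes n}$. Peeling off one factor then yields an irreducible $\beta'$ inside $\beta^{\otimes (n-1)} \leqslant \brep ^{q}$ with $\beta \otimes \beta'$ containing a nonzero invariant vector, and the $R^+$-deformation case is automatic because the whole argument is phrased in terms of fusion rules. If you want to keep the chain-group route, you must prove the ``only obstruction'' claim from scratch --- at which point the exterior-power argument is both shorter and self-contained.
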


\begin{remarque}
\label{Rem:IddCG}
Under the hypothesis of the proposition, the chain group can be identified with the character group of the center of $\CQG $. The identification is defined by the restriction of the irreducible representation to the center of $\CQG $ (see \cite{HilbertCSystBaumgLledo}, (5.2) and Theorem 5.5).
\end{remarque}

\begin{proof}
Without loss of generality, we assume that $\CQG $ is a semisimple Lie group. We are actually going to prove that for any irreducible representation $\beta$ of $\CQG $, the tensor product $\beta^{\otimes n}$ contains the trivial representation $\TrivRep$. Since $\CQG $ is a semisimple Lie group, it has a unique dimension $1$ representation. If $\beta$ acts on $\Hh$, $\beta^{\otimes n}$ induces a representation on $\bigwedge^n \Hh \simeq \C$. This concludes the proof.
\end{proof}

The second large class which satisfies Condition \ref{Cond:Contra} is finite groups\footnote{We thank J.-F. Planchat for drawing our attention to this fact.}:
\begin{prop}
\label{Prop:FiniteGp}
If $\CQG $ is a finite group (or a $R^+$-deformation thereof), then it satisfies \ref{Cond:Contra} for any irreducible representation $\brep $.
\end{prop}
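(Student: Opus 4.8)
The statement to prove is Proposition \ref{Prop:FiniteGp}: if $\CQG$ is a finite group (or an $R^+$-deformation thereof), then it satisfies Condition \ref{Cond:Contra} for any irreducible representation $\brep$. Since Condition \ref{Cond:Contra} depends only on the fusion rules (it is phrased entirely in terms of decompositions of tensor products and the presence of invariant vectors, both of which are fusion-rule data), it suffices to treat the case of an honest finite group $\CQG$; the $R^+$-deformation case follows immediately. The plan is to mimic the structure of the proof of Proposition \ref{Prop:SSLieGp}: reduce the statement to showing that for every irreducible $\beta$ appearing in some iterated tensor power of $\brep$, there exists $\beta'$ in $\CatInd$ with $\beta \otimes \beta'$ containing the trivial representation $\TrivRep$, and then exhibit such a $\beta'$ explicitly as a high tensor power of $\beta$ itself.

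First I would recall that for a finite group, the contragredient (dual) representation $\overline{\beta}$ always satisfies that $\beta \otimes \overline{\beta}$ contains $\TrivRep$ (the invariant vector being the canonical element $\sum_i e_i \otimes e_i^*$, equivalently the identity intertwiner). So the only real content is to verify that $\overline{\beta}$ — or some representation equivalent to it for the purposes of Condition \ref{Cond:Contra} — actually lies in $\CatInd$, i.e. appears in some $\brep^{\otimes \ell}$. The key finiteness input: the representation ring of a finite group is a finitely generated abelian group, and more concretely, for any irreducible $\beta$ the sequence of tensor powers $\beta, \beta^{\otimes 2}, \beta^{\otimes 3}, \ldots$ can only produce finitely many distinct irreducible constituents, since there are only finitely many irreducibles of $\CQG$ in total. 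Second, I would use a pigeonhole/periodicity argument: because the (multi)set of irreducible constituents of $\beta^{\otimes n}$ is drawn from a finite pool, and because $\beta \otimes (-)$ is an additive endofunctor on a finite-rank lattice, the "support" of $\beta^{\otimes n}$ is eventually periodic; in particular one shows that for some $m \geq 1$ the trivial representation $\TrivRep$ occurs in $\beta^{\otimes m}$. The cleanest way: pick any group element argument or character argument — for a finite group, $\langle \chi_\beta^m, 1\rangle = \frac{1}{|\CQG|}\sum_{g} \chi_\beta(g)^m$, and since $\chi_\beta(e) = \dim\beta \geq 1$, one can take $m = |\CQG|$ or invoke that the cyclic subgroup semigroup generated by $[\beta]$ in the chain group $\ChainGp(\CQG)$ is a finite group (as already noted in the discussion preceding the proposition), hence contains the identity, which pulls back to $\TrivRep$ appearing in some $\beta^{\otimes m}$.

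Concretely, here is the route I would write out. Given $\beta \in \CatInd$, say $\beta$ appears in $\brep^{\otimes p}$. Since $\ChainGp(\CQG)$ is a finite group (for a finite group $\CQG$ the chain group is a quotient of $\CQG^{\mathrm{ab}}$, certainly finite), the class $[\beta]$ has finite order $k$ in $\ChainGp(\CQG)$, so $[\beta]^k = [\TrivRep]$, which by definition of $\sim$ means $\TrivRep$ appears in $\beta^{\otimes k} \otimes (\text{something})$ — more carefully, one uses that $\beta^{\otimes k}$ decomposes into irreducibles all lying in the class $[\beta]^k = [\TrivRep]$, i.e. the trivial class, and among them (by a dimension or character count) the genuine trivial representation $\TrivRep$ must occur with positive multiplicity. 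Then set $\beta' := \beta^{\otimes (k-1)}$, which lies in $\CatInd$ because $\beta \in \CatInd$ and $\CatInd$ is closed under tensor products of its members (as noted in the text: constituents of tensor products of elements of $\CatInd$ are again in $\CatInd$). Then $\beta \otimes \beta' = \beta^{\otimes k}$ contains $\TrivRep$, a nonzero invariant vector, which is exactly Condition \ref{Cond:Contra}.

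\textbf{Main obstacle.} The delicate point is the claim that $\beta^{\otimes k}$ actually contains the \emph{trivial} representation $\TrivRep$ (and not merely some one-dimensional representation in the trivial chain-group class, or some higher-dimensional rep equivalent to the class of $\TrivRep$). For a finite group this is handled by the character inner product $\langle \chi_{\beta}^{\otimes k}, 1 \rangle = \frac{1}{|\CQG|} \sum_{g \in \CQG} \chi_\beta(g)^k$; one needs to choose $k$ so that this is strictly positive. Taking $k$ a multiple of $|\CQG|$ works cleanly: then $\chi_\beta(g)^k$ for $g$ of order $d \mid |\CQG|$ is a sum of $d$-th roots of unity each raised to the $k$-th power, equal to $(\dim V_g)$-type nonnegative contributions — more carefully, one diagonalizes $\beta(g)$, whose eigenvalues are $|g|$-th roots of unity, so $\chi_\beta(g)^k = (\sum_i \lambda_i)^k$; this is not obviously nonnegative termwise, so the honest argument is: the eigenvalues of $\beta(g)$ all become $1$ after raising to the $|\CQG|$-th power, so $\beta(g)^{|\CQG|} = \id$, meaning $\beta^{\otimes |\CQG|}$ restricted to the cyclic group $\langle g\rangle$ is a multiple of the regular-type... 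Actually the slickest fix is the semigroup argument: the sub-semigroup of $\ChainGp(\CQG)$ generated by $[\beta]$ is finite hence a group, so contains the identity $[\TrivRep]$, and one shows directly that $[\beta]^n = [\TrivRep]$ in $\ChainGp$ forces $\TrivRep \leq \beta^{\otimes n}$ by induction on the structure of the equivalence relation $\sim$ — this is where I would spend the most care, and it may be cleanest to phrase the whole proof in terms of $\ChainGp(\CQG)$ directly, paralleling Remark \ref{Rem:IddCG}, rather than through explicit characters.
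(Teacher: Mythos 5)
Your overall strategy --- reduce to showing $\TrivRep \leqslant \beta^{\otimes n}$ for some $n \geqslant 1$ and then take $\beta'$ among the irreducible components of $\beta^{\otimes (n-1)}$ --- is the same as the paper's, and your reduction to an honest finite group via fusion rules is fine. The problem is the step you yourself flag as delicate: it is the whole content of the proposition, and your argument does not close it. The implication ``$[\beta]^n = [\TrivRep]$ in $\ChainGp(\CQG )$ forces $\TrivRep \leqslant \beta^{\otimes n}$'' is false. Take $\CQG = S_3$ and $\beta = \mathrm{sgn}$: since the two-dimensional irreducible $\sigma$ satisfies $\sigma \otimes \sigma \simeq \TrivRep \oplus \mathrm{sgn} \oplus \sigma$, all three irreducibles of $S_3$ lie in one chain class, so $\ChainGp(S_3)$ is trivial, $[\mathrm{sgn}] = [\TrivRep]$ has order $n = 1$, and yet $\mathrm{sgn}^{\otimes 1}$ does not contain $\TrivRep$. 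The chain group is too coarse an invariant of the fusion rules to detect which tensor power of $\beta$ contains the trivial representation, so no ``induction on the structure of $\sim$'' can supply this step. Your character-theoretic fallback is abandoned at exactly the same point: you note that $\sum_g \chi_\beta(g)^k$ is not termwise nonnegative, and the remark that $\beta(g)^{|\CQG |} = \id$ concerns $\beta(g^{|\CQG |})$, not the eigenvalues of $\beta(g)^{\otimes |\CQG |}$, so it does not rescue the estimate.

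The paper closes the gap with an elementary device you do not use: if $\beta$ acts on a $d$-dimensional space $\Hh$, the top exterior power $\bigwedge^d \beta$ is a one-dimensional subrepresentation of $\beta^{\otimes d}$, and a one-dimensional representation of a finite group of order $N$ satisfies $\big(\bigwedge^d \beta(g)\big)^{N} = \bigwedge^d \beta(g^N) = 1$, so $\big(\bigwedge^d \beta\big)^{\otimes N} = \TrivRep$ and hence $\TrivRep \leqslant \beta^{\otimes dN}$. (A Burnside--Brauer style character estimate, taking $k$ a sufficiently large multiple of the exponent of $\CQG $ and separating the elements on which $\beta$ is scalar, would also work, but it needs to be written out.) If you replace your chain-group step by this determinant argument, the rest of your write-up goes through.
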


\begin{proof}
The proof is similar to that of Proposition \ref{Prop:SSLieGp}: we start from any irreducible representation $\beta$ and prove that some iterated tensor product $\beta^{\otimes n}$ contains the trivial representation. 

\smallbreak

If $\beta$ acts on a $d$-dimensional Hilbert space $\Hh$, then it induces a dimension $1$ representation on $\bigwedge^n \Hh$ -- which is included in $\beta^{\otimes n}$. Since this representation $\bigwedge^n \beta$ has dimension $1$, for any $g \in \CQG $, we have:
$$ (\wedge^n \beta(g))^{\otimes k} = \wedge^n \beta(g^k) .$$
If we take $k = N$, the order of the group $\CQG $, then $(\bigwedge^n \beta)^{\otimes N}$ is the trivial representation. The result follows.
\end{proof}

If all irreducible representations in $\brep$ have the same class $[\brep ]$ in $\ChainGp(\CQG )$, we shall see in the next subsection that $[\brep ] \neq e$ is an obstruction to Condition \ref{Cond:Entrelac}. The following proposition enables us to get around this difficulty:
\begin{prop}
\label{Prop:ResSUN}
Let $\brep$ be a representation of a CQG $\CQG$. 
\begin{enumerate}[label=(\roman*)]
\item
For any $M \geqslant 1$, there is an injection $\Falg[\brep ^M] \to \Falg$.
\item
If all irreducible representations in $\brep$ have the same nontrivial class in $\ChainGp(\CQG )$, which we denote $[\brep ] \neq e$, and $M$ is the order of $[\brep ]$ inside $\ChainGp(\CQG )$, then $\Falg[\brep ^M] \to \Falg$ is surjective.
\end{enumerate} 
\end{prop}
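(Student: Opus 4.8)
The plan is to build the comparison map $\Falg[\brep^M]\to\Falg$ concretely at the level of Cuntz algebras and then transfer the surjectivity question to the combinatorics of the chain group. First I would observe that the Cuntz algebra $\CAlg_{d^M}$ generated by the degree-$M$ words $S_{i_1}\cdots S_{i_M}$ embeds naturally inside $\CAlg_d$: the $d^M$ elements $\{S_{i_1}\cdots S_{i_M}\}$ satisfy the Cuntz relations (using $\sum_j S_jS_j^*=1$ iterated $M$ times), so by universality there is an injective $*$-homomorphism $\iota\colon\CAlg_{d^M}\inj\CAlg_d$. Under this embedding, the canonical action of $\CQG$ on $\CAlg_{d^M}$ associated to the representation $\brep^M$ (which by the Theorem of \cite{ActCMQGKNW} sends a generator to $\sum S_{j_1}\cdots S_{j_M}\otimes(\brep^M)_{\vec\jmath\vec\imath}$, and $(\brep^M)_{\vec\jmath\vec\imath}$ is precisely a product $\brep_{j_1 i_1}\cdots\brep_{j_M i_M}$) is intertwined with $\actionG$ restricted to $\iota(\CAlg_{d^M})$. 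Hence $\iota$ restricts to an injection $\Falg[\brep^M]\to\Falg$, giving part (i). Equivalently, one can phrase this via the canonical endomorphism: $\iota(\CAlg_{d^M})$ is the fixed algebra of the $\Z/M$-gauge subaction $z\mapsto\gamma_{\zeta}$ with $\zeta=e^{2\pi i/M}$, and $\actionG$ commutes with $\gamma$.

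For part (ii) I would use the gauge action to reduce to homogeneous elements. Since algebraic elements are dense in $\Falg$ (Lemma \ref{Lem:AlgApprox}) and each decomposes into a finite sum of gauge-homogeneous pieces $T_n\in(\Falg)^{(n)}$, it suffices to show every such $T_n$ lies in $\iota(\Falg[\brep^M])$, i.e.\ that $n$ is necessarily a multiple of $M$. Here is where the hypothesis enters: an element $T_n\in(\Falg)^{(n)}$ with $n\geqslant 0$ lies in some $\Hh^{\ell+n}(\Hh^*)^{\ell}$ and, by Lemma \ref{Lem:InterpEntrelac}, is an intertwiner of $\brep^\ell$ and $\brep^{\ell+n}$; if it is nonzero then $\brep^\ell$ and $\brep^{\ell+n}$ share an irreducible subrepresentation. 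But every irreducible occurring in $\brep^k$ has class $k[\brep]$ in $\ChainGp(\CQG)$ — this follows because the class function $[\cdot]$ is multiplicative and additive over the fusion rules, and all irreducibles in $\brep$ have the single class $[\brep]$. Therefore $\ell[\brep]=(\ell+n)[\brep]$ in $\ChainGp(\CQG)$, i.e.\ $n[\brep]=e$, so $M\mid n$ since $M$ is the order of $[\brep]$. (The case $n<0$ is handled by taking adjoints.) Consequently $T_n=\iota(T'_n)$ for a homogeneous $T'_n\in\CAlg_{d^M}$ of gauge degree $n/M$, and $T'_n$ is $\brep^M$-fixed because $T_n$ is $\brep$-fixed and $\iota$ is equivariant; thus $T_n\in\iota(\Falg[\brep^M])$.

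Putting these together: the image $\iota(\Falg[\brep^M])$ is a closed subalgebra of $\Falg$ containing all algebraic elements of $\Falg$, which are dense, so $\iota(\Falg[\brep^M])=\Falg$ and the map is surjective. The main obstacle I anticipate is the bookkeeping in the second paragraph — precisely identifying which gauge-homogeneous subspaces of $\CAlg_d$ land in the subalgebra $\iota(\CAlg_{d^M})$ and checking that the $\CQG$-equivariance of $\iota$ really does exchange the $\brep^M$-coaction with the restriction of $\actionG$. Once the identity $[\brep^k\text{-irreducibles}]=k[\brep]$ in $\ChainGp(\CQG)$ is set up cleanly, the rest is a density argument. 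One should also note that nothing here requires $\ChainGp(\CQG)$ to be abelian, only that $[\brep]$ generates a finite cyclic subgroup, which is automatic once $[\brep]$ has finite order $M$.
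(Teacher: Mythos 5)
Your proposal is correct and follows essentially the same route as the paper: the length-$M$ word embedding $\CAlg_{d^M}\inj\CAlg_d$, reduction to gauge-homogeneous algebraic elements, and the chain-group computation $[\brep]^{k_0}=[\brep]^{k_1}$ forcing $M$ to divide the total gauge. The one piece of ``bookkeeping'' you flag --- identifying which homogeneous subspaces land in $\iota(\CAlg_{d^M})$ --- is closed in the paper exactly as you anticipate, by iterating the inclusion $\Hh^{k_0}(\Hh^*)^{k_1}\inj\Hh^{k_0+1}(\Hh^*)^{k_1+1}$ until both exponents are multiples of $M$.
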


\begin{proof}
Let us prove the injectivity: denote $d \in \N$ the dimension of the Hilbert space $\Hh$ underlying the representation $\brep$. By definition, $\Falg  \subseteq \CAlg_d$ and $\Falg[\brep^M] \subseteq \CAlg_{d^M}$. There is a $C^*$-algebra morphism $\CAlg_{d^M} \to \CAlg_d$ defined on generators by:
$$ S_{K} \to S_{k_0} S_{k_1} S_{k_2} \cdots S_{k_{M-1}}$$
where $K \in \{1, 2, \ldots , d^M\}$ and $k_0, k_1, \ldots , k_{M-1}\in \{ 1,2, \ldots , d\}$ are uniquely determined by $K = k_0 + k_1 d + k_2 d^2 + \cdots + k_{M-1} d^{M-1}$. Indeed, using the properties of the generators of $\CAlg_d$:
$$
(S_{k_0} S_{k_1} S_{k_2} \cdots S_{k_{M-1}})^* S_{k_0'} S_{k_1'} S_{k_2'} \cdots S_{k_{M-1}'}= \delta_{k_0, k_0'} \delta_{k_1, k_1'}\delta_{k_2, k_2'}\cdots \delta_{k_{M-1}, k_{M-1}'} 1
$$
and 
$$
\sum_{k_0, k_1, k_2, \ldots , k_{M-1}} S_{k_0} S_{k_1} S_{k_2} \cdots S_{k_{M-1}} (S_{k_0} S_{k_1} S_{k_2} \cdots S_{k_{M-1}})^*= 1.
$$
The universal property of $\CAlg_{d^M}$ then yields an injective morphism $\CAlg_{d^M} \to \CAlg_d$. Next step, we restrict and corestrict to a morphism $\Falg[\brep^M] \to \Falg $.

To prove that the range of $\Falg[\brep^M] \to \CAlg_d$ is included in $\Falg $, we rely on Lemma \ref{Lem:AlgApprox}. It thus suffices to prove that the image of any algebraic element $T \in \Falg[\brep^M]$ is in $\Falg $. Without loss of generality, we can assume that $T$ is gauge-homogeneous of total gauge $k$. We can then find integers $k_0, k_1$ s.t.
$$ T \in \Hh_M^{k_0} (\Hh_M^*)^{k_1} $$
where the difference $k_0 - k_1 = k$ is the total gauge of $T$ in $\Falg[\brep^M]$ and $\Hh_M$ is the $M$-fold tensor product (over $\C$) of Hilbert spaces $\Hh_M := \Hh^{\otimes M}$. This is the Hilbert space on which $\brep ^M$ is represented. In this setting, $T$ is in $\Falg[\brep ^M]$ if and only if it intertwines $ \Hh_M^{k_1}$ endowed with $(\brep^M)^{k_1}$ and $ \Hh_M^{k_0}$ with $(\brep^M)^{k_0}$ (Lemma \ref{Lem:InterpEntrelac}). But identifying $(\Hh_M)^{k_i}$ with $\Hh^{M k_i}$ and $(\brep^M)^{k_i}$ with $\brep^{M k_i}$, it is clear that $T$ can be seen as element of $\Falg $ -- thus the induced morphism $\Falg[\brep^M] \to \Falg $ is well defined. Moreover, it is injective since $\CAlg_{d^M} \to \CAlg_d$ is.

\smallbreak

Note that the choice of the morphism $\CAlg_{d^M} \to \CAlg_d$ dictates the explicit form of the identification of $\Hh_M$ with $\Hh^{\otimes M}$.

\medbreak

Further assuming that all irreducible representations in $\brep$ have the same class $[\brep ] \neq e$ in $\ChainGp(\CQG )$ and that the order of $[\brep ]$ is $M$, we prove surjectivity by using the chain group. It suffices to show that any algebraic element $T$ of $\Falg $ which is homogeneous with total gauge $k$ is reached. For such a $T$, we can always find $k_0$ and $k_1$ large enough to have
$$ T \in \Hh^{k_0} (\Hh^*)^{k_1} $$
with $k = k_0 - k_1$, total gauge of $T$. If such a nonzero intertwiner exists, then in particular $[\brep ]^{k_0} = [\brep ]^{k_1}$ \textsl{i.e.} $[\brep ]^{k_0 - k_1} = e$ in $\ChainGp(\CQG )$. By definition of $M$, this is equivalent to $k = k_0 - k_1$ being a multiple of $M$. Upon iterating the inclusion 
$$ \Hh^{k_0} (\Hh^*)^{k_1} \inj \Hh^{k_0+1} (\Hh^*)^{k_1+1}, $$
we can further assume that $k_0$ and thus $k_1$ are multiples of $M$. In this case, it is clear that $T \in \Falg $ can be lifted to $T' \in \Falg[\brep ^M]$.
\end{proof}
A consequence of the previous proposition is (for $N \geqslant 2$):
\begin{prop}
\label{Prop:RedSUN}
If $\CQG = SU(N)$ (or a $R^+$-deformation thereof) and $\brep = \nu$ is the natural representation of $\CQG $ on $\C^N$, then $\Falg$ is both a unital Kirchberg algebra in the UCT class $\UCTclass$ and a Pimsner algebra.
\end{prop}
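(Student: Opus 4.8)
\emph{Proof plan.} The natural representation $\nu$ is a single irreducible representation, so it violates Condition \ref{Cond:Dim}; moreover $[\nu]$ is a nontrivial element of the chain group (see below), which by the discussion preceding Proposition \ref{Prop:ResSUN} is an obstruction to Condition \ref{Cond:Entrelac}. Hence Proposition \ref{Prop:DualInvA} cannot be applied to $\nu$ directly. The plan is to replace $\nu$ by the tensor power $\brep' := \nu^{\otimes N}$, for which both obstructions disappear, to verify Conditions \ref{Cond:Contra}, \ref{Cond:Dim} and \ref{Cond:Entrelac} for $\brep'$, and to transport the conclusion back to $\Falg$ along the isomorphism furnished by Proposition \ref{Prop:ResSUN}.

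First I would compute the chain group. By Remark \ref{Rem:IddCG}, $\ChainGp(SU(N))$ is the character group of the centre $Z(SU(N)) \cong \Z/N$, the class of an irreducible representation being its restriction to the centre; since $\nu$ restricts to a generating character, $\ChainGp(SU(N)) \cong \Z/N$ with $[\nu]$ a generator. As $N \geqslant 2$, this gives $[\nu] \neq e$, and the order of $[\nu]$ is exactly $M = N$. Applying Proposition \ref{Prop:ResSUN}, parts (i) and (ii), to the single irreducible representation $\nu$ then shows that the natural map $\Falg[\nu^{\otimes N}] \to \Falg$ is injective and surjective, i.e. a $*$-isomorphism $\Falg \cong \Falg[\nu^{\otimes N}]$.

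It remains to check the three conditions for $\brep' = \nu^{\otimes N}$; here I would use only the fusion rules of $SU(N)$ (so that the argument applies verbatim to an $R^+$-deformation), in the form of the standard facts that $\bigwedge^k \nu$ sits inside $\nu^{\otimes k}$, that $\bigwedge^N \nu \cong \TrivRep$ (the determinant representation is trivial), and that $\bigwedge^{N-1}\nu \cong \bar\nu$. For Condition \ref{Cond:Entrelac}: since $\TrivRep$ is contained in $\brep' = \nu^{\otimes N}$, the representation $(\brep')^{\otimes n}$ is contained in $(\brep')^{\otimes n}\otimes\brep' = (\brep')^{\otimes(n+1)}$ for every $n$, in particular for $n = 1$. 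For Condition \ref{Cond:Dim}: $\nu^{\otimes N}$ has dimension $N^N > 1$ and contains $\TrivRep$ as a proper summand, so it is not a single irreducible. For Condition \ref{Cond:Contra}: given $\beta \in \CatInd[\nu^{\otimes N}]$, the irreducible $\beta$ occurs in $\nu^{\otimes NK}$ for some $K \in \N$, hence $\bar\beta$ occurs in $\bar\nu^{\otimes NK}$; since $\bar\nu \cong \bigwedge^{N-1}\nu$ occurs in $\nu^{\otimes(N-1)}$, it follows that $\bar\beta$ occurs in $\nu^{\otimes N(N-1)K} = (\nu^{\otimes N})^{\otimes(N-1)K}$, so $\bar\beta \in \CatInd[\nu^{\otimes N}]$; and $\beta\otimes\bar\beta$ contains the trivial representation, so Condition \ref{Cond:Contra} holds for $\brep'$.

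With Conditions \ref{Cond:Contra}, \ref{Cond:Dim} and \ref{Cond:Entrelac} established for $\brep' = \nu^{\otimes N}$, Proposition \ref{Prop:DualInvA} shows that $\Falg[\nu^{\otimes N}]$ is a Kirchberg algebra in the UCT class $\UCTclass$ and a Pimsner algebra; transporting this along the isomorphism $\Falg \cong \Falg[\nu^{\otimes N}]$, and observing that $\Falg$ is unital (from $\actionG(1) = 1 \otimes 1$ one gets $1 \in \Falg$), finishes the proof. The main obstacle is conceptual rather than computational: one has to notice that the irreducibility of $\nu$ defeats both Conditions \ref{Cond:Dim} and \ref{Cond:Entrelac}, and that passing to the $N$-th tensor power --- with $N$ forced on us by the order of $[\nu]$ in $\ChainGp(SU(N)) \cong \Z/N$ --- cures both simultaneously; once this reduction and the chain-group computation are in hand, the verification of \ref{Cond:Contra}, \ref{Cond:Dim} and \ref{Cond:Entrelac} for $\nu^{\otimes N}$ is routine.
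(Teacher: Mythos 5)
Your proposal is correct and follows essentially the same route as the paper: compute $\ChainGp(SU(N))\simeq \Z/N\Z$ with $[\nu]$ a generator, use Proposition \ref{Prop:ResSUN} with $M=N$ to identify $\Falg$ with $\Falg[\nu^N]$, verify Conditions \ref{Cond:Contra}, \ref{Cond:Dim} and \ref{Cond:Entrelac} for $\nu^N$, and conclude by Proposition \ref{Prop:DualInvA}. The only cosmetic difference is that the paper disposes of Condition \ref{Cond:Contra} by citing Proposition \ref{Prop:SSLieGp}, whereas you reprove it via contragredients and exterior powers --- the same underlying mechanism.
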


\begin{remarque}
This proposition complements the explicit description by generators of $\Falg$ provided for this case by Paolucci (see \cite{CoactionCuntzAlgPaolucci}, Lemma 7).
\end{remarque}

\begin{proof}
 Proposition \ref{Prop:SSLieGp} ensures that Condition \ref{Cond:Contra} is satisfied. If $\nu$ is the natural representation of $\CQG = SU(N)$, it is readily checked that $[\nu]$ is a generator of $\ChainGp(\CQG ) \simeq \Z/N \Z$. In particular, we can apply Proposition \ref{Prop:ResSUN} (ii) for $M = N$ and get $\Falg[\nu] \simeq \Falg[\nu^N]$.

By definition of $\CQG = SU(N)$,  the $N$-fold tensor product $\nu^N$ contains the trivial representation, thus proving Condition \ref{Cond:Entrelac}. A simple dimension counting argument proves that $\nu^N$ must contain another irreducible representation, showing that \ref{Cond:Dim} is satisfied for $\nu^N$. The result follows from Proposition \ref{Prop:DualInvA}.
\end{proof}

\subsection{Example: $SU_q(2)$}
\label{SSec:Example}

In this subsection, we perform detailed computations regarding the case of $\CQG = SU_q(2)$, when $\brep$ is the natural representation of $\CQG $ on $\C^2$. This algebra of fixed points was described explicitly in \cite{ActCMQGKNW} and \cite{CpctQGpMarciniak}. Our results complements the previous ones by providing an identification up to $C^*$-isomorphism.

\medbreak

 In the rest of this subsection, we denote the $n+1$-dimensional irreducible representations of $\CQG$ by $(n)$ for $n \in \N$. Hence, $(0)$ is the trivial representation, $(1)$ is the natural representation... We distinguish between odd and even representations, depending on the parity of $(n)$. This distinction corresponds to the chain group $\ChainGp(SU(2)) = \Z / 2 \Z$ (see \cite{HilbertCSystBaumgLledo}, 5.1.1 p. 794). The tensor product of irreducible representations in $SU_q(2)$ is determined by the Clebsch-Gordan formula (\cite{TwistedSU2Woronowicz} Theorem 5.11):
\begin{equation}
\label{Eqn:CG}
(k) \otimes (\ell) = (|k-\ell|) \oplus (|k-\ell| + 2) \oplus \cdots \oplus (k+ \ell).
\end{equation}
%Before considering the particular case $\brep = (1)$, we make a general remark:
%
Parity is ``compatible'' with \eqref{Eqn:CG} and $(1)$ is odd, thus it is clear that Condition \ref{Cond:Entrelac} cannot be satisfied as such for $\brep  = (1)$. However, we have the following:
\begin{remarque}
\label{Rem:ResSUN}
If $\brep $ is a (nonempty) finite sum of odd representations, then it satisfies all conditions of Proposition \ref{Prop:ResSUN} for $M = 2$ and $\Falg \simeq \Falg[\brep ^2]$. Moreover, the Clebsch-Gordan formula shows that for any nontrivial irreducible representation $(n)$, the tensor product $(n)\otimes (n)$ contains both $(0)$ and $(2 n)$, thus Conditions \ref{Cond:Dim} and \ref{Cond:Entrelac} are satisfied for $\brep^2$ and $N = 0$.
\end{remarque}

We now use $\brep = (1)^2 = (0) \oplus (2)$. As an explicit illustration of the method of Section \ref{Sec:CompKth}, we compute the Bratteli diagram corresponding to $\InvJ^{\brep} = \lim_{\to } \InvJ^{\brep , \ell}$. Since $\brep $ only contains even representations, no odd representation appears in $\CatInd $ (see Notation \ref{Not:TBrep}). Thus there is no odd representation in the Bratteli diagram. Using \eqref{Eqn:CG}, we can compute the iterated tensor powers $\brep ^\ell$:
\begin{align*}
\brep ^0 &= (0)
&
\brep ^2 &= 2.(0) \oplus 3.(2) \oplus (4)\\
\brep ^1 &= (0) \oplus (2)
&
\brep ^3 &= 5.(0) \oplus 9.(2) \oplus 5.(4) \oplus (6)
\end{align*}
From the above computations, we deduce as in \eqref{Eqn:InvJEll} that 
\begin{align*}
\InvJ^{\brep , 0} &= \C
&
\InvJ^{\brep , 2} &= M_2(\C) \oplus M_3(\C) \oplus M_4(\C)\\
\InvJ^{\brep , 1} &= M_2(\C) \oplus \C
&
\InvJ^{\brep , 3} &= M_5(\C) \oplus M_9(\C) \oplus M_5(\C) \oplus \C
\end{align*}
Hence, iterated tensor powers and Bratteli diagram correspond to the same diagram:
$$
\xymatrix@=4mm{
 & (0) & (2) & (4) & (6) & (8) \\
\ell = 0 & \bullet^1 \ar[dr] \ar[d] & & &&\\
\ell = 2 & \bullet^1 \ar[dr] \ar[d] & \bullet^1 \ar[dl] \ar[dr] \ar@<0.5ex>[d] \ar@<-0.5ex>[d]&&& \\
\ell = 4 & \bullet^2 \ar[dr] \ar[d] & \bullet^3 \ar[dl] \ar[dr] \ar@<0.5ex>[d] \ar@<-0.5ex>[d] & \bullet^1  \ar[dl] \ar[dr] \ar@<0.5ex>[d] \ar@<-0.5ex>[d] & &\\
\ell = 6 & \bullet^5 \ar[dr] \ar[d] & \bullet^9 \ar[dl] \ar[dr] \ar@<0.5ex>[d] \ar@<-0.5ex>[d] & \bullet^5  \ar[dl] \ar[dr] \ar@<0.5ex>[d] \ar@<-0.5ex>[d] &  \bullet^1  \ar[dl] \ar[dr] \ar@<0.5ex>[d] \ar@<-0.5ex>[d] &\\
\ell = 8 & \bullet^{14} & \bullet^{28} & \bullet^{20}  &  \bullet^7 & \bullet^1
}
% un calcul plus prolongé concernant les coefficients est disponible dans le cahier G11 !
$$
It is well known that the representation ring of $SU(2)$ can be identified with $\Z[t]$ using $(n) \leftrightsquigarrow U_n(t/2)$ where $U_n$ are the Chebyshev polynomials of the second kind. Explicitly
\begin{align*}
(0) & \leftrightsquigarrow 1
&
(1) & \leftrightsquigarrow t
&
(2) & \leftrightsquigarrow t^2 - 1
&
(3) & \leftrightsquigarrow t^3 - 2 t
&
...&
\end{align*}
One can easily check that the set $\CatInd $ actually contains all even representation. Moreover, the polynomials corresponding to even representations are even. Thus, we can change the variable and use $T := t^2$. In particular $(0) \oplus (2) \leftrightsquigarrow T$.

\smallbreak

From Remark \ref{Rem:Fraction}, we see that $K_0(\InvJ^\brep )$ is a submodule of the fractions $\Z(T)$. It is easy to prove that $((0) \oplus (2))^n$ contains all irreducible representations $(2 k)$ for $0 \leqslant k \leqslant n$. Since the polynomial corresponding to $(2 k)$ has degree $k$ in $T$ and leading coefficient $1$, we get:
\begin{lem}
$K_0(\InvJ^\brep )$ is the set of $\frac{P(T)}{T^n}$ where the polynomial $P(T) \in \Z[T]$ has degree at most $n$.
\end{lem}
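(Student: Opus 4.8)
\medskip
\noindent\textit{Proof strategy.} The plan is to specialise the general description of $K_0(\InvJ^\brep)$ in Remark~\ref{Rem:Fraction} to the present situation; this amounts to pinning down the set $\CatInd$ (equivalently $\CatInd^\ell$, the irreducibles occurring in \eqref{Eqn:brepEll}) together with the polynomial attached to each of its members. First I would establish the decomposition of the iterated powers: for every $\ell\in\N$,
$$ \brep^\ell=\big((0)\oplus(2)\big)^\ell=(0)\oplus(2)\oplus(4)\oplus\cdots\oplus(2\ell), $$
each irreducible summand occurring with strictly positive multiplicity. This is a short induction on $\ell$ using the Clebsch--Gordan rule \eqref{Eqn:CG}: writing $\brep^{\ell+1}=\brep^\ell\oplus\big(\brep^\ell\otimes(2)\big)$, the first summand already contains $(0),\dots,(2\ell)$, while $(2\ell)\otimes(2)=(2\ell-2)\oplus(2\ell)\oplus(2\ell+2)$ supplies the new top piece $(2\ell+2)$; parity-compatibility of \eqref{Eqn:CG} forbids any odd or out-of-range summand, and since all multiplicities involved are non-negative nothing can cancel. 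In particular $\CatInd$ is exactly the set of even representations and $\CatInd^\ell=\{(0),(2),\dots,(2\ell)\}$.

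By \eqref{Eqn:InvJEll} this gives $K_0(\InvJ^{\brep,\ell})\simeq\bigoplus_{k=0}^{\ell}\Z\cdot(2k)$, whose elements are the formal sums $\sum_{k=0}^{\ell}n_k(2k)$. Under the identification of the ring of even representations with $\Z[T]$ (with $T=t^2$), the class $(2k)$ corresponds to a polynomial $p_k(T)$ of degree exactly $k$ and leading coefficient $1$, as recalled just before the lemma. The second ingredient is then the elementary fact that a family $p_0,\dots,p_\ell$ of polynomials which is monic of strictly increasing degrees $0,1,\dots,\ell$ forms a $\Z$-basis of the free module $\{Q\in\Z[T]:\deg Q\le\ell\}$: given $Q$ of degree $\le\ell$, subtract the appropriate integer multiple of $p_\ell$ to lower the degree and induct. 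Hence the image of $K_0(\InvJ^{\brep,\ell})$ in $\Z[T]$ is precisely $\{Q\in\Z[T]:\deg Q\le\ell\}$.

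It remains to pass to the limit. The connecting map $\varphi_\ell$ induces on $K_0$ the assignment $\sum n_t(t)\mapsto\sum n_t\big((t)\otimes\brep\big)$, which under the identification above is multiplication by $T$; it is injective, so $K_0(\InvJ^\brep)=\lim_{\to}K_0(\InvJ^{\brep,\ell})$ is the increasing union of the images of the $K_0(\InvJ^{\brep,\ell})$. Realising this limit inside the Laurent ring $\Z[T,T^{-1}]$ by sending the class at level $\ell$ of a $Q$ with $\deg Q\le\ell$ to $Q\,T^{-\ell}$ gives a well-defined injection (compatibility is the identity $Q\,T^{-\ell}=(TQ)\,T^{-(\ell+1)}$, which matches the connecting map), whose range is
$$ \bigcup_{\ell\in\N}\{\,Q\,T^{-\ell}:Q\in\Z[T],\ \deg Q\le\ell\,\}=\Big\{\frac{P(T)}{T^n}:n\in\N,\ P\in\Z[T],\ \deg P\le n\Big\}, $$
which is exactly the asserted set. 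This is of course just Remark~\ref{Rem:Fraction} rewritten once $\CatInd^\ell$ and the degrees of the associated polynomials are known.

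The computations are all routine. The only point I expect to need a little care is the bookkeeping of the last step: matching a $K_0$-class that lives at level $\ell$ with the fraction $Q/T^\ell$, checking that the union over $\ell$ of ``degree $\le\ell$ over $T^\ell$'' collapses to the single clean description, and noting that this description is representative-independent since $P/T^n=(TP)/T^{n+1}$ preserves the bound $\deg\le n$. I anticipate no genuine obstacle beyond this.
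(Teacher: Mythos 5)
Your proposal is correct and follows essentially the same route as the paper: the lemma is obtained by specialising Remark~\ref{Rem:Fraction}, using that $\brep^\ell = ((0)\oplus(2))^\ell$ contains every $(2k)$ for $0\leqslant k\leqslant \ell$ and that $(2k)$ corresponds to a monic polynomial of degree $k$ in $T$, so that level $\ell$ contributes exactly the polynomials of degree at most $\ell$ over $T^\ell$. The paper leaves the Clebsch--Gordan induction and the basis/limit bookkeeping as ``easy to prove''; you have simply written those details out.
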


We can now conclude:
\begin{prop}
When $\CQG = SU_q(2)$ and $\brep = (1)$, the fixed point algebra $\Falg $ is a Kirchberg algebra in the UCT class $\UCTclass$ whose $K$-theory is
\begin{align*}
K_0(\Falg ) &= \Z
&
K_1(\Falg ) &= 0.
\end{align*}
Moreover, $[1_{\Falg} ]_0 = 1$ and therefore $\Falg $ is $C^*$-isomorphic to the infinite Cuntz algebra $\CAlg_\infty $.
\end{prop}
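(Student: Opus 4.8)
The plan is to transfer the problem to the representation $\brep^2 = (1)^2 = (0)\oplus(2)$, run the machinery of Sections~\ref{Sec:SimplePI}--\ref{Sec:Main} on it, compute $K_*$ via Theorem~\ref{Thm:KthFalg}, and conclude with the Kirchberg--Phillips classification. First I would invoke Remark~\ref{Rem:ResSUN}: since $\brep=(1)$ is a nonempty finite sum of odd representations, there is a $C^*$-isomorphism $\Falg\simeq\Falg[\brep^2]$, and $\brep^2=(0)\oplus(2)$ satisfies Conditions~\ref{Cond:Dim} and \ref{Cond:Entrelac} (with $N=0$). Condition~\ref{Cond:Contra} for $\brep^2$ holds by Proposition~\ref{Prop:SSLieGp}, since $SU_q(2)$ is an $R^+$-deformation of the semisimple Lie group $SU(2)$. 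Thus Proposition~\ref{Prop:DualInvA} applies to $\brep^2$, and as $\Falg$ is unital and $C^*$-isomorphic to $\Falg[\brep^2]$, it is a unital Kirchberg algebra in the UCT class $\UCTclass$ (and a Pimsner algebra).

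Next I would compute $K_*(\Falg)$ by running Theorem~\ref{Thm:KthFalg} on $\brep^2$. The set $\CatInd[\brep^2]$ consists of all even representations, so $\Z[\CatInd[\brep^2]]$ identifies with the polynomial ring $\Z[T]$ ($T=t^2$, with $(0)\oplus(2)$ corresponding to $T$), an integral domain, and $\brep^2\neq\TrivRep$ because $T\neq 1$; hence $K_1(\Falg)=0$. For $K_0$, the Lemma above identifies $K_0(\InvJ^{\brep^2})$ with $\{\,P(T)/T^n:P\in\Z[T],\ \deg P\leqslant n\,\}$, which is precisely the subring $\Z[T^{-1}]\subseteq\Z[T,T^{-1}]$. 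By Remark~\ref{Rem:Fraction} the class realised in $\InvJ^{\brep^2,\ell}$ by a formal sum $\sum n_t(t)$ is the fraction $(\sum n_t(t))/(\brep^2)^{\ell}$; and by the description of $[E]_*$ in the proof of Theorem~\ref{Thm:KthFalg} it is sent to the same formal sum realised at level $\ell+1$, i.e.\ to $(\sum n_t(t))/(\brep^2)^{\ell+1}$. So $[E]_*$ acts on $\Z[T^{-1}]$ as multiplication by $T^{-1}$, $\id-[E]_*$ is multiplication by $1-T^{-1}$, and
$$ K_0(\Falg)=\coker(\id-[E]_*)=\Z[T^{-1}]\big/(1-T^{-1})\,\Z[T^{-1}]\cong\Z, $$
the isomorphism being evaluation at $T^{-1}=1$. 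Finally the unit $1_{\Falg}$ corresponds to the trivial representation $(0)$ sitting in $\InvJ^{\brep^2,0}$, hence to the fraction $1\in\Z[T^{-1}]$, which maps to $1$ under $\iota_*$; therefore $[1_{\Falg}]_0=1$.

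To conclude I would recall that $\CAlg_\infty$ is a unital Kirchberg algebra in the UCT class with $K_0(\CAlg_\infty)=\Z$ generated by $[1_{\CAlg_\infty}]_0$ and $K_1(\CAlg_\infty)=0$ (Cuntz). Since $[1_{\Falg}]_0$ is a generator of $K_0(\Falg)\cong\Z$, there is an isomorphism $\alpha_0\colon K_0(\Falg)\to K_0(\CAlg_\infty)$ with $\alpha_0([1_{\Falg}]_0)=[1_{\CAlg_\infty}]_0$, together with the trivial isomorphism $\alpha_1$ on $K_1$; Theorem~\ref{Thm:ClassKirchberg} then produces a $C^*$-isomorphism $\Falg\cong\CAlg_\infty$. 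The step I expect to require the most care is the $K_0$ computation --- in particular, making sure $[E]_*$ acts as multiplication by $T^{-1}$ (and not by $T$), which comes down to keeping track of how the connecting maps of the AF inductive system interact with the Kasparov class $[E]$, together with checking that the class of the unit lands on a generator of $\Z$. Everything else is assembly of results already established above.
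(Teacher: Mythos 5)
Your proposal is correct and follows essentially the same route as the paper: reduce to $\brep^2=(0)\oplus(2)$ (Remark \ref{Rem:ResSUN} / Proposition \ref{Prop:RedSUN}), apply Proposition \ref{Prop:DualInvA} and Theorem \ref{Thm:KthFalg}, identify $[E]_*$ as division by $T$ on the fractions $P(T)/T^n$ and compute the cokernel by evaluation at $1$, then invoke Kirchberg--Phillips. The only cosmetic difference is that you phrase the cokernel computation inside the ring $\Z[T^{-1}]$ rather than directly with the fractions, which is the same calculation.
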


\begin{proof}
From Proposition \ref{Prop:RedSUN}, we see that all conditions of Theorem \ref{Eqn:KthFalg} and Proposition \ref{Prop:DualInvA} are satisfied. Thus, we get that $K_1(\Falg ) = 0$ and $K_0(\Falg ) = \coker(1 - [E]_*)$. In our situation, it is readily checked that $ [E]_* \left( \frac{P(T)}{T^n} \right) = \frac{P(T)}{T^{n+1}} $. We are then left to evaluate the cokernel of 
$$
\frac{P(T)}{T^n} \mapsto \frac{P(T) (T - 1)}{T^{n+1}}.
$$ 
It is obvious from this expression that $\frac{Q(T)}{T^n}$ is in the image of $1 - [E]_*$ if and only if $Q(1) = 0$. Thus the cokernel is the image of the evaluation map $Q \mapsto Q(1)$, \textsl{i.e.} $\coker(1 - [E]_*) = \Z$. It is also clear that with this identification, $[1_{\Falg} ]_0 = 1$. Applying Theorem \ref{Thm:ClassKirchberg} and known properties of $\CAlg_\infty$, we get the result.
\end{proof}

\subsection{Example: $SU_q(3)$}

In this subsection, we use the notations and results presented in \cite{TensProdWesslen}. The matrix Lie group $\CQG = SU(3)$ is simply connected and thus its representations correspond precisely to those of $\LieAlgSU(3)$. %[LieGpLieAlgHall] 4.9 ppdf 131
We denote by $(p,q)$ the representation with highest weight $p \lambda_1 + q \lambda_2$ where $\lambda_i$ are the fundamental weights. In particular, the trivial representation ($\mathbf{1}$ in Physics notations) is $(0,0)$, the natural representation ($\mathbf{3}$) is $(1,0)$, its contragredient representation ($\mathbf{\overline{3}}$) is $(0,1)$. It follows from \cite{TensProdWesslen} that:
\begin{remarque}
\label{Rem:SU3}
For any irreducible representation $\beta$ of $\CQG = SU(3)$, the tensor product $\beta^{\otimes 3}$ contains the trivial representation $\TrivRep$.

Therefore, if all irreducible representations in $\brep$ have the same class $[\brep ] \neq e$ in $\ChainGp(\CQG )$, then $[\brep ]$ generates $\ChainGp(\CQG ) \simeq \Z / 3 \Z$ and all hypotheses of Proposition \ref{Prop:ResSUN} are satisfied for $M = 3$. Moreover, Conditions \ref{Cond:Dim} and \ref{Cond:Entrelac} are satisfied for $\brep^3$ and $N = 0$.
\end{remarque}

We now use $\brep = (1,0)^3 = (0,0) \oplus 2.(1,1) \oplus (3,0)$. The representation ring of $\CQG $ is $\Z[\Lambda^1, \Lambda^2]$ where $\Lambda^1$ corresponds to $(1,0)$ and $\Lambda^2$ corresponds to $(0,1)$ (see for instance \cite{BrockertomDieck}, Section 5 p.265). By construction, $\brep ^3$ corresponds to $(\Lambda^1)^3$. Since $(0,1)$ is an irreducible component of $(1,0) \otimes (1,0)$, we have:
\begin{lem}
$K_0(\InvJ^\brep )$ is the set of $\frac{P(\Lambda^1, \Lambda^2)}{(\Lambda^1)^{3n}}$ where the polynomial $P(\Lambda^1, \Lambda^2) \in \Z[\Lambda^1, \Lambda^2]$ includes monomials of degree at most $3 n$, where $\Lambda^1$ counts as degree $1$ and $\Lambda^2$ as degree $2$.
\end{lem}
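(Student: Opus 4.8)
The plan is to identify $K_0(\InvJ^\brep)$ using Remark \ref{Rem:Fraction}, which tells us that this group is the set of fractions $\big(\sum n_t (t)\big)/\brep^\ell$ where each irreducible $(t)$ in the numerator appears with nonzero multiplicity in the decomposition of $\brep^\ell$. Here $\brep = (1,0)^3$, and under the identification of the representation ring $R(\CQG)$ with $\Z[\Lambda^1, \Lambda^2]$, the representation $\brep$ corresponds to $(\Lambda^1)^3$. So what must be pinned down is: which irreducible representations $(p,q)$ occur in $\brep^\ell = (1,0)^{3\ell}$, and what is the precise set of integer linear combinations of the corresponding polynomials that can arise as numerators.

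First I would set up the grading on $\Z[\Lambda^1,\Lambda^2]$ in which $\Lambda^1$ has degree $1$ and $\Lambda^2$ has degree $2$; this is the grading for which the polynomial attached to an irreducible $(p,q)$ is (quasi-)homogeneous of degree equal to the number of boxes in the corresponding Young diagram, i.e.\ $p + 2q$, with leading term $(\Lambda^1)^p(\Lambda^2)^q$. The representation $(1,0)^{3\ell}$ is a sum of irreducibles each having exactly $3\ell$ boxes, hence each contributing a polynomial whose monomials all have degree $\leqslant 3\ell$ (with the top-degree part being the leading monomial). So the numerators that appear are $\Z$-linear combinations of polynomials, each supported in degrees $\leqslant 3\ell$; this already shows $K_0(\InvJ^\brep)$ is contained in the asserted set of $P(\Lambda^1,\Lambda^2)/(\Lambda^1)^{3n}$ with $\deg P \leqslant 3n$.

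For the reverse inclusion I would argue that the polynomials $\{\chi_{(p,q)} : p + 2q \leqslant 3n\}$ attached to the irreducibles occurring in $(1,0)^{3n}$ actually span (over $\Z$) the whole space of polynomials of degree $\leqslant 3n$ in the weighted grading. The key input is that every $(p,q)$ with $p + 2q \leqslant 3n$ and $p + 2q \equiv 0 \pmod 3$ occurs in $(1,0)^{3n}$ — this follows from Remark \ref{Rem:SU3} / the chain-group computation ($[(1,0)]$ generates $\Z/3\Z$, and combined with a positivity/Littlewood--Richardson argument showing enough tensor components appear) much as in the $SU_q(2)$ case where one checks $((0)\oplus(2))^n$ contains all $(2k)$, $k \leqslant n$. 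Since the $\chi_{(p,q)}$ form a triangular system with unit leading coefficients with respect to the monomial basis ordered by weighted degree, triangularity lets one invert and conclude that the $\Z$-span of $\{\chi_{(p,q)} : p+2q \leqslant 3n,\ 3 \mid p+2q\}$ together with the denominators $(\Lambda^1)^{3n}$ recovers exactly all fractions $P/(\Lambda^1)^{3n}$ with $\deg P \leqslant 3n$ — note that multiplying by $(\Lambda^1)^3$ identifies $P/(\Lambda^1)^{3n}$ with $P(\Lambda^1)^3/(\Lambda^1)^{3(n+1)}$, so there is no congruence constraint visible on $P$ itself once fractions with all powers $(\Lambda^1)^{3n}$ are allowed.

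The main obstacle I expect is the combinatorial claim that $(1,0)^{3n}$ contains every irreducible $(p,q)$ with $p + 2q \leqslant 3n$ in the right congruence class: one must show not just that these representations lie in $\CatInd$ (which the chain group handles) but that they already appear at level $3n$ with positive multiplicity, so that the numerator $(\Lambda^1)^{3n}$ suffices. This is where I would lean on the explicit tensor product rules of \cite{TensProdWesslen}, combined with a dimension/degree count (as in the $SU_q(2)$ example) showing that the number of available irreducibles at level $3n$ matches the rank of the degree-$\leqslant 3n$ part of the weighted polynomial ring, which forces surjectivity of the triangular change of basis. Once that is in place, the statement of the lemma follows directly from Remark \ref{Rem:Fraction}.
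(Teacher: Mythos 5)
Your overall route is the one the paper intends (the lemma is in fact stated there without proof, as a direct analogue of the $SU_q(2)$ computation): combine Remark \ref{Rem:Fraction} with an identification of the irreducible components of $(1,0)^{\otimes 3n}$ and a unitriangular change of basis between characters and monomials. The step you single out as the ``main obstacle'' is actually fine: the components of $(1,0)^{\otimes 3n}$ are exactly the $(p,q)$ with $p+2q \leqslant 3n$ and $p+2q \equiv 0 \pmod 3$ (pad the two-row diagram of $(p,q)$ with $(3n-p-2q)/3$ full columns of height $3$ and use Schur--Weyl/Pieri), and since $\chi_{(p,q)} = (\Lambda^1)^p(\Lambda^2)^q + (\text{lower weighted degree})$ with unit leading coefficient, their $\Z$-span is computed by triangularity exactly as you say.

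The genuine gap is the final step, where you assert that multiplying by $(\Lambda^1)^3$ makes the congruence constraint on $P$ disappear. It does not: the representation ring is $\Z/3$-graded by triality, with $\Lambda^1$ in degree $1$ and $\Lambda^2$ in degree $2$; every $\chi_{(p,q)}$ is homogeneous of degree $p+2q \bmod 3$ for this grading, and multiplication by $(\Lambda^1)^3$ preserves it. Hence any numerator $\sum n_t\,\chi_t$ with all $(t)$ occurring in $\brep^n=(1,0)^{\otimes 3n}$ lies in the triality-$0$ component, i.e.\ involves only monomials $(\Lambda^1)^a(\Lambda^2)^b$ with $a+2b\equiv 0 \pmod 3$, and this persists under multiplication by any power of $(\Lambda^1)^3$. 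In particular $\Lambda^1/(\Lambda^1)^3$ does \emph{not} lie in $K_0(\InvJ^\brep)$. What your own triangularity argument proves is that $K_0(\InvJ^\brep)$ is the set of $P/(\Lambda^1)^{3n}$ whose monomials have weighted degree at most $3n$ \emph{and} congruent to $0$ modulo $3$. Note that this extra condition is also absent from the lemma as printed and from the $n=1$ example following it (the $c_1\Lambda^1$ and $c_2(\Lambda^1)^2$ terms should not occur); with the corrected description the cokernel computed in the next proposition is still free abelian of countably infinite rank, so the ultimate identification of $\Falg$ survives, but the intermediate statements need the congruence condition.
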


For instance, for $n = 1$, the possible polynomials are:
$$ \frac{c_0 + c_1 \Lambda^1 + c_{2}(\Lambda^1)^2 + c_3 \Lambda^2 + c_4  (\Lambda^1)^3 + c_5 \Lambda^1 \Lambda^2}{(\Lambda^1)^3} $$
where $c_0, \ldots , c_5 \in \Z$. Let us now prove:
\begin{prop}
When $\CQG = SU_q(3)$ and $\brep = (1,0)$, the fixed point algebra $\Falg $ is a Kirchberg algebra in the UCT class $\UCTclass$ whose $K$-theory is
\begin{align*}
K_0(\Falg ) &= \Z^3 \otimes \Z[\Lambda^2]
&
K_1(\Falg ) &= 0.
\end{align*}
\end{prop}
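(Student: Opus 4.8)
The plan is to follow the same template as the $SU_q(2)$ computation in the previous subsection, now with two generators $\Lambda^1,\Lambda^2$ instead of one. By Remark~\ref{Rem:SU3}, the representation $\brep=(1,0)^3$ satisfies Conditions~\ref{Cond:Contra}, \ref{Cond:Dim} and \ref{Cond:Entrelac}, and $\Falg[(1,0)]\simeq\Falg[\brep]$ via Proposition~\ref{Prop:ResSUN}(ii) with $M=3$; so Proposition~\ref{Prop:DualInvA} applies and $\Falg$ is a unital Kirchberg algebra in $\UCTclass$. It therefore remains only to compute $K_*(\Falg)$ via the short exact sequence \eqref{Eqn:6Terms}, i.e. to determine the kernel and cokernel of $1-[E]_*$ acting on $K_0(\InvJ^\brep)$.

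First I would identify $[E]_*$ explicitly. As in the $SU_q(2)$ case, $[E]_*$ is multiplication by $\brep=(\Lambda^1)^3$, so under the description of $K_0(\InvJ^\brep)$ from the preceding Lemma it sends $P(\Lambda^1,\Lambda^2)/(\Lambda^1)^{3n}$ to $P(\Lambda^1,\Lambda^2)/(\Lambda^1)^{3(n+1)}$. Thus $1-[E]_*$ sends $P/(\Lambda^1)^{3n}$ to $P\cdot\big((\Lambda^1)^3-1\big)/(\Lambda^1)^{3(n+1)}$. Since $\Z[\CatInd]=\Z[\Lambda^1,\Lambda^2]$ is an integral domain and $\brep\neq\TrivRep$, Theorem~\ref{Thm:KthFalg} already gives $K_1(\Falg)=\ker(1-[E]_*)=0$; I would simply invoke that, noting $(\Lambda^1)^3-1$ is not a zero divisor. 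For $K_0(\Falg)=\coker(1-[E]_*)$ I would argue that an element $Q(\Lambda^1,\Lambda^2)/(\Lambda^1)^{3n}$ lies in the image of $1-[E]_*$ if and only if $Q$ is divisible by $(\Lambda^1)^3-1$ in the relevant range of degrees, so that the cokernel is computed by reducing modulo the ideal $\big((\Lambda^1)^3-1\big)$, i.e. by setting $(\Lambda^1)^3=1$. Concretely, $K_0(\Falg)$ is the quotient of $\Z[\Lambda^1,\Lambda^2]\big[\tfrac1{\Lambda^1}\big]$ (with the degree-$\le 3n$ filtration) by $(\Lambda^1)^3-1$; after inverting $\Lambda^1$ and imposing $(\Lambda^1)^3=1$, the powers $1,\Lambda^1,(\Lambda^1)^2$ give a free $\Z[\Lambda^2]$-basis, yielding $\Z^3\otimes\Z[\Lambda^2]$.

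The main obstacle is the bookkeeping of the degree filtration: unlike the one-variable case, one must check that the ``degree at most $3n$'' constraint (with $\Lambda^1$ of degree $1$, $\Lambda^2$ of degree $2$) is compatible with multiplication by $(\Lambda^1)^3-1$ and that no extra relations or torsion appear when passing to the inductive limit — in particular that every class modulo $(\Lambda^1)^3-1$ is represented by a polynomial of controlled degree divided by a sufficiently high power of $\Lambda^1$, and that the only elements killed are genuine multiples of $(\Lambda^1)^3-1$. Once that is verified, one reads off the basis $\{1,\Lambda^1,(\Lambda^1)^2\}$ over $\Z[\Lambda^2]$, obtaining $K_0(\Falg)=\Z^3\otimes\Z[\Lambda^2]$ and $K_1(\Falg)=0$; Proposition~\ref{Prop:DualInvA} and Theorem~\ref{Thm:ClassKirchberg} then finish the proof that $\Falg$ is the asserted Kirchberg algebra.
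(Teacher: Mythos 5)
Your argument is correct and follows essentially the same route as the paper: reduce to $\brep^3$ via Remark \ref{Rem:SU3} and Proposition \ref{Prop:ResSUN}, apply Proposition \ref{Prop:DualInvA} and Theorem \ref{Thm:KthFalg}, identify $[E]_*$ as division by $(\Lambda^1)^{3}$ on the localised representation ring, and compute the cokernel of multiplication by $1-(\Lambda^1)^3$. The only difference is in the last step and is cosmetic: where you quotient directly by the ideal $\big((\Lambda^1)^3-1\big)$ and read off the free $\Z[\Lambda^2]$-basis $\{1,\Lambda^1,(\Lambda^1)^2\}$, the paper embeds the cokernel into $\C^3\otimes\Z[\Lambda^2]$ by evaluating at the three cube roots of unity and checks that the image is free of rank $3$ --- two phrasings of the same computation (the evaluation map is injective on the quotient precisely because those three monomials go to independent vectors), and the degree bookkeeping you rightly flag is likewise left implicit in the paper.
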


\begin{proof}
Proposition \ref{Prop:RedSUN} shows that all conditions of Theorem \ref{Eqn:KthFalg}, Proposition \ref{Prop:DualInvA} and Theorem \ref{Thm:Main} are satisfied. Thus it suffices to treat the case of $\CQG = SU(3)$.

From Theorem \ref{Thm:KthFalg}, we get that $K_1(\Falg ) = 0$ and $K_0(\Falg ) = \coker(1 - [E]_*)$. In our situation, it is readily checked that $ [E]_* \left( \frac{P(\Lambda^1, \Lambda^2)}{(\Lambda^1)^{3n}} \right) = \frac{P(\Lambda^1, \Lambda^2)}{(\Lambda^1)^{3(n+1)}} $. We are then left to evaluate the cokernel of 
$$
\frac{P(\Lambda^1, \Lambda^2)}{(\Lambda^1)^{3n}} \mapsto \frac{P(\Lambda^1, \Lambda^2)(1 - (\Lambda^1)^3)}{(\Lambda^1)^{3(n+1)}}.
$$ 
It is obvious from this expression that $\frac{Q(\Lambda^1, \Lambda^2)}{(\Lambda^1)^{3(n+1)}}$ is in the image of $1 - [E]_*$ if and only if 
\begin{align*}
Q(1, \Lambda^2) &= 0
&
Q(j, \Lambda^2) &= 0
&
Q({j}^2, \Lambda^2) &= 0.
\end{align*}
Thus the cokernel is given by the evaluation map $\Phi \colon \Z[\Lambda^1, \Lambda^2] \to \C^3 \otimes \Z[\Lambda^2]$:
$$
\Phi(Q) = \Big(Q(1, \Lambda^2), Q(j, \Lambda^2), Q(j^2, \Lambda^2) \Big).
$$
Considering $Q(\Lambda^1, \Lambda^2) = (\Lambda^1)^p (\Lambda^2)^q$ for $p = 0, 1, 2$ and $q \in \N$, we see that the image of $\Phi$ is $\langle v_1, v_2, v_3 \rangle \otimes \Z[\Lambda^2]$ where 
\begin{align*}
v_1 &= (1, 1,1)
&
v_2 &= (1, j, j^2)
&
v_3 &= (1, j^2, j).
\end{align*}
These vectors are free on $\R$ and thus on $\Z$. 
\end{proof}

\section{Final remarks}
\label{Sec:Final}

In this last section, we prove further consequences of our results for semisimple compact Lie groups and their $R^+$-deformations. We first recall the following Proposition 9.3 from \cite{RepTheoryHuang} p.125:
\begin{prop}
\label{Prop:Huang}
Let $\beta$ be an irreducible representation of a semisimple compact Lie group $G$. Each irreducible representation of $G$ occurs in some tensor power $\beta^{\otimes k}$ if and only if $\beta$ is faithful. 

In particular, if $G$ is simple, $\beta$ is faithful if and only if the centre $Z(G)$ is faithfully represented on $\beta$.
\end{prop}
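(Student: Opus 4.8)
The plan is to deduce both implications at once from the Stone--Weierstrass and Peter--Weyl theorems applied to a single algebra. Put $n=\dim\beta$, fix an orthonormal basis of the space of $\beta$, let $\beta_{ij}\in C(G)$ be the corresponding matrix coefficients, and let $A\subseteq C(G)$ be the subalgebra they generate. Then $A$ is the linear span of products $\beta_{i_1j_1}\cdots\beta_{i_kj_k}$ over $k\geqslant 0$, i.e.\ the linear span of the matrix coefficients of the representations $\beta^{\otimes k}$, and hence of the matrix coefficients of their irreducible constituents. By Peter--Weyl (matrix coefficients of inequivalent irreducibles are $L^2$-orthogonal, and all of them together are total in $C(G)$), one gets $\overline{A}=C(G)$ \emph{if and only if} every irreducible representation of $G$ occurs in some $\beta^{\otimes k}$: if some irreducible is missed, its matrix coefficients are $L^2$-orthogonal to $A$, so $\overline{A}\neq C(G)$; conversely if none is missed, $A$ contains the span of all matrix coefficients. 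On the other hand $A$ separates the points of $G$ precisely when $\beta$ is faithful (if $\beta(g)=\beta(h)$ every element of $A$, being a polynomial in the $\beta_{ij}$, agrees at $g,h$; faithfulness makes some $\beta_{ij}$ separate any two points). So, once we know that $A$ is a unital $*$-subalgebra of $C(G)$, Stone--Weierstrass gives $\overline{A}=C(G)\iff A$ separates points $\iff\beta$ is faithful, and combining the two displayed equivalences yields the assertion.

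The step that genuinely uses semisimplicity, and the one I would single out as the crux, is showing that $A$ is closed under complex conjugation. The conjugates $\overline{\beta_{ij}}$ are the matrix coefficients of the contragredient $\beta^*$ (for $\beta$ unitary, $\beta^*(g)=\overline{\beta(g)}$). Now $\bigwedge^n\beta$ is a one-dimensional representation of the semisimple group $G$, hence trivial --- this is exactly the reasoning in the proof of Proposition \ref{Prop:SSLieGp}. The natural $G$-equivariant isomorphism $\bigwedge^{n-1}V\cong\bigwedge^n V\otimes V^{*}$, coming from the perfect pairing $(\xi,v)\mapsto\xi\wedge v$, therefore gives $\beta^{*}\cong\bigwedge^{n-1}\beta$, a subrepresentation of $\beta^{\otimes(n-1)}$. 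Consequently every $\overline{\beta_{ij}}$ is a linear combination of products of $n-1$ of the $\beta_{kl}$, so $\overline{\beta_{ij}}\in A$; also $1\in A$ since the trivial representation occurs in $\beta\otimes\beta^{*}\subseteq\beta^{\otimes n}$. Thus $A$ is a unital $*$-subalgebra and the argument of the first paragraph applies. (Alternatively one could invoke the faithfulness form of the theorem of \cite{BrockertomDieck} for general compact groups, which uses tensor powers of $\beta$ \emph{and} of $\beta^{*}$, and then use $\beta^{*}\subseteq\beta^{\otimes(n-1)}$ to remove the dual; I would prefer the self-contained version above.)

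It remains to treat the ``in particular'' statement for $G$ simple and connected. The kernel $\ker\beta$ is a closed normal subgroup of $G$; by simplicity its identity component is $\{e\}$ or $G$, the latter meaning $\beta=\TrivRep$. So, unless $\beta$ is trivial, $\ker\beta$ is discrete, and a discrete normal subgroup of a connected group is central (for each $n\in\ker\beta$ the continuous map $g\mapsto gng^{-1}$ into the discrete set $\ker\beta$ is constant). Hence $\ker\beta\subseteq Z(G)$, so $\ker\beta=\ker(\beta|_{Z(G)})$, and therefore $\beta$ is faithful if and only if $Z(G)$ is faithfully represented on $\beta$ (the case $\beta=\TrivRep$ being consistent, both conditions failing once $G\neq\{e\}$); together with the first part this finishes the proof. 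I expect no real difficulty in this last step; apart from it, the only point demanding care throughout is the bookkeeping identifying $\overline{A}$ with the closed span of matrix coefficients of the constituents of the tensor powers and the $G$-equivariance of the exterior-power identifications, both routine.
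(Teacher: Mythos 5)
The paper does not actually prove this statement: Proposition \ref{Prop:Huang} is quoted as a known result from \cite{RepTheoryHuang} (Prop.~9.3, p.~125) and used as a black box, so there is no in-paper argument to compare yours against. On its own merits your proof is correct and self-contained. The Stone--Weierstrass/Peter--Weyl mechanism is the standard route, and you correctly isolate the one step where semisimplicity enters: for a general compact group one must adjoin the coefficients of the contragredient $\beta^{*}$ to make the generated algebra $*$-closed, and here this is unnecessary because $\bigwedge^{n}\beta=\det\beta$ is trivial, whence $\beta^{*}\cong\bigwedge^{n-1}\beta\subseteq\beta^{\otimes(n-1)}$ --- the same exterior-power device the paper itself uses in Proposition \ref{Prop:SSLieGp}, so your argument meshes well with the paper's toolkit. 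Two small caveats. First, you tacitly assume $G$ connected (a disconnected ``semisimple'' group can have nontrivial one-dimensional representations, breaking the triviality of $\bigwedge^{n}\beta$); this is consistent with the paper's own implicit convention. Second, your parenthetical about $\beta=\TrivRep$ is not quite right: if $G$ is of adjoint type, so $Z(G)=\{e\}$, then $Z(G)$ \emph{is} (vacuously) faithfully represented on the trivial representation while $\beta$ is not faithful, so the ``in particular'' clause really requires $\beta$ nontrivial. That defect is inherited from the statement as quoted and does not affect your main argument, which for nontrivial $\beta$ correctly reduces to ``a discrete normal subgroup of a connected group is central''.
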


The following first appeared as Definition 2.4 in \cite{PpalActionEllwood} (see also \cite{FreeActCAlgCY}):
\begin{definition}
Given a CQG $\CQG $, an action $\actionG \colon A \to A \otimes C(\CQG )$ on a $C^*$-algebra $A$ is called \emph{free} if $ (A \otimes 1) \actionG(A)$ is dense in $A \otimes C(\CQG )$.
\end{definition}

Going back to our initial motivations concerning free actions, we prove:
\begin{prop}
If $\CQG $ is a semisimple compact Lie group (or a $R^+$-de\-for\-ma\-tion thereof) and $\brep $ is (a $R^+$-deformation of) a faithful representation of $\CQG $ on a $d$-dimensional Hilbert space $\Hh$, then the induced action $\actionG$ on $\CAlg_d$ is {free}.
\end{prop}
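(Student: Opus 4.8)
\section*{Proof proposal for the final Proposition}

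The plan is to show that the closed linear span $\mathcal{D} := \overline{\operatorname{span}}\{(X\otimes 1)\,\actionG(T) : X, T \in \CAlg_d\}$ coincides with $\CAlg_d \otimes C(\CQG)$. First I would record the structural properties of $\mathcal{D}$: since $\actionG(1) = 1\otimes 1$ we have $\CAlg_d \otimes 1 \subseteq \mathcal{D}$; since $\actionG$ is multiplicative, $\mathcal{D}\cdot\actionG(\CAlg_d)\subseteq\mathcal{D}$; and trivially $(\CAlg_d\otimes 1)\cdot\mathcal{D}\subseteq\mathcal{D}$, i.e.\ $\mathcal{D}$ is a left $(\CAlg_d\otimes 1)$-module. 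Because $\CAlg_d\otimes C(\CQG)$ is the closed linear span of the elementary tensors $X\otimes c$ and $(X\otimes 1)(1\otimes c) = X\otimes c$, it will suffice to prove that $1\otimes c \in \mathcal{D}$ for every $c$ in a subset of $C(\CQG)$ with dense linear span.

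The key computation is an extraction of matrix coefficients. For multi-indices $(i_1,\dots,i_\ell)$ and $(q_1,\dots,q_\ell)$, using $\actionG(S_{i_1}\cdots S_{i_\ell}) = \sum_{j_1,\dots,j_\ell} S_{j_1}\cdots S_{j_\ell}\otimes \brep_{j_1 i_1}\cdots\brep_{j_\ell i_\ell}$ together with the Cuntz relations $S_a^* S_b = \delta_{ab}1$, a telescoping collapse gives
$$ (S_{q_\ell}^*\cdots S_{q_1}^*\otimes 1)\,\actionG(S_{i_1}\cdots S_{i_\ell}) = 1\otimes \brep_{q_1 i_1}\brep_{q_2 i_2}\cdots\brep_{q_\ell i_\ell}. $$
Hence $1\otimes c \in \mathcal{D}$ for every $c$ in the unital subalgebra $\mathcal{A}\subseteq C(\CQG)$ generated by the matrix coefficients $\brep_{ij}$; observe that $\mathcal{A}$ is precisely the linear span of the matrix coefficients of the tensor powers $\brep^{\otimes k}$, $k\geqslant 0$, since a product of $k$ coefficients of $\brep$ is a coefficient of $\brep^{\otimes k}$ and conversely.

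It remains to prove that $\mathcal{A}$ is dense in $C(\CQG)$. Since $\brep$ is (a $R^+$-deformation of) a \emph{faithful} representation, Proposition \ref{Prop:Huang} — applied on the classical side and transported through the $R^+$-isomorphism, as the property at stake depends only on the fusion semiring $R^+(\CQG)$ and the class of $\brep$ in it — guarantees that every irreducible representation of $\CQG$ occurs as a subrepresentation of some $\brep^{\otimes k}$. The matrix coefficients of a subrepresentation are scalar linear combinations of those of the ambient representation, so $\mathcal{A}$ contains the matrix coefficients of every irreducible representation of $\CQG$; by the Peter--Weyl theorem for compact quantum groups (Woronowicz), these span a dense subspace of $C(\CQG)$, whence $\overline{\mathcal{A}} = C(\CQG)$. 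Combining everything, $\mathcal{D}\supseteq 1\otimes C(\CQG)$; since $\mathcal{D}$ is a closed left $(\CAlg_d\otimes 1)$-module, it contains $(\CAlg_d\otimes 1)(1\otimes C(\CQG))$ and hence its closed linear span $\CAlg_d\otimes C(\CQG)$. Thus $\mathcal{D} = \CAlg_d\otimes C(\CQG)$, i.e.\ $\actionG$ is free. The only delicate point is the passage to $R^+$-deformations in the use of Proposition \ref{Prop:Huang}, but this is essentially formal once one notes that ``every irreducible representation appears in some $\brep^{\otimes k}$'' is invariant under $R^+$-isomorphism; the Cuntz-algebra computation and the Peter--Weyl argument are otherwise routine.
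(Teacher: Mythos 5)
Your proposal is correct and follows essentially the same route as the paper: extract matrix coefficients of tensor powers of $\brep$ inside $(\CAlg_d\otimes 1)\actionG(\CAlg_d)$ via the Cuntz relations, invoke Proposition \ref{Prop:Huang} (transported through the $R^+$-isomorphism) to see every irreducible occurs in some $\brep^{\otimes k}$, and conclude by Woronowicz's Peter--Weyl density. The only cosmetic difference is that the paper first picks an orthonormal family $v_j\in\Hh^{\otimes k}$ carrying a given irreducible $\beta$ and computes $(Tv_i^*\otimes 1)\actionG(v_j)=T\otimes\beta_{ij}$ directly, whereas you obtain the coefficients of $\brep^{\otimes\ell}$ first and then pass to subrepresentations.
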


\begin{remarque}
This applies in particular to $\CQG = SU(N)$ and its natural representation on $\C^N$.
\end{remarque}

\begin{proof}
First assume that $\CQG $ is a semisimple compact Lie group. We can apply Proposition \ref{Prop:Huang} to $\CQG $ and $\brep $. This way, all irreducible representations of $\CQG $ appear in some tensor power $\brep ^{\otimes k}$ for $k$ large enough. This is also true for any $R^+$-deformation of $\CQG $, since they share the same fusion rules. Thus for any irreducible representation defined by a matrix $(\beta_{ij})$, we can find an orthonormal family of vectors $v_j \in \Hh^{\otimes k}$ s.t.
$$ \actionG(v_j) = \sum_{i} v_i \otimes \beta_{ij} .$$
Realising these vectors in $\CAlg_d$, and picking any $T \in \CAlg_d$ we get:
$$ (T v_i^* \otimes 1) \actionG(v_j) = (T v_i^* \otimes 1) \left( \sum_{k} v_k \otimes \beta_{kj} \right) = T \otimes \beta_{ij} .$$
Relying on Theorem 1.2 of \cite{CpctQGpWoronowicz}, it follows that $(\CAlg_d \otimes 1) \actionG(\CAlg_d \otimes 1)$ is dense in the tensor product $A \otimes C(\CQG )$.
\end{proof}

We conclude with the following:
\begin{remarque}
\label{Rem:CPZ}
The stability result of Theorem \ref{Thm:Main} shows that the fixed point $C^*$-algebra is not a very fine invariant: indeed, it only ``sees'' the fusion rules of $\CQG $. Concretely, in the setting of the natural representation $\nu$ of $SU_q(N)$, we cannot retrieve the ``$q$''.

\medbreak

This situation is the exact opposite of Theorem 2 of \cite{QGpActCPZ}. In an algebraic version of the same setting, this result proves that the (algebraic) fixed point algebra characterises the (algebraic) quantum group. In other words, we can retrieve the ``$q$'' from the algebraic relations.
\end{remarque}

	\bibliographystyle{alpha} 
	\bibliography{biblio}

\paragraph{Acknowledgement}

The author is deeply grateful to D. Bahns for her support and encouragement in general, and for our many discussions about this topic and her careful reading of this article in particular. He also thanks J. Renault as well as R. Meyer and S. Albandik for drawing his attention to \cite{TheseWassermann} and \cite{QfreeAutZacharias}.

\medbreak

Mathematisches Institut -- Universität Göttingen

Bunsenstr. 3-5 D - 37073 Göttingen, Germany 

\textsl{E-mail address:} \texttt{ogabriel@uni-math.gwdg.de}.
\end{document}